\documentclass[12pt]{article}

\renewcommand{\baselinestretch}{1.1}
\usepackage[demo]{graphicx} 
\usepackage{hyperref}
\usepackage{float}
\usepackage{indentfirst}
\usepackage{tikz}
\usepackage{amssymb}
\usepackage{amsmath,mathrsfs,amsfonts}
\usepackage{graphics}
\usepackage{amsthm}
\usepackage{setspace}
\usepackage{cases}
\usepackage{setspace}
\usepackage[mathscr]{eucal}
\usepackage{caption}
\usepackage{subcaption}
\usepackage{rotating}
\usetikzlibrary{decorations.pathreplacing}
\newtheorem{theo}{Theorem}
\newtheorem{theorem}{Theorem}
\newtheorem{lem}[theo]{Lemma}
\newtheorem{claim}{Claim}

\newtheorem{conj}[theo]{Conjecture}

\newtheorem{problem}[theo]{Problem}
 \theoremstyle{definition}
\newtheorem{defi}[theo]{Definition}

\theoremstyle{remark}

\newcounter{casenum}[theo]

\newcounter{subcasenum}[theo]

\newcounter{claimnum}[theo]

\allowdisplaybreaks
\renewcommand{\baselinestretch}{1.3}
\begin{document}
\thispagestyle{plain}

\begin{center} {\Large Degree sequences of spectrally extremal connected nonregular graphs with prescribed maximum degree: the dense case
}
\end{center}
\pagestyle{plain}
\begin{center}
	{
		{\small  Zejun Huang, Chenxi Yang \footnote{  Corresponding author. \\ Email: zejunhuang@szu.edu.cn (Huang), yangchenxi2022@email.szu.edu.cn (Yang)}}\\[3mm]
		{\small   School of Mathematical Sciences, Shenzhen University, Shenzhen 518060, China }\\
		
	}
\end{center}
\renewcommand{\baselinestretch}{1.1}
\begin{center}
\begin{minipage}{140mm}

\begin{center}
{\bf Abstract}
\end{center}
{\small Let $G$ be an $n$-vertex connected nonregular graph  with maximum degree $\Delta$ that attains the maximum spectral radius. For $3\le\Delta\le n-2$,
Liu and Li  (2008) conjectured that $G$ has the degree sequence  $(\Delta,\ldots,\Delta,\delta)$ with $\delta<\Delta$ as large as possible.
In this paper, we study the dense case where $\Delta=n-k$ with $k\ge4$ fixed and $n$ sufficiently large. We prove that
\[
d(G)=
\begin{cases}
(\Delta,\ldots,\Delta,  1), & \text{if $n$ is even and $k$ is odd};\\
(\Delta,\ldots,\Delta,\Delta-1), & \text{if $n$ is odd and $k$ is even};\\
(\Delta,\ldots,\Delta,2), & \text{if $n$ and $k$ have the same parity}.
\end{cases}
\]

{\bf Keywords:} nonregular graph;  spectral radius; degree sequence; extremal graph
}

\end{minipage}
\end{center}

\section{Introduction}

Graphs   in this paper are simple. For a graph $G$ of order $n$, we denote its vertex set and edge set by $V(G)$ and $E(G)$, respectively. The cardinalities of $V(G)$ and $E(G)$ are called the {\it order} and the {\it size} of $G$, respectively.  For~a~vertex $u\in V(G)$, the {\it neighborhood} of $u$, denoted $N_G(u)$, is the set of vertices adjacent to $u$. The {\it closed neighborhood} of $u$ is defined as $N_G[u]:=N_G(u)\cup \{u\}$. When no confusion can arise, we write $N(u)$ and $N[u]$ instead of $N_G(u)$ and $N_G[u]$. For $U\subseteq V(G)$ and $v\in V(G)$, let $N_U(v):=N_G(v)\cap U$. For two disjoint vertex subsets $V_1,V_2\subseteq V(G)$, we write $\partial(V_1,V_2)=\{xy\in E(G):x\in V_1,\ y\in V_2\}$.
The degree of a vertex $v$ in $G$ is denoted by $d_G(v)$, or simply  $d(v)$ when the graph $G$ is clear from the context. We denote by $\Delta(G)$, $\delta(G)$ and $\bar d(G)$ the maximum degree, the minimum degree and the average degree of $G$, respectively. The degree sequence of $G$ is the sequence
$d(G)=(d_1,d_2,\ldots,d_n)$, where $d_1\ge d_2\ge \cdots \ge d_n$ are the degrees of the vertices of $G$ arranged in nonincreasing order, counted with multiplicities. We use the notation $(a_1^{m_1},a_2^{m_2},\ldots,a_t^{m_t})$ to indicate that $a_i$ is repeated $m_i$ times for each $i$, and we omit   the exponent when $m_i=1$. The {\it spectral radius} of $G$, denoted $\rho(G)$, is the spectral radius of its adjacency matrix $A(G)$.

It is well known that $\rho(G)\le \Delta(G)$ for all connected graphs $G$, with equality   if and only if $G$ is regular. In 2004  Stevanovi$\rm{\acute{c}}$ \cite{D} asked a natural interesting problem that how far the spectral radius of a graph is from the maximum degree.
Suppose $n\ge3$ and $2\le\Delta\le n-1$.
Let $\mathcal{C}(n,\Delta)$ be the feasible class consisting of all connected nonregular graphs of order $n$ with maximum degree $\Delta$, and define
$$
\lambda_1(n,\Delta):=\max_{H\in\mathcal{C}(n,\Delta)}\rho(H).
$$
Stevanovi$\rm{\acute{c}}$ \cite{D} obtain the first lower bound for $\Delta-\lambda_1(n,\Delta)$, which was improved by a subsequently works; see, for example, \cite{SMC,CGN,FZ,LLL,LHY,LSW,WZ,Zhang} and the references therein.

Beyond estimating the gap $\Delta-\lambda_1(n,\Delta)$, a complementary problem is to determine the degree sequences and exact structures of the extremal graphs.
Let
$$
\mathcal{G}(n,\Delta):=\{H\in\mathcal{C}(n,\Delta):\rho(H)=\lambda_1(n,\Delta)\}
$$
be the family of extremal graphs in $\mathcal{C}(n,\Delta)$.
Liu and Li \cite{LL1} conjectured the following degree-sequence property for the extremal graphs.

\begin{conj}\cite{LL1}\label{conj1}
  Suppose $3\le\Delta\le n-2$ and $G\in\mathcal{G}(n,\Delta)$. Then the degree sequence of $G$ is $(\Delta^{n-1},\delta)$, where
  \begin{equation}
  \nonumber
  \delta=\left\{
  \begin{aligned}
  &\Delta-1,&&\text{if }n\Delta\text{ is odd}; \\
  &\Delta-2,&&\text{if }n\Delta\text{ is even}. \\
  \end{aligned}
  \right.
  \end{equation}
\end{conj}

Liu \cite{LLL} confirmed Conjecture \ref{conj1} for $\Delta\in\{3,4\}$ and determined the exact structures of the corresponding extremal graphs.
In the same paper, Liu proposed the following modification for the case that $\Delta$ is fixed and $n$ tends to infinity.

\begin{conj}\cite{LLL}\label{conj2}
  For each fixed integer $\Delta\ge3$ and all sufficiently large $n$, every graph $G\in\mathcal{G}(n,\Delta)$ has degree sequence $(\Delta^n,\delta)$, where
 \begin{equation*}
  \delta=\left\{
  \begin{aligned}
  &\Delta-1,&&\text{if $\Delta$ and $n$ are both odd};\\
  &1,&&\text{if $\Delta$ is odd and $n$ is even};\\
  &\Delta-2,&&\text{if $\Delta$ is even}.
  \end{aligned}
  \right.
  \end{equation*}
\end{conj}

Conjecture \ref{conj2} concerns the sparse case of Conjecture \ref{conj1} essentially. In this paper,
we consider the  dense case    $\Delta=n-k$ of Conjecture \ref{conj1}, where $k$ is fixed and $n$ is sufficiently large. We formulate our problem as follows.

\begin{problem}
  For each fixed integer $k\ge2$ and all sufficiently large $n$, determine the possible degree sequences of graphs in $\mathcal{G}(n,n-k)$.
\end{problem}

The case $k=1$ is trivial that all graphs in $\mathcal{G}(n,n-1)$ are isomorphic to $K_n-e$ \cite{LL1}.
For $k=2$, equivalently $\Delta=n-2$, Huang et al. \cite{HLY} and Sun \cite{MS} independently characterized the extremal graphs for $n\ge5$.
These structural results imply that each extremal graph $G\in\mathcal{G}(n,n-2)$ has degree sequence
$$
d(G)\in
\begin{cases}
\{(\Delta^{n-1},n-3)\}, & \text{if } n \text{ is odd};\\
\{(\Delta^{n-1},n-4),(\Delta^{n-1},2)\}, & \text{if } n \text{ is even}.
\end{cases}
$$
Moreover, both degree sequences in the second case are attained by extremal graphs when $n$ is even.

For $k=3$, equivalently $\Delta=n-3$, Huang et al. \cite{HLY} characterized the extremal graphs for $n\ge59$, yielding
$$
d(G)=
\begin{cases}
(\Delta^{n-1},1), & \text{if } n \text{ is even};\\
(\Delta^{n-1},2), & \text{if } n \text{ is odd}.
\end{cases}
$$

In this paper, we determine the degree sequences of graphs in $\mathcal{G}(n,n-k)$ for each fixed $k\ge4$ and all sufficiently large $n$.
Our main result is as follows.

\begin{theorem}\label{th1}
Let $k\ge 4$ be a fixed integer. Then for each sufficiently large integer $n$   and $\Delta=n-k$, every graph $G\in \mathcal{G}(n,\Delta)$ has degree sequence
\[
d(G)=
\begin{cases}
(\Delta^{n-1},  1), & \text{if $n$ is even and $k$ is odd};\\
(\Delta^{n-1},\Delta-1), & \text{if $n$ is odd and $k$ is even};\\
(\Delta^{n-1},2), & \text{if $n$ and $k$ have the same parity}.
\end{cases}
\]
\end{theorem}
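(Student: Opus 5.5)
We work in the complement, where the problem becomes nearly linear. Let $\bar G$ be the complement of $G$. Every vertex of $\bar G$ has degree at least $k-1$, and all but a few vertices have degree exactly $k-1$. Write $\rho(G)=\Delta-\varepsilon$. The plan is first to pin down $\varepsilon$ asymptotically and then compare the candidate degree sequences.

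\textbf{Step 1: expansion of $\varepsilon$.} Let $x$ be the Perron vector of $G$, normalized so that $\max x=1$. From $\rho x=Ax$ and the regularity defect $D=\sum_v(\Delta-d(v))$, we get $\varepsilon\sum_v x_v=\sum_v(\Delta-d(v))x_v$. Since $G$ is dense ($\Delta=n-k$), each $x_v$ equals $1-O(1/n)$. Hence
\[
\varepsilon=\frac{D}{n}+\frac{c(G)}{n^{2}}+O(n^{-3}),
\]
where $c(G)$ depends only on the local structure of $\bar G$ around the deficient vertices. To derive this, I would use $x_v=\frac{1}{\rho}\sum_{u\sim v}x_u=\frac{1}{\rho}\bigl(S-x_v-\sum_{u\in N_{\bar G}(v)}x_u\bigr)$, where $S=\sum x$, and iterate once or twice.

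\textbf{Step 2: first-order optimization.} Minimizing $\varepsilon$ means first minimizing $D$. Parity gives $D\ge1$ when $n(n-k)$ is odd, i.e.\ when $n$ is odd and $k$ is even. That case yields $(\Delta^{n-1},\Delta-1)$, provided a connected realization exists, which is easy for large $n$. Otherwise $D\ge2$. A single vertex of degree $\Delta-2$ and two vertices of degree $\Delta-1$ both give $D=2$, and so do sequences that are not of the conjectured shape. Here the theorem says the extremal choice is not $D=2$: in the same-parity case the answer is $(\Delta^{n-1},2)$, with $D=n-k-2$, which is huge. So the naive expansion cannot be the whole story, and I reconsider.

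\textbf{Reconsidering: the connectivity obstruction.} The key is that $G$ must be connected while $\bar G$ is nearly $(k-1)$-regular. For large $n$, a graph with $\Delta=n-k$ and all degrees close to $\Delta$ is automatically connected. So why would a degree-$1$ or degree-$2$ vertex beat $D=2$? Presumably because a $\Delta$-regular, or nearly regular, dense graph is forced to have a particular structure that makes $\rho$ very close to $\Delta$. The mechanism I expect is this: removing a low-degree vertex $w$ (degree $1$ or $2$) leaves a graph in which all other vertices are nearly at $\Delta$, and the Perron weight on $w$ is tiny, of order $1/n$. The loss is then governed by the neighbours of $w$ only. Concretely, $\varepsilon\approx\sum_v(\Delta-d(v))x_v/\sum x$, and $x_w\approx d(w)/\Delta$. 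A vertex of degree $1$ therefore contributes $(\Delta-1)\cdot(1/\Delta)\approx1$ to the numerator, not $\Delta$, versus $D=2$ contributing about $2$ when the deficient vertices have $x\approx1$. This explains the answer. Step 1 must therefore be redone with exact weights: $\varepsilon S=\sum_v(\Delta-d(v))x_v$, and each deficient vertex $v$ with small degree $d$ has $x_v\approx d/\rho$. So $(\Delta-d)x_v\approx d(1-d/\Delta)$.

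\textbf{Step 3: case analysis.} The objective becomes $\Phi=\sum_v(\Delta-d_v)x_v$, which should be minimal with $S\approx n$. Candidates are as follows. A single vertex of degree $1$ gives $\Phi\approx1$, and requires $n\Delta-\Delta+1$ even, i.e.\ $(n-1)(n-k)+1$ even, i.e.\ $n$ even and $k$ odd. A single vertex of degree $\Delta-1$ gives $\Phi\approx1$ in the $n$ odd, $k$ even case. Otherwise, $D$ with $x\approx1$ costs $\Phi\ge2$, while a vertex of degree $2$ costs about $2(1-2/\Delta)<2$, and a degree-$1$ vertex together with anything else costs more. Second-order terms must separate degree $1$ from degree $\Delta-1$ and degree $2$ from $D=2$ configurations. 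I would prove the lower bound $\Phi\ge\sum_{v:\,d_v\le\Delta/2}d_v(1-d_v/\Delta)+\sum_{v:\,d_v>\Delta/2}(\Delta-d_v)(1-O(1/n))$ and compare with explicit constructions, using that $x_v\ge(d_v-O(1))/\rho$ in a dense graph.

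\textbf{Main obstacle.} The hardest step is the precise second-order comparison in the same-parity case: degree-$2$ vertex versus two vertices of degree $\Delta-1$ (or one of degree $\Delta-2$). Both give $\Phi=2-O(1/n)$, and the $1/n$ corrections depend on the adjacency of the deficient vertices and on $S$. I would try computing $\rho$ exactly via equitable-partition quotient matrices for the best graph in each family and bounding arbitrary members by these via Rayleigh-quotient and perturbation arguments.
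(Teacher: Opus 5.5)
There is a genuine gap. The identity $\varepsilon\sum_v x_v=\sum_v(\Delta-d(v))x_v$ is correct, but your plan has no valid way to reduce to a finite list of candidate degree sequences, and the inequality you propose in its place is false.

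The bound $\Phi\ge\sum_{d_v\le\Delta/2}d_v(1-d_v/\Delta)+\cdots$ tacitly assumes that every neighbour of a low-degree vertex has Perron weight close to $\max x$. But deficient vertices can be adjacent to each other. Attach a $K_s$ to the $\Delta$-regular part by a single edge, i.e.\ take $d(G)=(\Delta^{n-s},s,(s-1)^{s-1})$. With $\max x=1$, the attaching vertex has weight about $1/\rho$ and the other $s-1$ clique vertices about $1/\rho^2$. So $\Phi=1+O(1/n)$, not the roughly $s+(s-1)^2$ your bound predicts. The honest estimate $x_v\ge(d_v-O(1))/\rho$ is vacuous exactly at these small degrees.

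Consequently your claim that a degree-$1$ vertex together with anything else costs more fails at first order. The sequence $(\Delta^{n-2},2,1)$ and the other clique configurations tie with $(\Delta^{n-1},1)$ and $(\Delta^{n-1},\Delta-1)$ at order $1/n$. For suitable $s$ they have admissible parity when $\Delta$ is odd, e.g.\ $(\Delta^{n-2},2,1)$ when $n$ is odd and $k$ is even. They are excluded only at order $n^{-2}$: $\rho+k=n-\frac1n+\frac{2-s}{n^2}+O(n^{-3})$ versus $n-\frac1n+\frac1{n^2}$.

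More fundamentally, a lower bound on $\Phi/\sum_v x_v$ valid over all of $\mathcal{C}(n,\Delta)$ would have to handle arbitrarily many, arbitrarily arranged deficient vertices. It would also have to work with a Perron vector not known to be nearly constant on $T$. The paper avoids this by using extremality:
\begin{itemize}
\item The Liu--Li lemma ($G[S]$ complete, the sets $N_T(v)$ nested) gives $s\le k-1$ and $M-m<km/\rho$.
\item Degree-preserving switchings (Operations~1--6) confine the degrees.
\item Repeated Operation~3 pushes a vertex of degree about $n/2$ up to $\Delta-2$ or $\Delta-1$ while increasing $\rho$, forcing $d(G)\in\mathcal{D}$.
\end{itemize}
This reduction is the bulk of the proof, and nothing in your proposal replaces it.

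Second, the precision you aim for is not enough in the same-parity case. An expansion of $\varepsilon$ with error $O(n^{-3})$ cannot separate $(\Delta^{n-1},2)$ from $(\Delta^{n-1},\Delta-2)$. The paper's matrix operation subtracts $(e_u-e_v)(e_u-e_v)^T$ from $A_G+kI$ for every non-adjacent pair $(u,v)\in N_T(p)\times(T\setminus N_T(p))$. This makes the natural partition equitable, with a quotient determined by the degree sequence, at a cost of at most $|\mathcal{L}(G,p)|(M-m)^2=O(n^{-3})$. After it, both sequences give quotient matrices with \emph{exactly} the same Perron root, $\lambda_{0,2}=\lambda_{0,\Delta-2}$.

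The separation is an $n^{-3}$ effect. For the explicit graph $G_2$, $\rho+k$ exceeds $\lambda_{0,2}$ by at least $(k-1)/n^3$ (Rayleigh quotient at the Perron vector of $B_{G_2,u}$). For every connected graph with degree sequence $(\Delta^{n-1},\Delta-2)$ it exceeds it by only $O(n^{-5})$, which requires bootstrapping the Perron estimates to $M-m=O(n^{-5/2})$.

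By contrast, the comparison you single out as the main obstacle, a degree-$2$ vertex versus two vertices of degree $\Delta-1$, is already decided at order $n^{-2}$: $n-\frac2n+\frac4{n^2}$ versus $n-\frac2n+\frac2{n^2}$. Your closing idea of equitable quotients plus Rayleigh-quotient perturbation is the right spirit for the final comparison. It needs a device like $B_{G,p}$ to bound \emph{every} graph with a given degree sequence, not just the best one, and it must be carried to third order.
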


We will present some preliminaries in Section 2 and give the proof of Theorem \ref{th1} in Section 3 and Section 4.
\section{Preliminaries}

In this section, we introduce notation and preliminary lemmas that will be used.
Throughout the remainder of the paper, we always suppose $k\ge4$ is fixed, $n$ is sufficiently large, and $\Delta=n-k$.

Since every graph in $\mathcal{G}(n,\Delta)$ is connected, the adjacency matrix of such a graph is irreducible.
Therefore, by  the Perron-Frobenius theorem, $A(G)$ has a unique positive unit eigenvector corresponding to $\rho(G)$, which is called the {\it Perron vector} of $G$ and denoted by $X_G=(x_v)_{v\in V(G)}$. We call $A(G)X=\rho(G)X$ the {\it Perron equation} of $G$.
For a finite set $V$ and $v\in V$, let $e_v\in\mathbb{R}^V$ denote the standard unit vector whose $v$-component is one and whose other components are zero.

\begin{lem}\label{tv}\cite{Zhan}
Let $A$ be an $n\times n$ nonnegative symmetric real matrix. Then
$$
\rho(A)=\max\limits_{x\in\mathbb{R}^n,\,\|x\|=1}x^TAx.
$$
\end{lem}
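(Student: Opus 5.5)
The plan is to combine the spectral theorem for real symmetric matrices with a short nonnegativity argument. This shows that the largest eigenvalue of $A$ equals both the maximum of the quadratic form over unit vectors and the spectral radius. No Perron--Frobenius theory beyond this elementary observation is needed, and irreducibility is not assumed.

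First, since $A$ is real symmetric, it has an orthonormal eigenbasis $u_1,\ldots,u_n$ with real eigenvalues $\lambda_1\ge\lambda_2\ge\cdots\ge\lambda_n$. Write a unit vector as $x=\sum_i c_iu_i$ with $\sum_i c_i^2=1$. Then
\[
x^TAx=\sum_i\lambda_ic_i^2\le\lambda_1,
\]
with equality at $x=u_1$. Hence $\max_{\|x\|=1}x^TAx=\lambda_1$, and the maximum is attained because the unit sphere is compact and the form is continuous.

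Second, I will show that $\rho(A)=\max_i|\lambda_i|$ equals $\lambda_1$. It suffices to prove $|\lambda_n|\le\lambda_1$. Let $y$ be a unit eigenvector for $\lambda_n$, and let $|y|$ be the vector of absolute values of its entries, which is again a unit vector. Because all entries $a_{ij}\ge0$, the triangle inequality gives
\[
|\lambda_n|=|y^TAy|=\Bigl|\sum_{i,j}a_{ij}y_iy_j\Bigr|\le\sum_{i,j}a_{ij}|y_i||y_j|=|y|^TA|y|\le\lambda_1,
\]
where the last step uses the first part. Therefore $\rho(A)=\lambda_1=\max_{\|x\|=1}x^TAx$.

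There is no real obstacle here. The only point needing care is the second step: for a general symmetric matrix the spectral radius could come from a large negative eigenvalue, and it is exactly nonnegativity, through the passage from $y$ to $|y|$, that rules this out.
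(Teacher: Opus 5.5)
The paper gives no proof of this lemma; it is quoted from \cite{Zhan}, so there is no internal argument to compare yours with. Your proof is correct and complete.

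The spectral-theorem step gives $\max_{\|x\|=1}x^TAx=\lambda_1$. The estimate $|\lambda_n|=|y^TAy|\le|y|^TA|y|\le\lambda_1$ then forces $\lambda_1\ge0$. Reducing to $\lambda_n$ is legitimate: every negative eigenvalue satisfies $|\lambda_i|\le|\lambda_n|$, and every nonnegative one satisfies $\lambda_i\le\lambda_1$. Hence $\rho(A)=\lambda_1$.

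A common textbook route instead combines the Rayleigh--Ritz characterization with the Perron--Frobenius fact that the spectral radius of a nonnegative matrix is itself an eigenvalue. That route is immediate once Perron--Frobenius theory is available, as it is throughout this paper. Your passage from $y$ to $|y|$ replaces that appeal with a one-line inequality, so your argument is entirely self-contained and, like the statement, needs no irreducibility.

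Your remark about compactness of the unit sphere is superfluous, since $u_1$ already attains the maximum.
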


Suppose that a symmetric real matrix $A$ has rows and columns indexed by a finite set $V$, and let $\pi=\{V_1,V_2,\ldots,V_r\}$ be a partition of $V$.
For $1\le i,j\le r$, let $A_{ij}$ be the submatrix of $A$ with row indices in $V_i$ and column indices in $V_j$,
and let $b_{ij}$ be the average row sum of $A_{ij}$.
The matrix $B=(b_{ij})$ is called the {\it quotient matrix} of $A$ with respect to $\pi$.
The partition $\pi$ is {\it equitable} if every block $A_{ij}$ has constant row sum.

\begin{lem}\label{QM}\cite{YOU}
Let $A$ be an irreducible nonnegative symmetric matrix, let $\pi$ be a partition of its index set,
and let $B=(b_{i,j})$ be the corresponding quotient matrix. Then
$$
\rho(A) \geq \rho(B).
$$
If the partition $\pi$ is equitable, then $\rho(A)=\rho(B)$, and each eigenvalue of $B$ is an eigenvalue of $A$.
\end{lem}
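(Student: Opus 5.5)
The plan is to realize the quotient matrix as a compression of $A$ onto the subspace of vectors that are constant on each part. Then Lemma~\ref{tv} gives the inequality directly, and the identity $AS=SB$ handles the equitable case.

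\textbf{Setup and inequality.} Let $S$ be the $|V|\times r$ characteristic matrix of $\pi$: column $j$ is $\sum_{v\in V_j}e_v$. Put $D=S^TS=\mathrm{diag}(|V_1|,\ldots,|V_r|)$. Since $b_{ij}$ is the sum of the entries of $A_{ij}$ divided by $|V_i|$, we have $B=D^{-1}S^TAS$. Let $Q=SD^{-1/2}$; its columns are orthonormal, so $Q^TQ=I_r$. Then
\[
B=D^{-1/2}\,(Q^TAQ)\,D^{1/2},
\]
so $B$ is similar to $C:=Q^TAQ$. The matrix $C$ is symmetric and nonnegative, because $Q\ge0$ entrywise. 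Hence $B$ and $C$ have the same eigenvalues, and $\rho(B)=\rho(C)$. By Lemma~\ref{tv} applied to $C$, there is a unit vector $y\in\mathbb{R}^r$ with $\rho(C)=y^TCy=(Qy)^TA(Qy)$. Since $\|Qy\|=\|y\|=1$, Lemma~\ref{tv} applied to $A$ gives $(Qy)^TA(Qy)\le\rho(A)$. Therefore $\rho(B)\le\rho(A)$.

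\textbf{Equitable case.} If $\pi$ is equitable, then each row of $A_{ij}$ sums to $b_{ij}$, which is exactly the identity $AS=SB$.
\begin{itemize}
\item \emph{Eigenvalues of $B$ are eigenvalues of $A$.} If $Bv=\lambda v$ with $v\ne0$, then $A(Sv)=SBv=\lambda Sv$. Moreover $Sv\neq0$ because $S$ has full column rank. So every eigenvalue of $B$ is an eigenvalue of $A$.
\item \emph{Equality of spectral radii.} Let $x>0$ be the Perron vector of the irreducible matrix $A$. Transposing $AS=SB$ and using $A=A^T$ gives $S^TA=B^TS^T$. Hence
\[
B^T(S^Tx)=S^TAx=\rho(A)\,S^Tx.
\]
The vector $S^Tx$ is nonzero, indeed positive. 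So $\rho(A)$ is an eigenvalue of $B^T$, hence of $B$, which gives $\rho(B)\ge\rho(A)$. Combined with the inequality above, $\rho(A)=\rho(B)$.
\end{itemize}

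No step is a real obstacle. The only points needing care are checking that $C$ is nonnegative, so that Lemma~\ref{tv} applies to it, and that $B$ and $C$ are similar, so that their spectral radii agree.
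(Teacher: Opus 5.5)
Your proof is correct. The steps $B=D^{-1}S^TAS$, the similarity $B=D^{-1/2}(Q^TAQ)D^{1/2}$, the Rayleigh-quotient comparison via Lemma~\ref{tv}, and the intertwining relation $AS=SB$ in the equitable case are all sound.

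The paper does not prove this lemma; it only cites \cite{YOU}. So your argument is a self-contained substitute rather than a variant of something in the text. You take the classical route. The inequality is the top-eigenvalue case of interlacing for the compression $Q^TAQ$. The equality comes from pushing the Perron vector of $A$ down to the positive eigenvector $S^Tx$ of $B^T$.

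The advantage of your approach is that everything reduces to Lemma~\ref{tv} and a few lines of linear algebra. The price is that both halves use $A=A^T$, whereas the equitable-partition statement also holds for nonsymmetric nonnegative matrices. For irreducible nonsymmetric $A$, one can instead lift a nonnegative eigenvector $y$ of $B$ for $\rho(B)$ to the nonnegative eigenvector $Sy$ of $A$. Uniqueness of nonnegative eigenvectors in Perron--Frobenius theory then gives $\rho(B)=\rho(A)$. Every application in the paper (Claims~\ref{claim10}, \ref{claim12} and \ref{claim13}) concerns symmetric matrices of the form $B_{G,p}$ or $B_{G_\delta}$, so your version is all that is needed.

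Two minor remarks. First, you tacitly use that every part of $\pi$ is nonempty, so that $D$ is invertible and $S$ has full column rank. Second, your argument does not actually need irreducibility: any nonzero nonnegative eigenvector $x$ of $A$ for $\rho(A)$ already gives $S^Tx\ge0$ with $S^Tx\ne0$.
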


For a graph $G$, define
$$
\begin{aligned}
S_G:=\{v\in V(G):d_G(v)<\Delta(G)\},\quad T_G:=\{v\in V(G):d_G(v)=\Delta(G)\},\quad
s_G:=|S_G|.
\end{aligned}
$$
When the underlying graph is clear, we simply write $S,T$, and $s$ instead of $S_G,T_G$, and $s_G$, respectively;
in this case, $N_T(v)$ means $N(v)\cap T$.
Thus, if $G$ has order $n$ and maximum degree $\Delta$, then $d(G)=(\Delta^{n-s},\delta_1,\delta_2,\ldots,\delta_s)$,
where $s=s_G$ and $\delta_i<\Delta$ for $i=1,2,\ldots,s$.
We have the following properties for graphs $G$ in $\mathcal{G}(n,\Delta)$.

\begin{lem}\cite{LL2}\label{lemma1}
Let $3\le\Delta\le n-2$ and $G\in \mathcal{G}(n,\Delta)$. Then $G[S]$ is a complete graph. Moreover,  $x_v\leq x_u$ if and only if $N_T(v)\subseteq N_T(u)$ for all $u,v\in S$.
\end{lem}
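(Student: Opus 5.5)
The plan is to prove both assertions by local edge moves. Each move keeps a graph in $\mathcal{C}(n,\Delta)$, and by Lemma \ref{tv} each move does not decrease the Rayleigh quotient at $X_G$. We then upgrade this to a strict increase of $\rho$, contradicting $G\in\mathcal{G}(n,\Delta)$. The strictness step is standard: if $\rho(G')=\rho(G)$ and $X_G^TA(G')X_G\ge \rho(G)$, then $X_G$ is a Perron vector of $G'$. Comparing the Perron equations of $G$ and $G'$ at a vertex whose neighbourhood changed then forces an equality between positive entries that fails.

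\emph{Completeness of $G[S]$.} Suppose $u,v\in S$ are nonadjacent. If $S\ne\{u,v\}$, or $d(u)$ and $d(v)$ are not both $\Delta-1$, we add the edge $uv$. The new graph is connected, has maximum degree $\Delta$, and is still nonregular, yet its spectral radius is strictly larger; this is a contradiction. The remaining case is $S=\{u,v\}$ with $d(u)=d(v)=\Delta-1$, where adding $uv$ gives a regular graph. We may assume $x_u\ge x_v$. Here I would rotate an edge: pick $w\in N(v)$ (so $w\in T$, since $v$ has no neighbours in $S$), delete $vw$ and add $uv$. Then $u$ gets degree $\Delta$, $v$ keeps degree $\Delta-1$, and $w$ drops to $\Delta-1$, so the graph stays nonregular with maximum degree $\Delta$. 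The quadratic form changes by $2x_v(x_u-x_w)$, so we want $w\in N(v)$ with $x_w\le x_u$, and with $vw$ not a bridge in the new graph. I expect this to be the main obstacle. If every neighbour $w$ of $v$ has $x_w>x_u$, I would instead try the symmetric move deleting some $uw'$ and adding $uv$. Failing that, I would use $\rho x_v=\sum_{N(v)}x_w$ together with $\rho x_u=\sum_{N(u)}x_w$ to derive a contradiction from $|N(u)|=|N(v)|=\Delta-1$. Connectivity should be harmless, because the new edge $uv$ together with the dense structure ($\Delta=n-k$) keeps everything connected. This is the part where I am least sure the argument closes cleanly.

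\emph{Neighbourhoods versus Perron entries.} Let $u,v\in S$. They are adjacent, and by completeness they have the same neighbours in $S\setminus\{u,v\}$. Subtracting the Perron equations at $u$ and $v$ therefore gives
\[
(\rho+1)(x_u-x_v)=\sum_{w\in N_T(u)\setminus N_T(v)}x_w-\sum_{w\in N_T(v)\setminus N_T(u)}x_w .
\]
If $N_T(v)\subseteq N_T(u)$, the right-hand side is nonnegative, so $x_v\le x_u$.

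Conversely, suppose $x_v\le x_u$ but some $w\in N_T(v)\setminus N_T(u)$ exists. Delete $vw$ and add $uw$. Then $d(u)+1\le\Delta$, $d(w)$ is unchanged, $v$ remains in $S$ so the graph stays nonregular, and $v$ stays joined to $u$ so the graph stays connected. The quadratic form changes by $2x_w(x_u-x_v)\ge0$. If $\rho$ did not strictly increase, $X_G$ would be a Perron vector of the new graph. The equation at $u$ in the new graph would then read $\rho x_u=\rho x_u+x_w$, which is impossible since $x_w>0$. This contradiction completes the equivalence.
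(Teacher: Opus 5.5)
The paper quotes this lemma from Liu and Li without proof, so there is nothing in it to compare against. Your generic case of completeness (adding $uv$ whenever the result stays nonregular) is correct. So is your second assertion, granted completeness: the identity $(\rho+1)(x_u-x_v)=\sum_{N_T(u)\setminus N_T(v)}x_w-\sum_{N_T(v)\setminus N_T(u)}x_w$ gives one direction. The move $G-vw+uw$, which stays connected because $u\sim v$ and increases $\rho$ strictly by the Perron equation at $u$, gives the other.

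\textbf{The gap.} The case $S=\{u,v\}$, $d(u)=d(v)=\Delta-1$, $u\nsim v$ is not proved, and your moves provably fail there.

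\emph{Why the moves fail.} Since $G$ is connected and nonregular, $\rho<\Delta$, so no $z\in T$ can carry the minimum entry of $X$; otherwise $\rho x_z\ge\Delta x_z$. With your normalization $x_u\ge x_v$, this gives $x_v<x_w$ for every $w\in T$.
\begin{itemize}
\item Your symmetric move (delete $uw'$, add $uv$) always has first-order change $2x_u(x_v-x_{w'})<0$.
\item If $N(u)=N(v)$, then $x_u=x_v$, and your main move has change $2x_v(x_u-x_w)<0$ for every $w\in N(v)$ as well.
\end{itemize}

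\emph{Why the fallback fails.} You cannot derive a contradiction from the Perron equations and $|N(u)|=|N(v)|=\Delta-1$, because this twin configuration is realizable. For $\Delta=n-1$ it is $K_n-e$, which is even the extremal graph, so the statement is false there. Any proof must therefore use $\Delta\le n-2$, and your argument never does. Your appeal to the density $\Delta=n-k$ for connectivity is also unavailable, since the lemma is stated for all $3\le\Delta\le n-2$.

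\textbf{What can be rescued.} Suppose $N(v)\not\subseteq N(u)$. Take $w\in N(v)\setminus N(u)$ and $G'=G-vw+uw$, which has degree sequence $(\Delta^{n-1},\Delta-2)$. Its first-order change is $2x_w(x_u-x_v)\ge 0$, and the increase in $\rho$ is strict by the Perron equation at $u$. Connectivity needs a suitable choice of $w$, since the edge $uv$ is absent.

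\textbf{What remains open.} The twin case $N(u)=N(v)$ is the genuinely missing piece. Here $\Delta\le n-2$ supplies a vertex $a\notin N(u)\cup\{u,v\}$, and you need a longer switching against a different competitor. For example, take $c\in N(u)$ and $b\in N(a)\setminus N[c]$, and let $G'=G-vc-ab+ua+bc$. This changes the quadratic form by $2(x_b-x_u)(x_c-x_a)$, which helps only when some such $c$ has $x_c\ge x_a$. The opposite situation still needs an idea.

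Until that case is closed, completeness of $G[S]$ is not established. Your proof of the second assertion uses $u\sim v$, so it is conditional on it.
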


\begin{samepage}
Let $S'=\{v\in V(G): s\le d(v)<\Delta\}$, $s_1=|S'|$ and $s_2=s-s_1$.
Then $S'\subseteq S$. By Lemma \ref{lemma1}, we have
\begin{equation}\label{eqh1}
 d(u)=s-1 \quad\text{for all}\quad u\in S\setminus S'.
\end{equation}
Hence, for each $G\in \mathcal{G}(n,\Delta)$, the degree sequence of $G$ can be written as
$$d(G)=(\Delta^{n-s},\delta_1,\delta_2,\ldots,\delta_{s_1},(s-1)^{s_2}),$$
where $s\le \delta_i<\Delta \quad \text{for } i=1,2,\ldots,s_1$.

\end{samepage}

Let $G\in\mathcal{G}(n,\Delta)$ and $u,v\in S$.
It follows from Lemma  \ref{lemma1}  that
$d(u)\le d(v)$ if and only if $x_u\le x_v$ if and only if $N_G[u]\subseteq N_G[v]$.
In particular,
$d(u)=d(v)$ if and only if $x_u=x_v$ if and only if $N_G[u]=N_G[v]$.

\begin{lem}\cite{LLL}\label{lemma3}
Let $3\le\Delta\le n-2$, $G\in \mathcal{G}(n,\Delta)$, and $u\in S,v\in T$. Then $x_u<x_v$.
\end{lem}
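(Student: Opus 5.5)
The statement to prove is Lemma \ref{lemma3}: if $u\in S$ and $v\in T$, then $x_u<x_v$. I would argue by contradiction, using an edge-rotation that preserves membership in $\mathcal{C}(n,\Delta)$ and strictly increases $\rho$. Suppose $u\in S$, $v\in T$ and $x_u\ge x_v$. Since $d(v)=\Delta>d(u)$, the set $N(v)\setminus N[u]$ is nonempty. Indeed, $|N(v)\setminus\{u\}|\ge \Delta-1\ge d(u)$, and in the boundary case one compares $N(v)\setminus\{u\}$ with $N(u)\setminus\{v\}$ directly. So pick $w\in N(v)\setminus N[u]$, $w\ne u$. Form $G'=G-vw+uw$. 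Then $d_{G'}(u)=d(u)+1\le\Delta$, $d_{G'}(v)=\Delta-1$, and all other degrees are unchanged. Hence $\Delta(G')=\Delta$, as long as some other vertex still has degree $\Delta$. This holds because $|T|\ge 2$: by the handshake lemma together with $\Delta\le n-2$, $T$ cannot be a single vertex whose removal leaves everything else below $\Delta$. If $|T|=1$, I would handle it by a parity or count argument, or by choosing the rotation differently. Moreover $G'$ is nonregular, because $v$ now has degree $\Delta-1$.

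Next, the Rayleigh quotient. By Lemma \ref{tv},
\[
\rho(G')\ge X^TA(G')X=\rho(G)+2x_w(x_u-x_v)\ge\rho(G).
\]
If $\rho(G')=\rho(G)$, then $X$ is also a Perron vector of $G'$, provided $G'$ is connected. Comparing the eigen-equations at $w$ gives $\rho x_w=\rho x_w-x_v+x_u$, so $x_u=x_v$. Comparing at $u$ gives $\rho x_u+x_w=\rho x_u$, so $x_w=0$, which contradicts positivity. Therefore $\rho(G')>\rho(G)$, contradicting the extremality of $G$.

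The main obstacle is keeping $G'$ connected, since removing $vw$ might disconnect the graph. I would choose $w$ carefully. If every choice of $w$ disconnects $G'$, then each edge $vw$ with $w\in N(v)\setminus N[u]$ is a bridge after $u$ gains the edge $uw$. But $w$ becomes joined to $u$, and $u$ remains connected to $v$ unless every $u$–$v$ path uses $vw$. Since $u$ acquires the edge $uw$, $w$ reconnects to $u$. So the only problem arises when $v$'s component splits off from $\{u,w\}$, meaning all $u$–$v$ paths pass through $vw$. In that case I would pick a different $w$, and the abundance of such choices, namely at least $\Delta-d(u)\ge1$ and typically many, should make this possible. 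Failing that, I would instead apply the rotation at a neighbor of $u$ on a $u$–$v$ path. The equality case $x_u=x_v$ also needs the strictness argument above, which requires connectivity.
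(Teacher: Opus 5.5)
The paper does not prove this lemma (it is quoted from Liu), so your argument has to stand on its own. Its core is sound. For $w\in N(v)\setminus N[u]$ and $G'=G-vw+uw$ you have $X^TA(G')X=\rho(G)+2x_w(x_u-x_v)\ge\rho(G)$. Equality would make $X$ an eigenvector of $A(G')$ for its largest eigenvalue; this holds for any symmetric matrix, so connectivity of $G'$ is not needed at this step. The eigen-equation at $u$ then forces $x_w=0$. However, the two facts needed to place $G'$ in $\mathcal{C}(n,\Delta)$ are not established.

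\textbf{First gap: $|T|\ge 2$.} You need this when $d(u)\le\Delta-2$, and it does not follow from the handshake lemma. A star $K_{1,\Delta}$ with a path on the remaining $n-\Delta-1$ vertices attached to one leaf is connected, nonregular, and has a unique vertex of degree $\Delta$. You must use extremality. Suppose $T=\{v\}$. Every $x\in S$ has $d(x)\le\Delta-1\le n-3<|S\setminus\{x\}|$, so $S$ spans a nonedge $xy$. Then $G+xy$ is connected and still has maximum degree $\Delta$ at $v$. It is nonregular, since the $n-3\ge2$ vertices of $S\setminus\{x,y\}$ stay deficient. Its spectral radius is strictly larger, a contradiction. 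Alternatively, Lemma~\ref{lemma1} gives $|T|\ge2$ at once.

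\textbf{Second gap: connectivity of $G'$.} You correctly identify when $G'$ is disconnected: exactly when every $u$--$v$ path in $G$ ends with the edge $wv$. But ``the abundance of such choices should make this possible'' is not a proof, and as you note there may be only $\Delta-d(u)=1$ candidate. The missing observation is that at most one $w$ can be bad, since in the bad case $w$ is the penultimate vertex of every $u$--$v$ path. Hence if $|N(v)\setminus N[u]|\ge 2$, some choice works. If $N(v)\setminus N[u]=\{w\}$, there are two cases:
\begin{itemize}
\item if $u\sim v$, the edge $uv$ itself is a $u$--$v$ path avoiding $vw$;
\item if $u\nsim v$, then $N(v)\setminus\{w\}\subseteq N(u)$ and this set is nonempty, so $u$ and $v$ have a common neighbor $y\ne w$, giving the path $u\,y\,v$.
\end{itemize}
Either way $G'$ is connected, so your fallback of rotating ``at a neighbor of $u$ on a $u$--$v$ path'' is unnecessary. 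With these two repairs your argument proves the lemma.
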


\begin{lem}\label{lemma4}
Let $G\in \mathcal{G}(n,n-k)$. Then $s\le k-1$.
\end{lem}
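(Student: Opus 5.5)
The plan is a direct edge-switching argument, using Lemma \ref{lemma1} and the Rayleigh quotient of Lemma \ref{tv}. Suppose for contradiction that $G\in\mathcal{G}(n,n-k)$ has $s\ge k$, and let $X=X_G$ be its Perron vector. Choose $u\in S$ with $x_u$ maximal among the vertices of $S$. I will build a graph $G'\in\mathcal{C}(n,\Delta)$ with $\rho(G')>\rho(G)$, which contradicts the extremality of $G$.

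\emph{Step 1: find the switch.} By Lemma \ref{lemma1}, $G[S]$ is complete, so every non-neighbour of $u$ lies in $T$. Since $d(u)\le \Delta-1=n-k-1<n-1$, there is some $w\in T$ with $uw\notin E(G)$. As $d(w)=n-k$, the vertex $w$ has exactly $k-1$ non-neighbours, and $u$ is one of them. So $w$ has at most $k-2$ non-neighbours in $S\setminus\{u\}$. Since $|S\setminus\{u\}|=s-1\ge k-1$, there is a vertex $v\in S\setminus\{u\}$ with $vw\in E(G)$.

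\emph{Step 2: check that $G'$ is feasible.} Let $G'=G-vw+uw$. All degrees are unchanged except $d_{G'}(u)=d_G(u)+1\le\Delta$ and $d_{G'}(v)=d_G(v)-1<\Delta$. Hence $\Delta(G')=\Delta$, since $w$ still has degree $\Delta$, and $G'$ is nonregular because of $v$. For connectivity, the only removed edge is $vw$. In $G'$ both endpoints are joined to $u$: $w$ through the new edge and $v$ through the clique $S$. So $G'$ is connected and $G'\in\mathcal{C}(n,\Delta)$.

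\emph{Step 3: compare spectral radii.} By Lemma \ref{tv},
\[
\rho(G')\ge X^TA(G')X=\rho(G)+2x_w(x_u-x_v)\ge\rho(G),
\]
where the last inequality uses the maximality of $x_u$ on $S$. To get strict inequality, suppose $\rho(G')=\rho(G)$. Then $X$ attains the maximum in Lemma \ref{tv} for $A(G')$, so $X$ is an eigenvector of $A(G')$ for $\rho(G')=\rho(G)$. But the $u$-row gives
\[
(A(G')X)_u=\rho(G)x_u+x_w\neq \rho(G)x_u,
\]
because $x_w>0$. This is a contradiction, so $\rho(G')>\rho(G)=\lambda_1(n,\Delta)$, which contradicts $G\in\mathcal{G}(n,\Delta)$. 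Therefore $s\le k-1$.

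The only delicate point is Step 1: the counting must produce a neighbour $v$ of $w$ inside $S$ other than $u$, and this is exactly where $s\ge k$ is needed. The strictness in Step 3 could also be argued through Lemma \ref{lemma1}: equality $x_u=x_v$ would force $N[u]=N[v]$, which is impossible since $w\in N(v)\setminus N(u)$. The eigenvector argument above avoids relying on this.
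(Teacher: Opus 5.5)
Your proof is correct, and it shares the paper's counting core: since $s\ge k$ exceeds the $k-1$ non-neighbours of a vertex of $T$, every vertex of $T$ has a neighbour in $S$. Where you differ is in how the contradiction is obtained.

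The paper uses the nesting half of Lemma~\ref{lemma1}. The sets $N_T(v)$, $v\in S$, are linearly ordered by inclusion and cover $T$, so the largest of them is all of $T$. Since $G[S]$ is complete, the corresponding vertex then has degree $n-1>\Delta$. This is a purely combinatorial finish.

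You use only the clique half of Lemma~\ref{lemma1}. You carry out the rotation $G-vw+uw$ explicitly, compare Rayleigh quotients, and get strictness from the eigenvector argument. Your feasibility checks (degrees, nonregularity, connectivity via $v\sim u\sim w$) and the strictness step are all sound.

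In effect, your Steps~2--3 reprove exactly the instance of the nesting property that is needed here. If you allow yourself the full Lemma~\ref{lemma1}, your Step~1 already ends the proof: $x_u\ge x_v$ forces $N_T(v)\subseteq N_T(u)$, which contradicts $w\in N_T(v)\setminus N_T(u)$.

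So the paper's route is shorter given the cited lemma. Yours is self-contained modulo the clique property and exposes the spectral switching mechanism that underlies the nesting statement.
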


\begin{proof}
To the contrary, suppose  $|S|\ge k$.
Since each vertex $u\in T$ has exactly $k-1$ nonneighbors in $V(G)\setminus\{u\}$, $u$ must be adjacent to at least one vertex in $S$. Hence $T=\bigcup_{v\in S}N_T(v)$.
By Lemma \ref{lemma1}, the sets $N_T(v)$, $v\in S$, are linearly ordered by inclusion. Therefore, there exists a vertex $v\in S$ adjacent to all the vertices in $T$.
Together with Lemma \ref{lemma1}, this implies that $v$ is adjacent to all the vertices in $V(G)\setminus\{v\}$, and hence $d(v)=n-1>\Delta$, a contradiction.
\end{proof}

\begin{lem}\label{connect}
Let $k\ge4$, $n\ge4k-2$  and   $\Delta=n-k$.
Suppose $G$ is a graph of order $n$ with maximum degree $\Delta$.
If $G[S]$ is a complete graph, $|S|\le k-1$, and $\partial(S,T)\neq \emptyset$, then $G$ is connected.
\end{lem}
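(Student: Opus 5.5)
The idea is to show that $G[T]$ is connected and that every vertex of $S$ reaches $T$ through $G[S]$.

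First, $G[S]$ is complete by hypothesis. Since $\partial(S,T)\neq\emptyset$, some vertex $v\in S$ has a neighbour in $T$. Every other vertex of $S$ is adjacent to $v$. So if $G[T]$ is connected, every vertex of $G$ lies in the component containing $T$, and $G$ is connected. It remains to show that $G[T]$ is connected; the degenerate case $S=\emptyset$ is covered by the same argument.

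To prove that $G[T]$ is connected, use degrees. Every $u\in T$ has $d(u)=n-k$ and at most $|S|\le k-1$ neighbours in $S$. Hence
\[
d_{G[T]}(u)\ge n-k-(k-1)=n-2k+1 .
\]
Also $|T|=n-|S|\ge n-k+1$.

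Suppose $G[T]$ is disconnected, and let $C$ be its smallest component. Then $|C|\le |T|/2\le n/2$. But each vertex of $C$ has at least $n-2k+1$ neighbours inside $C$, so
\[
|C|\ge n-2k+2 .
\]
Combining the two bounds gives $n-2k+2\le n/2$, that is, $n\le 4k-4$. This contradicts $n\ge 4k-2$. Therefore $G[T]$ is connected.

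The only step needing care is this counting estimate, and the hypothesis $n\ge 4k-2$ leaves enough slack for it. Equivalently, one can argue that any two vertices of $T$ have degree sum in $G[T]$ at least $2(n-2k+1)\ge |T|$, so they are adjacent or share a common neighbour in $T$. Either way, $G[T]$ is connected, and with the first paragraph this shows $G$ is connected.
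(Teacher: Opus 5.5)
Your proof is correct and is essentially the paper's argument. The paper also uses $|N_T(u)|\ge n-2k+1$ for $u\in T$, shows exactly as in your closing remark that any two vertices of $T$ have a common neighbour in $T$ because $2(n-2k+1)>n-1\ge|T|$, and then attaches $S$ via the completeness of $G[S]$ and an edge of $\partial(S,T)$. Your smallest-component count is just an equivalent variant of that step, and the case $S=\emptyset$ you mention is already excluded by the hypothesis $\partial(S,T)\neq\emptyset$.
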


\begin{proof}
Since $|S|\le k-1$ and $\Delta(G)=\Delta$, we have $|N_T(u)|\ge \Delta-|S|\ge n-2k+1$ for all $u\in T$.
Since $\partial(S,T)\neq\emptyset$, we have $|S|\ge1$ and hence $|T|\le n-1$. Therefore, for any distinct $u,v\in T$,
$$
|N_T(u)|+|N_T(v)|\ge2(n-2k+1)>n-1\ge|T|.
$$
It follows that $N_T(u)\cap N_T(v)\neq\emptyset$, so $G[T]$ is connected.
Combining this with the fact that $G[S]$ is complete and $\partial(S,T)\neq\emptyset$, we conclude that $G$ is connected.
\end{proof}

\begin{lem}\label{ad}
Let $k\ge4$ be a fixed integer. Then
$$
\lim_{n\rightarrow\infty}\frac{\lambda_1(n,n-k)}{n}=1.
$$
\end{lem}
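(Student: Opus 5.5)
We prove the limit by sandwiching $\lambda_1(n,n-k)/n$ between two quantities that both tend to $1$.

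\emph{Upper bound.} Every $H\in\mathcal{C}(n,n-k)$ is connected and nonregular with maximum degree $n-k$. Since $\rho(H)\le\Delta(H)$, we get
$$
\lambda_1(n,n-k)\le n-k,
$$
and therefore $\limsup_{n\to\infty}\lambda_1(n,n-k)/n\le 1$.

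\emph{Lower bound.} It suffices to exhibit, for each large $n$, one graph $H_n\in\mathcal{C}(n,n-k)$ with $\rho(H_n)\ge n-c_k$, where $c_k$ depends only on $k$. Since $\lambda_1(n,n-k)\ge\rho(H_n)$, this gives $\liminf\lambda_1(n,n-k)/n\ge1$.

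We use Lemma~\ref{tv} with the all-ones vector:
$$
\rho(H)\ge \frac{\mathbf 1^TA(H)\mathbf 1}{n}=\bar d(H).
$$
So it is enough to build $H_n$ with average degree at least $n-k-O(1/n)$, or even just $n-c_k$.

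\emph{Construction of $H_n$.}
\begin{itemize}
\item Choose $m\in\{n-1,n-2\}$ so that $m(n-k)$ is even. If $k$ is even, take $m=n-1$; if $k$ is odd, take whichever of $n-1,n-2$ makes $m(n-k)$ even.
\item Since $n-k\le m-1$, there is an $(n-k)$-regular graph $R$ on $m$ vertices. Concretely, take the complement of a $(m-1-(n-k))$-regular graph on $m$ vertices, e.g.\ a suitable circulant.
\item Delete one edge $ab$ of $R$ and join the remaining $n-m\in\{1,2\}$ new vertices to $a$ and $b$. With one new vertex $w$, add the edges $wa,wb$. With two new vertices $w,w'$, add $wa$ and $w'b$, plus the edge $ww'$ if desired.
\end{itemize}
In this graph $a,b$ again have degree $n-k$, the new vertices have degree $1$ or $2<n-k$, and all other vertices keep degree $n-k$. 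So the maximum degree is exactly $n-k$ and the graph is nonregular.

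Connectivity follows from Lemma~\ref{connect}. Here $S$ consists of the new vertices, which form a clique of size at most $2\le k-1$. There is an edge between $S$ and $T$, and $n\ge4k-2$ once $n$ is large. Alternatively, one checks directly that $R-ab$ is connected because it is dense.

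The edge count is at least $\tfrac{(n-2)(n-k)}{2}$, so
$$
\bar d(H_n)\ge\frac{(n-2)(n-k)}{n}\ge n-k-2.
$$

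\emph{Conclusion.} Combining the two bounds,
$$
n-k-2\le\lambda_1(n,n-k)\le n-k.
$$
Dividing by $n$ and letting $n\to\infty$ gives the limit $1$.

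The only nontrivial point is the parity bookkeeping that ensures the regular graph $R$ exists. Choosing $m\in\{n-1,n-2\}$ with $m(n-k)$ even, together with $n-k\le m-1$, handles this in all cases.
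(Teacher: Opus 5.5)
Your proof is correct, but it obtains the lower bound by a different route from the paper's. The paper takes an extremal graph $G\in\mathcal{G}(n,n-k)$ and uses Lemma~\ref{lemma4} to get $s\le k-1$. Lemma~\ref{lemma4} rests on the cited structural result Lemma~\ref{lemma1}: $G[S]$ is a clique and the sets $N_T(v)$, $v\in S$, are nested. This gives $\lambda_1(n,n-k)=\rho(G)\ge\bar d(G)\ge (n-k+1)(n-k)/n>n-2k+1$.

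You avoid any information about extremal graphs. You exhibit an explicit $H_n\in\mathcal{C}(n,n-k)$ with only one or two deficient vertices, and then apply the same Rayleigh-quotient bound $\rho\ge\bar d$ with the all-ones vector.

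The trade-off is as follows. The paper's argument is a two-line consequence of machinery it needs later anyway, but it depends on the cited result and tacitly assumes $\mathcal{G}(n,n-k)\neq\emptyset$. Yours is self-contained and shows along the way that $\mathcal{C}(n,n-k)\neq\emptyset$. It also gives the sharper bound $\lambda_1(n,n-k)\ge n-k-2$. The price is the parity bookkeeping, which you handled correctly: for $k$ even, $n-1$ and $n-k$ have opposite parities, and for $k$ odd, $m=n-2$ always works.

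One small imprecision concerns the two-new-vertex case. There Lemma~\ref{connect} applies only if you actually add the edge $ww'$; otherwise $S=\{w,w'\}$ is not a clique. Your direct argument covers both variants: in $R-ab$, two nonadjacent vertices have degree sum at least $2(n-k-1)>m-2$ for large $n$, so they share a neighbour and $R-ab$ is connected. So either rely on that argument or always include $ww'$.
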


\begin{proof}
Let $G\in\mathcal{G}(n,n-k)$. By Lemma \ref{lemma4}, $s\le k-1$. Hence
$$
n-k\ge\lambda_1(n,n-k)=\rho(G)\ge\bar d(G)\ge\frac{(n-s)(n-k)}{n}\ge\frac{(n-k+1)(n-k)}{n}>n-2k+1.
$$
Dividing by $n$ and letting $n\rightarrow\infty$ proves the result.
\end{proof}

Define
$$
\begin{aligned}
\mathcal{D}:={}&\{(\Delta^{n-s},\Delta-1,(s-1)^{s-1}),(\Delta^{n-s},s+1,(s-1)^{s-1}),(\Delta^{n-s},s,(s-1)^{s-1}):1\le s<k\}\\
&\cup\{(\Delta^{n-s},s^2,(s-1)^{s-2}):2\le s<k\}\cup\{(\Delta^{n-1},\Delta-2),(\Delta^{n-2},(\Delta-1)^2)\}.
\end{aligned}
$$
To prove Theorem \ref{th1}, we first state and prove the following lemma.

\begin{lem}\label{lemma5}
Let $k\ge4$ be fixed, let $n$ be a sufficiently large  integer, and set $\Delta=n-k$.
If $G\in \mathcal{G}(n,\Delta)$, then $d(G)\in \mathcal{D}$.
\end{lem}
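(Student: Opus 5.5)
The plan is to turn the spectral extremality of $G$ into a few local switching inequalities, and to use them to cut down the multiset of degrees in $S'$. Everything starts from the identity, valid for a unit vector $x$,
\[
\Delta-x^TA(G)x=\sum_{uv\in E(G)}(x_u-x_v)^2+\sum_{v\in V(G)}(\Delta-d(v))x_v^2 ,
\]
together with Lemma \ref{tv}. Any graph $G'\in\mathcal{C}(n,\Delta)$ with $X_G^TA(G')X_G>\rho(G)$ contradicts $G\in\mathcal{G}(n,\Delta)$. For the graphs $G'$ built below, membership in $\mathcal{C}(n,\Delta)$ will be checked via Lemma \ref{connect}: the switchings keep $S_{G'}$ a clique of size at most $k-1$ with $\partial(S,T)\ne\emptyset$, and keep $G'$ nonregular.

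\textbf{Step 1 (Perron vector estimates).} By Lemma \ref{ad}, $\rho=n-O(1)$. Let $x_{\max}$ denote the largest Perron entry on $T$. Every $y\in T$ misses only $k-1$ vertices, so comparing Perron equations for $y,y'\in T$ gives $x_y=x_{\max}(1-O(1/n))$. For $v\in S$ the Perron equation gives
\[
x_v=\frac{|N_T(v)|}{n}\,x_{\max}\,(1+O(1/n))+O(x_{\max}/n).
\]
In particular the vertices of $S\setminus S'$ have entries $O(x_{\max}/n)$. By Lemma \ref{lemma1} the vertices of $S$ are linearly ordered by closed neighbourhoods. Write $S'=\{w_1,\dots,w_{s_1}\}$ with $d(w_1)\ge\cdots\ge d(w_{s_1})\ge s$.

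\textbf{Step 2 (high-degree vertices of $S$).} Call $w\in S'$ \emph{heavy} if $x_w>\frac12 x_{\max}(1+\varepsilon)$, roughly $d(w)>n/2$. Suppose a heavy $w$ has $\Delta-d(w)\ge3$. The non-neighbours of $w$ in $T$ form a set of size at least $n/2-O(1)$, and each $y\in T$ misses only $k-1$ vertices, so this set contains an edge $yz$. Replace $yz$ by $wy,wz$. The degrees of $y,z$ are unchanged, $d(w)$ rises by $2$ and stays below $\Delta$, and the quadratic form changes by
\[
2(x_wx_y+x_wx_z-x_yx_z)>0 .
\]
This contradicts extremality, so every heavy vertex has deficit at most $2$. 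A similar switching handles two heavy vertices $w_1,w_2$ with $\Delta-d(w_1)\ge1$ and $\Delta-d(w_2)\ge2$: choose $y\notin N(w_1)$ and $z\notin N(w_2)$ with $yz\in E$, and replace $yz$ by $w_1y,w_2z$. Together with the clique structure this shows that the heavy part contributes total deficit at most $2$, i.e.\ either one vertex of degree $\Delta-1$ or $\Delta-2$, or two vertices of degree $\Delta-1$. If a heavy vertex exists, I would then show that every other vertex of $S$ must have been absorbed, which gives $s\le2$ and the last two members of $\mathcal{D}$ together with $(\Delta^{n-1},\Delta-1)$. The absorbing move takes a light vertex $v$ and transfers its $T$-edges to the heavy vertex, using the same inequality with $x_v\ll x_w$.

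\textbf{Step 3 (light vertices).} This step targets the case with no heavy vertex. The goal is to show that $w_2,\dots,w_{s_1}$ all have degree $s$ and that $d(w_1)\in\{s,s+1\}$, or else $d(w_1)=\Delta-1$, which is the case $s$ arbitrary in the first family. I intend to compare $G$ with explicit competitors, using quotient matrices of equitable-type partitions (Lemma \ref{QM}) to get second-order expansions
\[
\Delta-\rho=\frac{c(d(G))}{n^2}+o(n^{-2}).
\]
The costs $c(\cdot)$ are then compared between the given sequence and the members of $\mathcal{D}$ with the same $s$ and the same parity of $\sum_v(\Delta-d(v))$. The parity constraint comes from $n\Delta$ minus the total deficit being even.

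The intuition is that a light vertex of degree $d$ with $s+2\le d\le n/2$ pays a linear-in-$d$ penalty $\sum(x_v-x_y)^2$ over its $T$-edges without the compensation a heavy vertex gets. Concretely, the competitor moves its $T$-edges onto $w_1$ (making $w_1$ heavy) or onto an edge of $T$, one pair at a time, via switchings of the form ``delete $vy$ and $yz$, add $w_1y$ and $vz$'', whose gain can be computed from Step 1.

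\textbf{Main obstacle.} Step 3 is the main obstacle, in the intermediate range where $x_v$ is of constant order times $x_{\max}$ but below the heavy threshold. Here first-order estimates do not decide the sign of the switching gain, and I expect to need either the exact quotient-matrix computation or a two-vertex switching whose gain is quadratic in $|N_T(v)|$. I would try the quotient matrix first.
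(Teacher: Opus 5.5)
\textbf{There is a genuine gap in Step~3.} Your Steps~1--2 roughly reproduce Lemma~\ref{lemma6} and Claims~\ref{claim1}--\ref{claim4} (Operations 1--3). Step~3, the case where $S'$ has no heavy vertex, is the core of the lemma, and it is only announced, not carried out. The tools you name would also not work as stated.

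\begin{itemize}
\item $\rho(G)$ is not a function of $d(G)$. For a non-equitable partition, Lemma~\ref{QM} only gives $\rho(A)\ge\rho(B)$, which is the wrong direction for showing that a competitor beats $G$. The paper uses quotient matrices only after $d(G)\in\mathcal D$ is known. It does so through the positive semidefinite correction $B_{G,p}$, which is small precisely because $|\mathcal L(G,p)|\le k^2-1$ there.
\item Your scale is off. For members of $\mathcal D$ one has $\Delta-\rho=\Theta(1/n)$, not $\Theta(n^{-2})$. If some $v\in S'$ has $|N_T(v)|\approx n/2$, then $\Delta-\rho$ is bounded below by a positive constant.
\item The switching ``delete $vy,yz$, add $w_1y,vz$'' needs $y\in N_T(v)\setminus N(w_1)$. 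This set is empty by Lemma~\ref{lemma1}, since $d(w_1)\ge d(v)$. The move would also leave $d(v)$ unchanged and drop $d(y)$ to $\Delta-1$.
\end{itemize}

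Separately, in Step~2 the vertices of $S\setminus S'$ have no $T$-edges and cannot be absorbed. So you only get $|S'|\le2$, not $s\le2$. The whole family $(\Delta^{n-s},\Delta-1,(s-1)^{s-1})$ survives. The sequences $(\Delta^{n-s},\Delta-2,(s-1)^{s-1})$ with $s\ge2$ and $(\Delta^{n-s},(\Delta-1)^2,(s-1)^{s-2})$ with $s\ge3$ must be excluded by using the remaining deficient vertices to guarantee nonregularity.

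\textbf{The missing idea.} You need a way to handle the balance point $x_v\approx m/2$. There, every single switching that adds or removes $T$-edges at $v$ has first-order gain about $\pm2m(2x_v-m)\approx0$, which is exactly your obstacle.

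The paper proceeds in two steps. First, switchings that delete $T$-edges of $v$ (Operations 4--6, Claims~\ref{claim5}--\ref{claim7}) give $x_v\ge m/2-(M-m)^2/(2m)$ for $v\in S'$ outside the low-degree members of $\mathcal D$. Together with $x_v\le M/2$, this pins $d(v)=n/2+O(k)$ (Claim~\ref{claim8}). Second, it applies Operation~3 about $n/4$ times at the top vertex $u$ of $S'$, until $d(u)\in\{\Delta-2,\Delta-1\}$. It then tests the new graph with $X$ in which $x_u$ is reset to $m$, obtaining $-\rho(m-x_u)^2+2r\bigl(m^2-(M-m)^2\bigr)\approx(\tfrac n2-\tfrac n4)m^2>0$.

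Alternatively, your own identity appears to settle the intermediate range directly; I have only sketched this.
\begin{itemize}
\item Write $t_v=|N_T(v)|$ and $D_v=\Delta-d(v)$, so $t_v+D_v=\Delta-s+1$. Since $x_v<m\le x_y$ for $v\in S$ and $y\in T$, each $v\in S'$ contributes at least $\min_x[t_v(m-x)^2+D_vx^2]=\frac{t_vD_v}{t_v+D_v}m^2$ to $\Delta-\rho$.
\item Compare with $G_{\Delta-1}$ or $G_{\Delta-2}$ from Claim~\ref{claim12}, whose average degree is at least $\Delta-2/n$. Using $m^2>1/\bigl(n(1+k/\rho)^2\bigr)$, this gives $\sum_{v\in S'}\frac{t_vD_v}{t_v+D_v}<2+O(1/n)$.
\item Hence every $v\in S'$ has $t_v\le2$ or $D_v\le2$, and $\sum_{v\in S'}\min\{t_v,D_v\}\le2$.
\item The few surviving sequences outside $\mathcal D$ are then killed by switchings of your Step~2 type, including the absorbing move.
\end{itemize}
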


\section{Proof of Lemma \ref{lemma5}}

We first give a Perron vector estimate that will also be used in the proof of Theorem \ref{th1}.

\begin{lem}\label{lemma6}
Let $k\ge4$ be fixed, let $n$ be sufficiently large, and let $G\in\mathcal{C}(n,n-k)$.
Denote
$$
\rho=\rho(G),\qquad X=X_G=(x_v)_{v\in V},\qquad M=\max_{v\in T}x_v,\qquad m=\min_{v\in T}x_v.
$$
Suppose that
\begin{itemize}
\item[(i)] $G[S]$ is complete;
\item[(ii)] for every $u,v\in S$, either $N_T(u)\subseteq N_T(v)$ or $N_T(v)\subseteq N_T(u)$.
\end{itemize}
Then
\begin{equation}\label{eq1}
0\le M-m<\frac{km}{\rho}\le\frac{k}{\rho\sqrt{n-s}},
\end{equation}
\begin{equation}\label{eq2}
1-\frac{k}{\rho}<\frac{m}{M}\le1\le\frac{M}{m}<1+\frac{k}{\rho}.
\end{equation}
Moreover,
\begin{equation}\label{eq3}
(d(u)-d(v))m\le(\rho+1)(x_u-x_v)\le(d(u)-d(v))M\quad\text{for all}\quad
 u,v\in S \text{ with }d(u)\ge d(v).
\end{equation}
\end{lem}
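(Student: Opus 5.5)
The plan is to work directly with the Perron equation, using hypotheses (i) and (ii) only to control the $S$-vertices. Throughout, $\bar N(w):=V\setminus N(w)$, which contains $w$; for $w\in T$ we have $|\bar N(w)|=n-\Delta=k$.

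\textbf{Step 1: the key identity for $T$-vertices.} Let $W=\sum_{w\in V}x_w$. For $w\in T$, the Perron equation gives $\rho x_w=W-\sum_{y\in\bar N(w)}x_y$. Choose $u,v\in T$ with $x_u=M$ and $x_v=m$. Subtracting the two equations gives
\[
\rho(M-m)=\sum_{y\in\bar N(v)}x_y-\sum_{y\in\bar N(u)}x_y .
\]
Both sums have exactly $k$ terms. The second sum contains $x_u=M$ and the first contains $x_v=m$. So
\[
\rho(M-m)\le m-M+\sum_{y\in\bar N(v)\setminus\{v\}}x_y-\sum_{y\in\bar N(u)\setminus\{u\}}x_y .
\]
This rearranges to $(\rho+1)(M-m)\le (k-1)\max_V x-(k-1)\min_{\bar N(u)}x$.

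To reach $M-m<km/\rho$ I need a bound of the form $\max_{V}x\le M$, that is, $x_z\le M$ for $z\in S$. Given that, the right-hand side is at most $(k-1)M$. A cruder but sufficient estimate also works: bound the whole right-hand side by $(k-1)M<(k-1)(m+(M-m))$. The resulting linear inequality in $M-m$ gives $(\rho+1-(k-1))(M-m)<(k-1)m$, and hence $M-m<km/\rho$ for $n$ large, because $\rho\ge \bar d(G)>n-2k+1$ as in Lemma \ref{ad}.

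\textbf{Step 2: the main obstacle, controlling $S$.} I expect the hardest point to be showing $x_z\le M$ for $z\in S$. By (i) and (ii), the vertex $z\in S$ with largest $x_z$ can be taken to have $N_T(z)$ maximal among the $S$-vertices, and then $N[w]\subseteq N[z]$ for all $w\in S$. Its Perron equation gives
\[
\rho x_z\le (s-1)x_z+|N_T(z)|\,M\le (s-1)x_z+(\Delta-s)M .
\]
The first attempt is to conclude $x_z\le \frac{\Delta-s}{\rho-s+1}M$. If $\rho\ge\Delta-1$ this is at most $M$. Otherwise I would use instead that $x_z\le M(1+O(k/n))$, which is all Step 1 actually needs: it changes the constant only by a factor $1+O(1/n)$, and the strict inequality $M-m<km/\rho$ survives for $n$ large. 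This relies on $\rho\ge n-2k+1$ and $s\le n$ being harmless. If $s$ is not assumed small this needs care, so I would first try to invoke $s\le k-1$ where available.

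\textbf{Step 3: the remaining inequalities.} Since $X$ is a unit vector, $(n-s)m^2\le\sum_{w\in T}x_w^2\le 1$, so $m\le 1/\sqrt{n-s}$. This gives the last inequality in (\ref{eq1}). Dividing (\ref{eq1}) by $m$ gives $M/m<1+k/\rho$. Then $m/M>1/(1+k/\rho)>1-k/\rho$, which is (\ref{eq2}).

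For (\ref{eq3}), take $u,v\in S$ with $d(u)\ge d(v)$. By (i) and (ii), $N[v]\subseteq N[u]$, so $N(u)\setminus N(v)=\{v\}\cup D$ with $D=N_T(u)\setminus N_T(v)$ and $|D|=d(u)-d(v)$, while $N(v)\setminus N(u)=\{u\}$. Hence $\rho(x_u-x_v)=x_v-x_u+\sum_{y\in D}x_y$, that is, $(\rho+1)(x_u-x_v)=\sum_{y\in D}x_y$. This sum lies between $|D|m$ and $|D|M$, which is exactly (\ref{eq3}).
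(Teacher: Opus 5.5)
Your argument is essentially the paper's. You use the same subtraction of Perron equations at the $T$-vertices achieving $M$ and $m$, based on each $T$-vertex having exactly $k-1$ non-neighbours besides itself. You obtain the same bound $x_z\le\frac{\Delta-s}{\rho-s+1}M$ for $z\in S$: you get it from the maximizing $S$-vertex, while the paper sums over $S$. The normalization $m\le 1/\sqrt{n-s}$ is the same, and your derivation of (\ref{eq3}) from $N_T(v)\subseteq N_T(u)$ is identical to the paper's. Solving the linear inequality for $M-m$ directly, instead of the paper's $\frac{\rho+k}{\rho+1}$ manipulation, is a harmless variation. Note, though, that $(k-1)M=(k-1)(m+(M-m))$ is an equality, not a strict inequality. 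The strictness in $M-m<km/\rho$ comes only from $\rho>k(k-2)$.

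There is one gap: the bound $s\le k-1$, which you leave conditional ("invoke $s\le k-1$ where available"). It is not available from earlier results. Lemma \ref{lemma4} and Lemma \ref{ad} are stated for extremal graphs in $\mathcal{G}(n,n-k)$, whereas here $G$ is an arbitrary member of $\mathcal{C}(n,n-k)$ satisfying (i) and (ii).

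Both of your key estimates need this bound:
\begin{itemize}
\item The lower bound $\rho\ge\bar d(G)>n-2k+1$ rests on $\bar d(G)\ge (n-s)(n-k)/n$, and for large $n$ this exceeds $n-2k+1$ precisely when $s\le k-1$.
\item Without $\rho$ large and $s$ bounded, you cannot conclude that $\frac{\Delta-s}{\rho-s+1}=1+O(1/n)$. So the remark that ``$s\le n$'' is harmless is not right.
\end{itemize}

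The fix is short, and it is exactly how the paper begins: the proof of Lemma \ref{lemma4} uses only (i) and (ii).
\begin{itemize}
\item Suppose $s\ge k$. Every $w\in T$ has only $k-1$ non-neighbours other than itself, so it has a neighbour in $S$. Hence $T=\bigcup_{v\in S}N_T(v)$.
\item By (ii) these sets form a chain, so some $v\in S$ has $N_T(v)=T$.
\item By (i), $v$ is also adjacent to all of $S\setminus\{v\}$, so $d(v)=n-1>\Delta$, a contradiction.
\end{itemize}
With $s\le k-1$ in hand, the rest of your proof goes through as written.
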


\begin{proof}
For simplicity, we denote $x(S):=\sum_{w\in S}x_w$.
We first show that $s\le k-1$. Using assumptions~(i) and~(ii), we obtain $s\le k-1$
by the same argument as in the proof of Lemma~\ref{lemma4}.

For each $u\in S$,   condition (i) and the Perron equation at $u$ imply
$$
(\rho+1)x_u=x(S)+\sum_{w\in N_T(u)}x_w,
$$
and hence
$$
(d(u)+1-s)m+x(S)\le(\rho+1)x_u\le(d(u)+1-s)M+x(S).
$$
If $u,v\in S$ and $d(u)\ge d(v)$, then $|N_T(u)|\ge |N_T(v)|$, and condition (ii) gives $N_T(v)\subseteq N_T(u)$. Therefore, we obtain
$$
(\rho+1)(x_u-x_v)=\sum_{w\in N_T(u)\setminus N_T(v)}x_w,
$$
which proves (\ref{eq3}).

Since each vertex of $T$ has degree $n-k$, we have
$$
\rho\ge \frac{2e(G)}{n}\ge \frac{(n-s)(n-k)}{n}>n-2k+1.
$$
Summing
 \begin{equation}\label{eq4}
(\rho+1)x_u\le(d(u)+1-s)M+x(S)
\end{equation}
 over $u\in S$ and using $d(u)\le n-k-1$, we obtain
$$
(\rho+1-s)x(S)\le s(n-k-s)M.
$$
Substituting this bound into \eqref{eq4}, we get
\begin{equation}\label{eqh5}
x_u\le\frac{n-k-s}{\rho+1-s}M\qquad\text{for all }\quad u\in S.
\end{equation}

Recall that
\begin{equation}\label{eq5}
|V(G)\setminus N_G[v]|=k-1\quad\text{for all}\quad v\in T
\end{equation}
and
\begin{equation}\label{eq6}
(\rho+1)x_v=\sum_{w\in V(G)}x_w-\sum_{w\in V(G)\setminus N_G[v]}x_w\quad\text{for all}\quad v\in V(G).
\end{equation}
Choose $p,q\in T$ such that $x_p=M$ and $x_q=m$.
 Subtracting   $p$ and $q$ into \eqref{eq6}, we obtain
$$
(\rho+1)(M-m)=\sum_{w\in V(G)\setminus N_G[q]}x_w-\sum_{w\in V(G)\setminus N_G[p]}x_w.
$$
Since the Perron vector is positive, we have
$$
(\rho+1)(M-m)\le\sum_{w\in V(G)\setminus N_G[q]}x_w.
$$
By \eqref{eqh5},\eqref{eq5} and the definition of $M$, we have
$$
\sum_{w\in V(G)\setminus N_G[q]}x_w\le(k-1)\max\left\{1,\frac{n-k-s}{\rho+1-s}\right\}M.
$$
Since $s\le k-1$ and $n-2k+1<\rho\le n-k$, for sufficiently large $n$ we have
\begin{eqnarray}
\rho(M-m)&=&\frac{\rho+k}{\rho+1}(\rho+1)(M-m)-k(M-m)\nonumber\\
&\le&\frac{(k-1)(\rho+k)}{\rho+1}\max\left\{1,\frac{n-k-s}{\rho+1-s}\right\}M-k(M-m)\nonumber\\
&=&\bigl(k-1+O_k(n^{-1})\bigr)M-k(M-m)\nonumber\\
&<&kM-k(M-m)=km.\label{eq8a}
\end{eqnarray}
Therefore, we get $M/m<1+k/\rho$  and $m/M>\rho/(\rho+k)>1-k/\rho$, which proves (\ref{eq2}).
Since $X^TX=1$, we have $(n-s)m^2\le1$, which implies $m\le1/\sqrt{n-s}$.
Combining this with  \eqref{eq8a}, we have
$$ 0\le M-m<\frac{km}{\rho}\le\frac{k}{\rho\sqrt{n-s}},$$
which proves (\ref{eq1}).
\end{proof}

\begin{proof}[Proof of Lemma \ref{lemma5}]
Suppose $G\in\mathcal{G}(n,\Delta)$.  Let $\rho,X,M,m$ be the same notation as in Lemma \ref{lemma6}, $V:=V(G)$, $\alpha:=\frac{M}{m}$,
 $x(S):=\sum_{u\in S}x_u$ and $x(T):=\sum_{u\in T}x_u$.
Recall that
$$
\begin{aligned}
&S=\{v\in V:d(v)<\Delta\},\qquad T=\{v\in V:d(v)=\Delta\},\qquad s=|S|,\\
&S'=\{v\in V:s\le d(v)<\Delta\},\qquad s_1=|S'|,\qquad s_2=s-s_1.
\end{aligned}
$$
By \eqref{eqh1}, we have $N_T(u)=\emptyset$ for all $u\in S\setminus S'$.
By Lemma \ref{lemma4}, $s\le k-1$.
By Lemma  \ref{lemma1}, we can apply Lemma \ref{lemma6}  to $G$ to get \eqref{eq1},\eqref{eq2} and \eqref{eq3}.

For a graph $H$, the notation $H+uv-ab$ means that the edge $uv$ is added and the edge $ab$ is deleted; notation with more signed edge pairs is interpreted analogously.

We break the proof into the following claims.

\begin{claim}\label{claim1}
$|d(u)-d(v)|\le k$ for all $u,v\in S'$.
\end{claim}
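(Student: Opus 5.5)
We argue by contradiction using an edge-switching construction that increases the spectral radius. Suppose there exist $u,v\in S'$ with $d(u)-d(v)\ge k+1$. By Lemma \ref{lemma1} and \eqref{eq3}, $N_T(v)\subsetneq N_T(u)$. The set $N_T(u)\setminus N_T(v)$ then has at least $k+1$ elements, and \eqref{eq3} gives
\[
(\rho+1)(x_u-x_v)\ge (k+1)m .
\]
The plan is to move two edges from $u$ to $v$. This keeps $d(u)-2\ge d(v)+2$ (or at least keeps both degrees below $\Delta$ and $\ge s$), so the degree sequence stays nonregular with maximum degree $\Delta$, and we will show $\rho$ strictly increases. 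That contradicts $G\in\mathcal{G}(n,\Delta)$.

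\emph{Choosing the switch.} Pick $w_1,w_2\in N_T(u)\setminus N_T(v)$ and form
\[
G'=G-uw_1-uw_2+vw_1+vw_2 .
\]
Every vertex of $T$ keeps degree $\Delta$, the vertex $u$ loses $2$ and $v$ gains $2$. Since $d(v)+2\le d(u)-k+1<\Delta$, the vertex $v$ stays in $S$, and $u$ remains of degree $<\Delta$ and at least $d(v)+k-1\ge s$. Thus $G'$ is nonregular with maximum degree $\Delta$. It is connected by Lemma \ref{connect}, since $G'[S]=G[S]$ is complete, $|S|\le k-1$, and $u$ still has a neighbor in $T$ (because $d(u)-2\ge s$).

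\emph{Rayleigh quotient comparison.} By Lemma \ref{tv},
\[
\rho(G')-\rho(G)\ \ge\ X^T\bigl(A(G')-A(G)\bigr)X \;=\;2\bigl(x_{w_1}+x_{w_2}\bigr)(x_v-x_u),
\]
which has the wrong sign. So the naive switch from the larger to the smaller vertex does not work directly. The correct direction is to make the configuration less balanced. I would therefore reverse the roles: move edges from $v$ to $u$, that is, take $w\in N_T(v)$ and some $w'\in T\setminus N_T(u)$, and perform the switch
\[
G''=G-vw+uw'-\ldots .
\]
This has to be done while keeping all degrees of $T$ equal to $\Delta$, which forces compensating edges inside $T$. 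The cleanest version I expect is a $2$-switch involving an edge $ab$ inside $T$: delete $ab$ and $vw$, and add edges so that $a,b$ each get a neighbor among $u,v$. This preserves the degree sequence up to swapping degrees between $u$ and $v$.

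\emph{Controlling the error terms.} Here the estimate \eqref{eq2} enters: all $x_w$ for $w\in T$ lie in $[m,M]$ with $M/m<1+k/\rho$. The gain from the switch involves $(x_u-x_v)$ times a $T$-entry, which is at least $\frac{(k+1)m^2}{\rho+1}$. The loss from the $T$-edge rearrangement involves differences $M^2-m^2=O(km^2/\rho)$, with a constant that should be at most $k\,m^2/\rho$ by \eqref{eq1}. So the gap $d(u)-d(v)\ge k+1$ is exactly what makes the gain beat the loss.

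\emph{Main obstacle.} The hardest step is finding the right switch that keeps the maximum degree $\Delta$ and the graph in $\mathcal{C}(n,\Delta)$ while producing a positive first-order change. The sign bookkeeping above shows the choice is delicate. I would first try the $T$-internal $2$-switch: choose $a\in N_T(u)\setminus N_T(v)$ and $b\in T$ nonadjacent to $u$ with $ab\in E$. Such a $b$ exists because $a$ has degree $n-k$ and only $k-1$ nonneighbors, while $u$ misses at least $n-k-d(u)$ vertices of $T$. Then use $G-ab-va'+\ldots$, and compare using \eqref{eq3} together with the $k/\rho$ bounds of Lemma \ref{lemma6}, checking that the strict inequality in \eqref{eq8a} is what delivers the threshold $k$.
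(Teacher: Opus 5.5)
\textbf{There is a genuine gap: you never construct an admissible switch.} The whole argument rests on that switch, so the proof is missing its core.

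Your first switch has the wrong sign, as you noticed. The switch you describe next (delete $ab$ and $vw$, give $a$ and $b$ new neighbours among $u,v$) does not preserve degrees. The vertices $u,v$ gain two edges and lose one, so $w$ drops to degree $\Delta-1$. The configuration in your last paragraph ($a\in N_T(u)\setminus N_T(v)$, $b\in T\setminus N(u)$, $ab\in E$) is left as ``$G-ab-va'+\ldots$''. To close it you would need some $c\in N_T(v)$ nonadjacent to $a$. Nothing guarantees one, since $a$ has only $k-1$ nonneighbours and $N_T(v)$ may be tiny.

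The missing idea is to fix the two endpoints first and let degree-equality produce the middle vertex:
\begin{itemize}
\item Reduce to $u$ of maximum degree and $v$ of minimum degree in $S'$. You work with an arbitrary pair, but this extremal choice is used below.
\item Take $a\in T\setminus N(u)$ and $c\in N_T(v)\subseteq N_T(u)$. Then $u,v\in N(c)\setminus N(a)$.
\item Since $d(a)=d(c)$, we have $|N(a)\setminus N(c)|=|N(c)\setminus N(a)|\ge 2$, so there is $b\in N(a)\setminus N(c)$ with $b\neq c$.
\item Moreover $b\in T$. If $b\in S$, then $b\in S'$ and $d(b)\le d(u)$, so $N_T(b)\subseteq N_T(u)$ by Lemma \ref{lemma1}, contradicting $a\in N_T(b)\setminus N_T(u)$.
\end{itemize}
Now $G'=G+ua-ab+bc-cv$ keeps $a,b,c$ at degree $\Delta$ and moves a single edge from $v$ to $u$. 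It lies in $\mathcal{C}(n,\Delta)$ by Lemma \ref{connect}: either $ua$ or the unchanged edge $uv$ joins a deficient vertex to a vertex of degree $\Delta$. Moving two edges is unnecessary.

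\textbf{Your error bookkeeping also needs correcting.} You say the loss is of size $M^2-m^2$ with constant at most $km^2/\rho$. By \eqref{eq1}, $M^2-m^2=(M-m)(M+m)$ is about $2km^2/\rho$. That would only give $d(u)-d(v)\lesssim 2k$, not $k$.

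With the switch above, Lemma \ref{tv}, Lemma \ref{lemma3} ($x_b>x_u,x_v$), $x_a,x_b\le M$ and $x_c\ge m$ give
$0\ge x_ux_a-x_ax_b+x_bx_c-x_cx_v\ge (m-M)x_b+Mx_u-mx_v\ge m(x_u-x_v)-M(M-m)$.
So the loss is $M(M-m)<\alpha k m^2/\rho$, where $\alpha=M/m$. Combining this with the left inequality of \eqref{eq3} gives $m^2(d(u)-d(v))\le(\rho+1)M(M-m)$. Hence $d(u)-d(v)\le(\rho+1)\alpha(\alpha-1)<k(1+1/\rho)(1+k/\rho)<k+1$ by \eqref{eq2}, which is the required threshold.
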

\begin{proof}
If $|S'|\le1$, the claim is trivial. Otherwise, choose $u,v\in S'$ such that $d(u)=\max_{w\in S'}d(w)$ and $d(v)=\min_{w\in S'}d(w)$.
If $d(u)=d(v)$, there is nothing to prove, so we assume that $d(u)>d(v)$. It suffices to show that $d(u)-d(v)<k+1$.

By Lemma \ref{lemma1} and the definition of $S'$, there exist $a,c\in T$ such that $u\nsim a$ and $v\sim c$.
Since $d(u)\ge d(v)$, by Lemma  \ref{lemma1}  we have
\begin{equation}\label{eq7}
N_T(v)\subseteq N_T(u).
\end{equation}
Consequently, $a\notin N_T(v)$ and $c\in N_T(u)$, so $u,v\in N(c)\setminus N(a)$.
Since $d(a)=d(c)$, the sets $N(a)\setminus N(c)$ and $N(c)\setminus N(a)$ have the same cardinality.
Thus $|N(a)\setminus N(c)|\ge2$, and we may choose $b\in N(a)\setminus N(c)$ with $b\ne c$; automatically, $b\notin\{u,v,a\}$.
We claim that $b\in T$. Otherwise,   $b\in S$ implies $d(b)\ge s$, as $b\in N(a)$. Then by the choice of $u$ and the definition of $S'$ we have $d(b)\le d(u)$.
Lemma  \ref{lemma1} then give $N_T(b)\subseteq N_T(u)$, contradicting $a\in N_T(b)\setminus N_T(u)$.

Consider \textbf{Operation~1}: $\boldsymbol{G'=G+ua-ab+bc-cv}$; see Figure \ref{fig:1}.
In $G'$, the vertices $a,b,c$ retain degree $\Delta$, while $u$ gains one edge and $v$ loses one edge; hence $\Delta(G')=\Delta$, and $v$ remains deficient.
The deficient vertices form a subset of $S$, and since no edge inside $S$ is changed, they induce a complete graph of order at most $k-1$.
If $u$ remains deficient, then $ua$ joins a deficient vertex to a vertex of degree $\Delta$; if $u$ reaches degree $\Delta$, then the unchanged edge $uv$ does so.
Hence Lemma \ref{connect} shows that $G'$ is connected. Thus $G'\in\mathcal{C}(n,\Delta)$, and the extremality of $G$ gives $\rho(G')-\rho(G)\le0$.

\begin{figure}[H]
	\centering
    \begin{subfigure}{.4\textwidth}
	\centering
	\begin{tikzpicture}
	    \draw  (-1,0) ellipse [x radius=0.6cm, y radius=1cm];
		\draw  (1.5,0) ellipse [x radius=1.2cm, y radius=1.2cm];
        \draw(-1,0.5)[fill] circle[radius=0.5mm];
        \draw(-1,-0.5)[fill] circle[radius=0.5mm];
        \draw(1,0.5)[fill] circle[radius=0.5mm];
        \draw(1,-0.5)[fill] circle[radius=0.5mm];
        \draw(2,0)[fill] circle[radius=0.5mm];
        \draw (1,0.5)--(2,0);\draw (-1,-0.5)--(1,-0.5);
        \draw [dashed](1,-0.5)--(2,0);\draw[dashed](-1,0.5)--(1,0.5);
        \node at(-1,0.5)[left]{$u$};
        \node at(-1,-0.5)[left]{$v$};
        \node at(1,0.55)[right]{$a$};
        \node at(2,0)[above]{$b$};
        \node at(1,-0.55)[right]{$c$};
		\node at(0,-1.5)[above]{$G$};
	\end{tikzpicture}
    \end{subfigure}
    \begin{subfigure}{.4\textwidth}
	\centering
	\begin{tikzpicture}
	    \draw (-1,0) ellipse [x radius=0.6cm, y radius=1cm];
		\draw (1.5,0) ellipse [x radius=1.2cm, y radius=1.2cm];
        \draw (-1,0.5)[fill] circle[radius=0.5mm];
        \draw (-1,-0.5)[fill] circle[radius=0.5mm];
        \draw (1,0.5)[fill] circle[radius=0.5mm];
        \draw (1,-0.5)[fill] circle[radius=0.5mm];
        \draw (2,0)[fill] circle[radius=0.5mm];
        \draw [dashed](1,0.5)--(2,0);\draw [dashed](-1,-0.5)--(1,-0.5);
        \draw (1,-0.5)--(2,0);\draw(-1,0.5)--(1,0.5);
        \node at(-1,0.5)[left]{$u$};
        \node at(-1,-0.5)[left]{$v$};
        \node at(1,0.55)[right]{$a$};
        \node at(2,0)[above]{$b$};
        \node at(1,-0.55)[right]{$c$};
		\node at(0,-1.5)[above]{$G'$};
	\end{tikzpicture}
    \end{subfigure}
	\caption{Operation 1}
	\label{fig:1}
\end{figure}
Since $a,b,c\in T$, Lemma \ref{lemma3} and the definitions of $m,M$ give $x_b>\max\{x_u,x_v\}$, $x_a,x_b\le M$, and $x_c\ge m$.
By Lemma \ref{tv}, we have
\begin{equation}\label{eq8b}
\begin{aligned}
\frac{\rho(G')-\rho(G)}{2}&\ge x_ux_a-x_ax_b+x_bx_c-x_cx_v\\
&=-x_a(x_b-x_u)+x_c(x_b-x_v)\ge -M(x_b-x_u)+m(x_b-x_v)\\
&=(m-M)x_b+(Mx_u-mx_v)\ge (m-M)M+(Mx_u-mx_v)\\
&\ge (m-M)M+m(x_u-x_v).
\end{aligned}
\end{equation}
Since $\rho(G')-\rho(G)\le0$, (\ref{eq8b}) implies
\begin{equation}\label{eq9}
m(x_u-x_v)\le M(M-m).
\end{equation}
Multiplying the left inequality in (\ref{eq3}) by $m$ and using (\ref{eq9}), we obtain
\begin{equation}\label{eq10}
m^2(d(u)-d(v))\le (\rho+1)m(x_u-x_v)\le (\rho+1)M(M-m).
\end{equation}
Since  $\rho\to\infty$ as $n\to\infty$, by (\ref{eq2}) and (\ref{eq10}), we have
\begin{equation}\label{eq11}
d(u)-d(v)\le (\rho+1)\alpha(\alpha-1)<(\rho+1)\frac{k}{\rho}(1+\frac{k}{\rho})<k+1
\end{equation}
for sufficiently large $n$.
Since $d(u)-d(v)$ is an integer, we conclude that $d(u)-d(v)\le k$. This proves the claim.
\end{proof}

\begin{claim}\label{claim2}
If $d(G)\neq (\Delta^{n-2},(\Delta-1)^2)$, then   $x_u+x_v\le M$  for all distinct vertices $u,v\in S$.
\end{claim}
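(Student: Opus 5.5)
The plan is to argue by contradiction with a single edge-switching operation. Suppose $d(G)\neq(\Delta^{n-2},(\Delta-1)^2)$ and there are distinct $u,v\in S$ with $x_u+x_v>M$. I will find $a,b\in T$ with $a\sim b$, $a\nsim u$ and $b\nsim v$. Then I consider
\[
G'=G+ua+vb-ab.
\]
The goal is to show that $G'\in\mathcal{C}(n,\Delta)$ and $\rho(G')>\rho(G)$, which contradicts the extremality of $G$.

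\emph{Existence of $a,b$.} By Lemma \ref{lemma1}, $G[S]$ is complete, so every non-neighbour of $u$ lies in $T$. Hence $A:=T\setminus N(u)$ has $n-1-d(u)\ge k$ elements, and likewise $B:=T\setminus N(v)$ has at least $k$ elements. Pick any $a\in A$. Since $a\in T$, it has exactly $k-1$ non-neighbours other than itself, and $u$ is one of them (as $u\notin B$). So $a$ has at most $k-2$ non-neighbours in $B\setminus\{a\}$. Since $|B\setminus\{a\}|\ge k-1$, there is some $b\in B\setminus\{a\}$ with $a\sim b$. Then $ua$, $vb$ are non-edges and $ab$ is an edge, so $G'$ is well defined.

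\emph{Membership in $\mathcal{C}(n,\Delta)$.} The vertices $a,b$ keep degree $\Delta$, and $u,v$ each gain one edge, so no degree exceeds $\Delta$. Thus $\Delta(G')=\Delta$, unless $G'$ becomes regular. Regularity of $G'$ would force $S=\{u,v\}$ with $d(u)=d(v)=\Delta-1$, which is exactly the excluded degree sequence. Hence $G'$ is nonregular, and in particular $S_{G'}\neq\emptyset$. Moreover $S_{G'}\subseteq S$, so $G'[S_{G'}]$ is complete of order at most $k-1$. It remains to exhibit an edge between $S_{G'}$ and $T_{G'}$:
\begin{itemize}
\item if $u\in S_{G'}$, the edge $ua$ joins $S_{G'}$ to $T_{G'}$;
\item otherwise $u\in T_{G'}$, and $u$ is adjacent to every vertex of $S_{G'}\subseteq S$.
\end{itemize}
Lemma \ref{connect} then gives that $G'$ is connected, so $G'\in\mathcal{C}(n,\Delta)$.

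\emph{Spectral comparison.} By Lemma \ref{tv} applied with the Perron vector $X$ of $G$,
\[
\frac{\rho(G')-\rho(G)}{2}\ge x_ux_a+x_vx_b-x_ax_b .
\]
Since $a,b\in T$, we have $x_a,x_b\le M$, and so
\[
x_ux_a+x_vx_b\ge\min\{x_a,x_b\}(x_u+x_v)>\min\{x_a,x_b\}\,M\ge x_ax_b .
\]
Hence $\rho(G')>\rho(G)$, contradicting $G\in\mathcal{G}(n,\Delta)$. Therefore $x_u+x_v\le M$ for all distinct $u,v\in S$.

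The only delicate point is the membership step: confirming that $G'$ is nonregular and connected. This is precisely where the exceptional degree sequence $(\Delta^{n-2},(\Delta-1)^2)$ must be excluded. Everything else (the existence of $a,b$ and the eigenvector inequality) is a direct count or a one-line estimate.
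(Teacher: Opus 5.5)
Your proof is correct, and it takes a genuinely different, lighter route than the paper.

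\textbf{The paper's route.} It first reduces to the pair $u,v$ of the two largest degrees in $S$. By the nesting in Lemma \ref{lemma1}, a vertex $a\in T\setminus N_T(u)$ is then a common non-neighbour of $u$ and $v$ with $N(a)\subseteq T$. It uses the connectivity of $G$ to find non-adjacent $b,c\in N(a)$, and applies Operation~2, $G+ua+va-ab-ac+bc$. The estimate then needs Lemma \ref{lemma3} ($x_b>x_u$, $x_c>x_v$) to make $x_b+x_c-x_u-x_v$ nonnegative before replacing $x_a$ by $M$. Finally, the inequality is transferred from the top pair to all pairs by monotonicity of $x$ on $S$.

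\textbf{Your route.} Your three-edge switch $G+ua+vb-ab$ handles an arbitrary pair directly. The edge $ab$ comes from pure counting: each vertex of $T$ misses only $k-1$ others, one of which is $u$, while $|T\setminus N(v)|\ge k$. The eigenvector estimate uses only positivity and $x_a,x_b\le M$. You need neither Lemma \ref{lemma3}, nor the nesting of neighbourhoods, nor connectivity of $G$ to locate the switch.

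The feasibility check is the same in both arguments: $G'$ is nonregular precisely because $(\Delta^{n-2},(\Delta-1)^2)$ is excluded, and it is connected by Lemma \ref{connect}. One small wording point: $\Delta(G')=\Delta$ holds unconditionally, since the vertices of $T$ keep degree $\Delta$; it is not ``unless $G'$ becomes regular.''

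\textbf{What each buys.} The paper's version is mainly uniform with the rest of the argument: Operation~2 is the exact reverse of Operation~4 in Claim \ref{claim5}, so the upper bound on $x_u+x_v$ and the lower bound in Claim \ref{claim5} come from mirror-image switches. Yours is shorter and more self-contained.
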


\begin{proof}
If $|S|\le1$, the claim is vacuous. Otherwise, choose distinct vertices $u,v\in S$ such that $d(u)\ge d(v)\ge d(w)$ for all $w\in S\setminus\{u,v\}$.
By Lemma \ref{lemma1}, we have
\begin{equation}\label{eq15}
N_T(w)\subseteq N_T(v)\subseteq N_T(u)\quad\text{and}\quad x_w\le x_v\le x_u\quad\text{for all}\quad w\in S\setminus\{u,v\}.
 \end{equation}
It suffices to prove the inequality for this pair.  Since $G[S]$ is complete and $u$ is deficient, $u$ is not adjacent to every vertex of $T$.
Choose $a\in T\setminus N_T(u)$. Then by \eqref{eq15} we have $N(a)\cap S=\emptyset$, i.e., $N(a)\subseteq T$. Since $d(a)=\Delta$ and $G$ is connected, $N(a)$ contains distinct $b,c\in N(a)\subseteq T$ such that $b\nsim c$.

Consider \textbf{Operation~2}: $\boldsymbol{G'=G+ua+va-ab-ac+bc}$; see Figure \ref{fig:2}.
\begin{figure}[H]
	\centering
    \begin{subfigure}{.4\textwidth}
	\centering
	\begin{tikzpicture}
	    \draw  (-1,0) ellipse [x radius=0.6cm, y radius=1cm];
		\draw  (1.5,0) ellipse [x radius=1.2cm, y radius=1.2cm];
        \draw(-1,0.5)[fill] circle[radius=0.5mm];
        \draw(-1,-0.5)[fill] circle[radius=0.5mm];
        \draw(2,0.5)[fill] circle[radius=0.5mm];
        \draw(2,-0.5)[fill] circle[radius=0.5mm];
        \draw(1,0)[fill] circle[radius=0.5mm];
        \draw [dashed](-1,0.5)--(1,0);\draw [dashed](-1,-0.5)--(1,0);\draw [dashed](2,0.5)--(2,-0.5);
        \draw (1,0)--(2,0.5);\draw(1,0)--(2,-0.5);
        \node at(-1,0.5)[left]{$u$};
        \node at(-1,-0.5)[left]{$v$};
        \node at(2,0.5)[right]{$b$};
        \node at(1,0)[above]{$a$};
        \node at(2,-0.5)[right]{$c$};
		\node at(0,-1.5)[above]{$G$};
	\end{tikzpicture}
    \end{subfigure}
    \begin{subfigure}{.4\textwidth}
	\centering
	\begin{tikzpicture}
	    \draw  (-1,0) ellipse [x radius=0.6cm, y radius=1cm];
		\draw  (1.5,0) ellipse [x radius=1.2cm, y radius=1.2cm];
        \draw(-1,0.5)[fill] circle[radius=0.5mm];
        \draw(-1,-0.5)[fill] circle[radius=0.5mm];
        \draw(2,0.5)[fill] circle[radius=0.5mm];
        \draw(2,-0.5)[fill] circle[radius=0.5mm];
        \draw(1,0)[fill] circle[radius=0.5mm];
        \draw (-1,0.5)--(1,0);\draw (-1,-0.5)--(1,0);\draw (2,0.5)--(2,-0.5);
        \draw [dashed](1,0)--(2,0.5);\draw[dashed](1,0)--(2,-0.5);
        \node at(-1,0.5)[left]{$u$};
        \node at(-1,-0.5)[left]{$v$};
        \node at(2,0.5)[right]{$b$};
        \node at(1,0)[above]{$a$};
        \node at(2,-0.5)[right]{$c$};
		\node at(0,-1.5)[above]{$G'$};
	\end{tikzpicture}
    \end{subfigure}
	\caption{Operation 2}
	\label{fig:2}
\end{figure}
In $G'$, the vertices $a,b,c$ retain degree $\Delta$, while each of $u$ and $v$ gains one edge; hence $\Delta(G')=\Delta$.
The deficient vertices form a subset of $S$, and since no edge inside $S$ is changed, they induce a complete graph of order at most $k-1$.
The graph $G'$ is nonregular; otherwise we have $S=\{u,v\}$ and $d(u)=d(v)=\Delta-1$, which contradicts the condition $d(G)\neq (\Delta^{n-2},(\Delta-1)^2)$.
If one of $u,v$ remains deficient, then $ua$ or $va$ joins a deficient vertex to a vertex of degree $\Delta$; if both reach degree $\Delta$, any remaining deficient vertex is adjacent to them because $G[S]$ is complete.
Hence Lemma \ref{connect} shows that $G'$ is connected. Thus $G'\in\mathcal{C}(n,\Delta)$, and the extremality of $G$ gives $\rho(G')-\rho(G)\le0$.

Since $a,b,c\in T$ and $u,v\in S$, by Lemma \ref{lemma3} we have $x_b>x_u$ and $x_c>x_v$. Moreover, $x_a,x_b,x_c\le M$.
By Lemma \ref{tv}, we obtain
\begin{equation*}\label{eq12}
\begin{aligned}
0\ge \frac{\rho(G')-\rho(G)}{2}&\ge x_ux_a+x_vx_a-x_bx_a-x_cx_a+x_bx_c\\
&=-x_a(x_b+x_c-x_u-x_v)+x_bx_c\\
&\ge -M(x_b+x_c-x_u-x_v)+x_bx_c\\
&=(M-x_b)(M-x_c)+M(x_u+x_v)-M^2\\
&\ge M(x_u+x_v)-M^2,
\end{aligned}
\end{equation*}
which leads to $x_u+x_v\le M$.
\end{proof}

\begin{claim}\label{claim3}
If $G$ has a vertex of degree $\Delta-1$, then $d(G)=(\Delta^{n-2},(\Delta-1)^2)$ or
$d(G)=(\Delta^{n-s},\Delta-1,(s-1)^{s-1})$ with $1\le s\le k-1$.
\end{claim}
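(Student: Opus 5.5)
The plan is to show that if some vertex has degree $\Delta-1$, then (outside the exceptional sequence $(\Delta^{n-2},(\Delta-1)^2)$) it is the \emph{only} vertex of $S'$. Every other vertex of $S$ then lies in $S\setminus S'$ and has degree $s-1$ by \eqref{eqh1}, which is exactly the form $(\Delta^{n-s},\Delta-1,(s-1)^{s-1})$ with $1\le s\le k-1$ (the bound on $s$ coming from Lemma \ref{lemma4}). The tool for this is Claim \ref{claim2}, combined with the fact that vertices of $S'$ have Perron entries close to $M$.

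Let $u$ be a vertex with $d(u)=\Delta-1$. Since $s\le k-1<\Delta-1$, we have $u\in S'$, and $u$ has the maximum degree in $S$. Assume $d(G)\neq(\Delta^{n-2},(\Delta-1)^2)$. Suppose, for contradiction, that there is a second vertex $v\in S'$, $v\ne u$. By Claim \ref{claim1}, $d(v)\ge d(u)-k=n-2k-1$. Claim \ref{claim2} applies, since we are excluding the exceptional sequence, and gives
\[
x_u+x_v\le M .
\]

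Next we bound $x_u$ and $x_v$ from below. The Perron equation at any $w\in S$, together with the completeness of $G[S]$, gives
\[
(\rho+1)x_w\ge \sum_{t\in N_T(w)}x_t\ge (d(w)+1-s)m .
\]
For $w\in\{u,v\}$ we have $d(w)+1-s\ge n-3k+1$, and $\rho+1\le n-k+1$. Hence
\[
x_w\ge \frac{n-3k+1}{n-k+1}\,m .
\]
By \eqref{eq2}, $m>(1-k/\rho)M$ with $\rho>n-2k+1$. Combining these,
\[
x_u+x_v\ge 2\Bigl(1-O_k(n^{-1})\Bigr)M>M
\]
for $n$ sufficiently large. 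This contradicts Claim \ref{claim2}. Therefore $S'=\{u\}$, and the claimed degree sequence follows as explained in the first paragraph.

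The only point needing care is the applicability of Claim \ref{claim2} when $S=\{u,v\}$ with both vertices of degree $\Delta-1$. That situation is precisely the excluded sequence $(\Delta^{n-2},(\Delta-1)^2)$, which is allowed by the statement, so the argument covers everything else. The main (mild) obstacle is keeping the lower bound on $x_v$ uniform. This works because Claim \ref{claim1} forces $d(v)$ to be within $k$ of $\Delta$, so all error terms are $O_k(1/n)$.
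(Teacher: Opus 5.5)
Your proof is correct and follows the paper's argument. Claim~\ref{claim1} gives $d(v)\ge \Delta-k-1$ for any second vertex $v\in S'$, and the Perron equation together with \eqref{eq2} then pushes $x_u+x_v$ above $M$, which contradicts Claim~\ref{claim2} unless $d(G)=(\Delta^{n-2},(\Delta-1)^2)$. The only difference is cosmetic: you merge the paper's two cases (a second vertex of degree $\Delta-1$, versus a second vertex of $S'$ of arbitrary degree) into one, which is legitimate because Claim~\ref{claim1} covers both.
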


\begin{proof}
Suppose that $G$ has a vertex of degree $\Delta-1$. For each $u\in S$ with $d(u)\ge \Delta-k-1=n-2k-1$, Lemmas \ref{lemma1} and \ref{lemma4} give $|N_T(u)|=d(u)-s+1\ge d(u)-k+2$.
For sufficiently large $n$, by (\ref{eq2}) and Lemma \ref{ad}, we have
\begin{equation}\label{eq13}
\begin{aligned}
x_u&=\frac{1}{\rho}\sum_{w\sim u}x_w\ge\frac{m}{\rho}|N_T(u)|\ge\frac{m}{\rho}(d(u)-k+2)\ge\frac{M}{\alpha\rho}(n-3k+1)\\
&>\left(1-\frac{k}{\rho}\right)\frac{n-3k+1}{\rho}M>\frac{M}{2}.
\end{aligned}
\end{equation}
If $G$ has at least two vertices of degree $\Delta-1$, choose distinct such vertices $u,v$. By (\ref{eq13}), $x_u+x_v>M$, so Claim \ref{claim2} forces $d(G)=(\Delta^{n-2},(\Delta-1)^2)$.

It remains to suppose that $u$ is the unique vertex of degree $\Delta-1$. If $S'\setminus\{u\}\neq\emptyset$, choose $v\in S'\setminus\{u\}$.
Claim \ref{claim1} gives $d(v)\ge d(u)-k=\Delta-k-1$, so (\ref{eq13}) gives $x_u+x_v>M$.
On the other hand, by Claim \ref{claim2} we have $x_u+x_v\le M$, a contradiction.
Consequently, $S'=\{u\}$, and  vertex of $S\setminus\{u\}$ then has degree $s-1$. Therefore, $d(G)=(\Delta^{n-s},\Delta-1,(s-1)^{s-1})$.
\end{proof}

\begin{claim}\label{claim4}
If $d(G)\not\in \{(\Delta^{n-2},(\Delta-1)^2),(\Delta^{n-1},\Delta-2)\}\cup\{(\Delta^{n-s},\Delta-1,(s-1)^{s-1}): 1\le s\le k-1\}$, then $x_u\le M/2$ for all $u\in S$.
\end{claim}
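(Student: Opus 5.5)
We argue by contradiction with a single edge-switching operation that raises the degree of $u$ by two, and use Lemma \ref{tv} to show that $\rho$ strictly increases whenever $x_u>M/2$.

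\textbf{Step 1: all deficient degrees are at most $\Delta-2$.} Assume $d(G)$ is none of the listed sequences. If $G$ had a vertex of degree $\Delta-1$, Claim \ref{claim3} would put $d(G)$ in the excluded family. Hence $d(w)\le\Delta-2$ for every $w\in S$.

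\textbf{Step 2: the operation.} Suppose, for a contradiction, that some $u\in S$ has $x_u>M/2$. Because $G[S]$ is complete (Lemma \ref{lemma1}), every nonneighbour of $u$ lies in $T$. Their number is
$$
n-1-d(u)\ge n-1-(\Delta-2)=k+1.
$$
A vertex $a\in T$ has only $k-1$ nonneighbours, so $a$ is adjacent to some other nonneighbour $c\in T$ of $u$. Set $G'=G+ua+uc-ac$.

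\textbf{Step 3: $G'\in\mathcal{C}(n,\Delta)$.}
\begin{itemize}
\item[(a)] The degrees of $a$ and $c$ are unchanged, and $d_{G'}(u)=d(u)+2\le\Delta$, so $\Delta(G')=\Delta$.
\item[(b)] The deficient vertices of $G'$ form a subset of $S$ with no edge inside $S$ changed. So they induce a complete graph on at most $k-1$ vertices (Lemma \ref{lemma4}).
\item[(c)] $G'$ is nonregular. It could be regular only if $S=\{u\}$ and $d(u)=\Delta-2$, which is the excluded sequence $(\Delta^{n-1},\Delta-2)$.
\item[(d)] There is an edge between the deficient set and $T_{G'}$. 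If $u$ is still deficient, use $ua$. Otherwise $u\in T_{G'}$ and any remaining deficient vertex is adjacent to $u$.
\end{itemize}
Lemma \ref{connect} now gives that $G'$ is connected, so $G'\in\mathcal{C}(n,\Delta)$. Extremality of $G$ gives $\rho(G')\le\rho(G)$.

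\textbf{Step 4: the contradiction.} By Lemma \ref{tv} applied to $X$, and since $x_a\le M$ and $x_c\le M$,
$$
\frac{\rho(G')-\rho(G)}{2}\ge x_ux_a+x_ux_c-x_ax_c = x_ux_c-x_a(x_c-x_u).
$$
Lemma \ref{lemma3} gives $x_c>x_u$, so $x_c-x_u>0$ and we may bound $x_a\le M$:
$$
x_ux_c-x_a(x_c-x_u)\ge x_ux_c-M(x_c-x_u)=x_u(x_c+M)-Mx_c.
$$
Using $x_u>M/2$ and then $x_c\le M$,
$$
x_u(x_c+M)-Mx_c>\frac{M}{2}(x_c+M)-Mx_c=\frac{M}{2}(M-x_c)\ge0.
$$
Thus $\rho(G')>\rho(G)$, contradicting extremality. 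Hence $x_u\le M/2$ for all $u\in S$.

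The step needing the most care is Step 3: the excluded sequences are exactly what rules out $u$ with $d(u)=\Delta-1$ and rules out $G'$ becoming regular. The spectral estimate in Step 4 is direct once the sign of $x_c-x_u$ is used to replace $x_a$ by $M$.
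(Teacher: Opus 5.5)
Your proposal is correct and takes essentially the paper's route: the same switching $G'=G+ua+uc-ac$ (Operation~3) with the same Rayleigh-quotient estimate from Lemma~\ref{tv} and Lemma~\ref{lemma3}, argued by contradiction rather than directly. The one difference is how you find the edge $ac$: you count directly ($u$ has at least $k+1$ nonneighbours, all in $T$, while $a$ misses only $k-1$ vertices), which works for every $u\in S$ at once, whereas the paper first reduces to a vertex of maximum degree in $S$ and uses the nesting of the sets $N_T(w)$ to rule out $T\setminus N(u)$ being independent.
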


\begin{proof}
Suppose that $d(G)$ is not one of the  exceptional degree sequences in the claim. By Claim \ref{claim3}, each vertex in $S$ has degree at most $\Delta-2$.
By Lemma \ref{lemma1}, we only need to prove $x_u\le M/2$ for $d(u)=\max\{d(w):w\in S\}$.
Let $T_1=T\setminus N(u)$. By Lemma \ref{lemma1}, we have $|T_1|\ge 2$.

If $G[T_1]$ is an empty graph, then each vertex $a\in T_1$ has no neighbor in $S$, since otherwise Lemma \ref{lemma1} would imply $a\in N_T(u)$, which contradicts $a\in T_1$. Thus all $\Delta$ neighbors of $a$ would lie in $N_T(u)$, but $|N_T(u)|=d(u)-s+1<\Delta$, a contradiction. Therefore,
$G[T_1]$ is nonempty and we can select $a,b\in T_1$ such that $a\nsim u$, $b\nsim u$, and $a\sim b$.

Consider \textbf{Operation~3}: $\boldsymbol{G'=G+ua+ub-ab}$; see Figure \ref{fig:3}. It is clear that $G'$ is connected.
In $G'$, the vertices $a,b$ retain degree $\Delta$, while $u$ gains two edges; hence $\Delta(G')=\Delta$.
Since $d(G)\ne(\Delta^{n-1},\Delta-2)$, $G'$ is nonregular.
Therefore,  $G'\in\mathcal{C}(n,\Delta)$, and the extremality of $G$ gives $\rho(G')-\rho(G)\le0$.
Moreover, we have
\begin{equation}\label{eq14}
\begin{aligned}
\frac{\rho(G')-\rho(G)}{2}&\ge x_u(x_a+x_b)-x_ax_b=x_u^2-(x_a-x_u)(x_b-x_u)\\
&\ge x_u^2-(M-x_u)^2=2Mx_u-M^2,
\end{aligned}
\end{equation}
which leads to $x_u\le M/2$.
\end{proof}
\begin{figure}[H]
	\centering
    \begin{subfigure}{.4\textwidth}
	\centering
	\begin{tikzpicture}
	    \draw  (-1,0) ellipse [x radius=0.6cm, y radius=1cm];
		\draw  (1,0) ellipse [x radius=0.6cm, y radius=1.2cm];
        \draw(-1,0)[fill] circle[radius=0.5mm];
        \draw(1,0.5)[fill] circle[radius=0.5mm];
        \draw(1,-0.5)[fill] circle[radius=0.5mm];
        \draw [dashed](-1,0)--(1,0.5);\draw [dashed](-1,0)--(1,-0.5);\draw(1,0.5)--(1,-0.5);
        \node at(-1,0)[left]{$u$};
        \node at(1,0.5)[right]{$a$};
        \node at(1,-0.5)[right]{$b$};
		\node at(0,-1.5)[above]{$G$};
	\end{tikzpicture}
    \end{subfigure}
    \begin{subfigure}{.4\textwidth}
	\centering
	\begin{tikzpicture}
	    \draw  (-1,0) ellipse [x radius=0.6cm, y radius=1cm];
		\draw  (1,0) ellipse [x radius=0.6cm, y radius=1.2cm];
        \draw (-1,0)[fill] circle[radius=0.5mm];
        \draw (1,0.5)[fill] circle[radius=0.5mm];
        \draw (1,-0.5)[fill] circle[radius=0.5mm];
        \draw (-1,0)--(1,0.5);\draw(-1,0)--(1,-0.5);\draw[dashed](1,0.5)--(1,-0.5);
        \node at(-1,0)[left]{$u$};
        \node at(1,0.5)[right]{$a$};
        \node at(1,-0.5)[right]{$b$};
		\node at(0,-1.5)[above]{$G'$};
	\end{tikzpicture}
    \end{subfigure}
	\caption{Operation 3}
	\label{fig:3}
\end{figure}

\begin{claim}\label{claim5}
If $d(G)\not\in\{ (\Delta^{n-s},s^2,(s-1)^{s-2}): 2\le s\le k-1\}$, then $x_u+x_v\ge m-(M-m)^2/m$  for all distinct vertices $u,v\in S'$.
\end{claim}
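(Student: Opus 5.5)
The plan is to run the reverse of Operation~2. Fix distinct $u,v\in S'$. Since $u,v\in S'$, both have degree at least $s$, so $N_T(u),N_T(v)\neq\emptyset$. By Lemma~\ref{lemma1} we may assume $N_T(v)\subseteq N_T(u)$. Choose $a\in N_T(v)$, so that $a\sim u$ and $a\sim v$. Then look for $b,c\in T\setminus N[a]$ with $b\sim c$.

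\textbf{Operation~4}: $G'=G-ua-va+ab+ac-bc$. In $G'$ the vertices $a,b,c$ keep degree $\Delta$, and $u,v$ each lose one edge, so $\Delta(G')=\Delta$ and $G'$ is nonregular. The deficient set is still $S$, which is complete of order at most $k-1$. For connectivity we want Lemma~\ref{connect}, so we must check $\partial(S,T)\neq\emptyset$ in $G'$. This holds unless the only edges between $S$ and $T$ in $G$ are $ua$ and $va$. In that case every $w\in S\setminus\{u,v\}$ has $N_T(w)=\emptyset$, hence $d(w)=s-1$, and $N_T(u)=N_T(v)=\{a\}$, hence $d(u)=d(v)=s$. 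That is exactly the excluded sequence $(\Delta^{n-s},s^2,(s-1)^{s-2})$; note $s\ge2$ because $u\ne v$. So outside the exception $G'$ is connected, $G'\in\mathcal{C}(n,\Delta)$, and $\rho(G')\le\rho(G)$.

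By Lemma~\ref{tv},
$$0\ge\frac{\rho(G')-\rho(G)}{2}\ge -x_a(x_u+x_v)+x_a(x_b+x_c)-x_bx_c .$$
Rearranging,
$$x_u+x_v\ge x_b+x_c-\frac{x_bx_c}{x_a}=x_a-\frac{(x_a-x_b)(x_a-x_c)}{x_a}.$$
Since $a,b,c\in T$, we have $x_a,x_b,x_c\in[m,M]$, so $|(x_a-x_b)(x_a-x_c)|\le(M-m)^2$. Together with $x_a\ge m$ this gives $x_u+x_v\ge m-(M-m)^2/m$, which is the claim.

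The main obstacle is showing that $b,c$ exist, i.e.\ that $G[T\setminus N[a]]$ contains an edge for a suitable $a\in N_T(v)$. The vertex $a$ has exactly $k-1$ non-neighbours. I would argue by contradiction. Suppose $T_a:=T\setminus N[a]$ is independent for every $a\in N_T(v)$.

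First I would show $T_a\neq\emptyset$ for some such $a$. Otherwise $a$ is nonadjacent only to vertices of $S$, at most $k-1$ of them, which forces $s=k-1$ and $N_S(a)=\emptyset$; this contradicts $a\sim u$. (Alternatively, the structure $N_T(w)\subseteq N_T(v)$ from Lemma~\ref{lemma1} would give a contradiction.)

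Next take $b\in T_a$. Its non-neighbours include $T_a\setminus\{b\}$, $a$, and the vertices of $S$ not adjacent to $b$. Comparing $d(a)=d(b)=\Delta$ and using $N(a)\setminus N(b)\ne\emptyset$, it should be possible to pick a different $a$, or to swap the roles of $a$ and $b$, so that an edge appears. If that comparison does not close cleanly, I would use a weaker fallback operation that only needs $b\in T_a$ and some $c\in N(b)\setminus N[a]$, and take the error terms from \eqref{eq1} and \eqref{eq2}. I expect this combinatorial existence step to be the delicate part. The eigenvector inequality itself is routine.
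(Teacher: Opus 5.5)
Your Operation~4 step is correct. The connectivity check isolates exactly the excluded sequence, and the bound $x_u+x_v\ge x_a-(x_a-x_b)(x_a-x_c)/x_a\ge m-(M-m)^2/m$ is valid; it does not even need Lemma~\ref{lemma3}. The genuine gap is the step you flag yourself: you never produce an edge $bc$ inside $T\setminus N[a]$. No degree-counting contradiction can produce one, because the edgeless case really occurs under every structural hypothesis you are using.

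For example, let $S=\{u,v\}$ with $N_T(u)=N_T(v)=A$. Let $G[T]$ be complete multipartite, with parts of size $k$ inside $A$ and parts of size $k-2$ inside $T\setminus A$. Then:
\begin{itemize}
\item every vertex of $T$ has degree $n-k$;
\item $G[S]$ is complete and the neighborhoods are nested;
\item the degree sequence $(\Delta^{n-2},(|A|+1)^2)$ is not the excluded one, since $|A|\ge k$.
\end{itemize}
Yet for every $a\in A$ the set $T\setminus N[a]$ is the rest of the part of $a$, which is independent. Swapping $a$ with some $b\in T\setminus N[a]$ does not help, since then $b\in A$ as well. Your fallback, which needs some $c\in N(b)\setminus N[a]$, also fails, because here $N(b)=N(a)$. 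So this case must be handled by a different operation or excluded using extremality, and your proposal does neither.

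The missing idea, as in the paper, goes as follows.
\begin{itemize}
\item First reduce, via the monotonicity in Lemma~\ref{lemma1}, to the pair where $v$ and $u$ are the two vertices of smallest degree in $S'$.
\item Then $a\in N_T(v)\subseteq N_T(z)$ for every $z\in S'$. Hence $S\setminus N(a)\subseteq S\setminus S'$, and these vertices have no neighbors in $T$.
\item If $G[T\setminus N(a)]$ is edgeless, each $b\in T\setminus N[a]$ has all $\Delta$ of its neighbors in $N(a)$, so $N(b)=N(a)$.
\item Such a $b$ exists: since $u,v\in N(a)$, at most $s-2\le k-3$ of the $k-1$ nonneighbors of $a$ lie in $S$.
\item Apply Operation~5, $G'=G-va-vb+ab$. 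It keeps $ua$ and $ub$, so $G'$ is connected by Lemma~\ref{connect}.
\item Lemma~\ref{tv} with Lemma~\ref{lemma3} gives $0\ge x_a(x_b-x_v)-x_bx_v\ge m^2-2mx_v$.
\end{itemize}
Hence $x_v\ge m/2$ and $x_u+x_v\ge 2x_v\ge m$, which is stronger than required.
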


\begin{proof}
If $|S'|\le1$, the claim is vacuous. Suppose that $d(G)\neq(\Delta^{n-s},s^2,(s-1)^{s-2})$, and choose distinct $u,v\in S'$ such that $d(v)\le d(u)\le d(w)$ for all $w\in S'\setminus\{u,v\}$.
By Lemma  \ref{lemma1}, it suffices to prove the inequality for this pair. Since $v\in S'$, we have $N_T(v)\neq\emptyset$, and Lemma  \ref{lemma1} gives $N_T(v)\subseteq N_T(u)$.
Choose $a\in N_T(v)$.

Suppose first that $G[  T\setminus N(a)]$ contains an edge, and choose adjacent vertices $b,c\in  T\setminus N(a)$.
We consider \textbf{Operation~4}: $\boldsymbol{G'=G-ua-va+ab+ac-bc}$; see Figure \ref{fig:4}.
In $G'$, the vertices $a,b,c$ retain degree $\Delta$, while each of $u$ and $v$ loses one edge; hence $\Delta(G')=\Delta$.
Its deficient vertices are precisely the vertices of $S$, and since no edge inside $S$ is changed, they induce a complete graph of order at most $k-1$.
If the operation removed every edge between $S$ and $T$, then $S'=\{u,v\}$ and $d(u)=d(v)=s$, which contradicts $d(G)\neq(\Delta^{n-s},s^2,(s-1)^{s-2})$.
Thus $G'$ contains an edge between $S$ and $T$. Applying Lemma \ref{connect}, $G'$ is connected. Hence $G'\in\mathcal{C}(n,\Delta)$, and the extremality of $G$ gives $\rho(G')-\rho(G)\le0$.
Since $a,b,c\in T$ and $u,v\in S$, by Lemma \ref{lemma3} we have $x_b>x_u$ and $x_c>x_v$. Recall that $m\le x_a,x_b,x_c\le M$.
By Lemma \ref{tv}, we obtain
\begin{equation}\label{eq16}
\begin{aligned}
\frac{\rho(G')-\rho(G)}{2}&\ge x_ax_b+x_ax_c-x_ax_u-x_ax_v-x_bx_c\\
&=x_a(x_b+x_c-x_u-x_v)-x_bx_c\\
&\ge m(x_b+x_c-x_u-x_v)-x_bx_c\\
&=m^2-m(x_u+x_v)-(x_b-m)(x_c-m)\\
&\ge m^2-m(x_u+x_v)-(M-m)^2,
\end{aligned}
\end{equation}
which leads to $x_u+x_v\ge m-(M-m)^2/m$.

\begin{figure}[H]
	\centering
    \begin{subfigure}{.4\textwidth}
	\centering
	\begin{tikzpicture}
	    \draw  (-1,0) ellipse [x radius=0.6cm, y radius=1cm];
		\draw  (1.5,0) ellipse [x radius=1.2cm, y radius=1.2cm];
        \draw(-1,0.5)[fill] circle[radius=0.5mm];
        \draw(-1,-0.5)[fill] circle[radius=0.5mm];
        \draw(2,0.5)[fill] circle[radius=0.5mm];
        \draw(2,-0.5)[fill] circle[radius=0.5mm];
        \draw(1,0)[fill] circle[radius=0.5mm];
        \draw (-1,0.5)--(1,0);\draw (-1,-0.5)--(1,0);\draw (2,0.5)--(2,-0.5);
        \draw [dashed](1,0)--(2,0.5);\draw[dashed](1,0)--(2,-0.5);
        \node at(-1,0.5)[left]{$u$};
        \node at(-1,-0.5)[left]{$v$};
        \node at(2,0.5)[right]{$b$};
        \node at(1,0)[above]{$a$};
        \node at(2,-0.5)[right]{$c$};
		\node at(0,-1.5)[above]{$G$};
	\end{tikzpicture}
    \end{subfigure}
    \begin{subfigure}{.4\textwidth}
	\centering
	\begin{tikzpicture}
	    \draw  (-1,0) ellipse [x radius=0.6cm, y radius=1cm];
		\draw  (1.5,0) ellipse [x radius=1.2cm, y radius=1.2cm];
        \draw(-1,0.5)[fill] circle[radius=0.5mm];
        \draw(-1,-0.5)[fill] circle[radius=0.5mm];
        \draw(2,0.5)[fill] circle[radius=0.5mm];
        \draw(2,-0.5)[fill] circle[radius=0.5mm];
        \draw(1,0)[fill] circle[radius=0.5mm];
        \draw [dashed](-1,0.5)--(1,0);\draw [dashed](-1,-0.5)--(1,0);\draw [dashed](2,0.5)--(2,-0.5);
        \draw (1,0)--(2,0.5);\draw(1,0)--(2,-0.5);
        \node at(-1,0.5)[left]{$u$};
        \node at(-1,-0.5)[left]{$v$};
        \node at(2,0.5)[right]{$b$};
        \node at(1,0)[above]{$a$};
        \node at(2,-0.5)[right]{$c$};
		\node at(0,-1.5)[above]{$G'$};
	\end{tikzpicture}
    \end{subfigure}
	\caption{Operation 4}
		\label{fig:4}
\end{figure}

It remains to suppose that  $G[  T\setminus N(a)] $ is edgeless. For each $z\in S'$, by the choice of $v$ we have $d(z)\ge d(v)$. Applying Lemma  \ref{lemma1} we get $a\in N_T(v)\subseteq N_T(z)$, which implies $z\in N(a)$ for all $z\in S'$.
Hence $ S\setminus N(a)\subseteq S\setminus S'$. Since vertices in $S\setminus S'$ have no neighbor in $T$, it follows that
\begin{equation}\label{eqh19}
 N_{V\setminus N(a)}(z)=\emptyset \quad\text{for all}\quad  z\in  T\setminus N(a).
\end{equation}
Recall that $u,v\in N(a)$. At most $s-2\le k-3$ of the $k-1$ nonneighbors of $a$ lie in $S$, so we can choose $b\in(T\setminus N(a))\setminus\{a\}$.
Since $d(b)=\Delta=|N(a)|$, \eqref{eqh19} implies   $N(b)=N(a)$.

Consider \textbf{Operation~5}: $\boldsymbol{G'=G-va-vb+ab}$; see Figure \ref{fig:5}. In $G'$, the vertices $a,b$ retain degree $\Delta$, while $v$ loses two edges; hence $\Delta(G')=\Delta$.
Its deficient vertices are precisely the vertices of $S$, and since no edge inside $S$ is changed, they induce a complete graph of order at most $k-1$.
The edges $ua$ and $ub$ remain in $G'$, so Lemma \ref{connect} shows that $G'$ is connected. Hence $G'\in\mathcal{C}(n,\Delta)$, and the extremality of $G$ gives $\rho(G')-\rho(G)\le0$.

Since $a,b\in T$ and $v\in S$, Lemma \ref{lemma3} gives $x_a,x_b\ge m>x_v$.
By Lemma \ref{tv}, we have
\begin{equation}\label{eq17}
\begin{aligned}
0\ge\frac{\rho(G')-\rho(G)}{2}&\ge x_ax_b-x_ax_v-x_bx_v\\
&=x_a(x_b-x_v)-x_bx_v\ge m(x_b-x_v)-x_bx_v\\
&=-mx_v+x_b(m-x_v)\ge m^2-2mx_v,
\end{aligned}
\end{equation}
which leads to $x_v\ge m/2$. Since $d(u)\ge d(v)$, applying Lemma  \ref{lemma1} we have $x_u\ge x_v$, and hence $x_u+x_v\ge m$.
\end{proof}
\begin{figure}[H]
	\centering
    \begin{subfigure}{.24\textwidth}
	\centering
	\begin{tikzpicture}
	    \draw  (-1,0) ellipse [x radius=0.6cm, y radius=1cm];
		\draw  (1,0) ellipse [x radius=0.6cm, y radius=1cm];
        \draw(-1,0)[fill] circle[radius=0.5mm];
        \draw(1,0.5)[fill] circle[radius=0.5mm];
        \draw(1,-0.5)[fill] circle[radius=0.5mm];
        \draw (-1,0)--(1,0.5);\draw (-1,-0)--(1,-0.5);\draw [dashed](1,-0.5)--(1,0.5);
        \node at(-1,0)[left]{$v$};
        \node at(1,0.5)[right]{$a$};
        \node at(1,-0.5)[right]{$b$};
		\node at(0,-1.5)[above]{$G$};
	\end{tikzpicture}
    \end{subfigure}
    \begin{subfigure}{.24\textwidth}
	\centering
	\begin{tikzpicture}
	    \draw  (-1,0) ellipse [x radius=0.6cm, y radius=1cm];
		\draw  (1,0) ellipse [x radius=0.6cm, y radius=1cm];
        \draw(-1,0)[fill] circle[radius=0.5mm];
        \draw(1,0.5)[fill] circle[radius=0.5mm];
        \draw(1,-0.5)[fill] circle[radius=0.5mm];
        \draw [dashed](-1,0)--(1,0.5);\draw [dashed](-1,0)--(1,-0.5);\draw (1,-0.5)--(1,0.5);
        \node at(-1,0)[left]{$v$};
        \node at(1,0.5)[right]{$a$};
        \node at(1,-0.5)[right]{$b$};
		\node at(0,-1.5)[above]{$G'$};
	\end{tikzpicture}
    \end{subfigure}
	\caption{Operation 5}
		\label{fig:5}
\end{figure}

\begin{claim}\label{claim6}
If $G$ has a vertex of degree $s$, then $d(G)=(\Delta^{n-s},s^2,(s-1)^{s-2})$ or $d(G)=(\Delta^{n-s},s,(s-1)^{s-1})$, where $2\le s\le k-1$ in the first sequence and $1\le s\le k-1$ in the second.
\end{claim}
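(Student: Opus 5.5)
Suppose $w\in S$ has degree $s$; such a vertex lies in $S'$ and, since $G[S]$ is complete, has exactly one neighbor $a\in T$. By Lemma \ref{lemma1}, $x_w$ is the smallest Perron component among the vertices of $S'$. The plan is to show that every other vertex of $S'$ has degree at most $s$. Then $S'$ consists of vertices of degree exactly $s$, and by Lemma \ref{lemma1} any two of them have the same closed neighborhood, so they all see the single vertex $a$ of $T$. If $|S'|=1$ we get $(\Delta^{n-s},s,(s-1)^{s-1})$, and if $|S'|=2$ we get $(\Delta^{n-s},s^2,(s-1)^{s-2})$.

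\textbf{Step 1: $x_w$ is small.} The Perron equation at $w$ gives $\rho x_w=x(S)-x_w+x_a$. Applying \eqref{eqh5} to every vertex of $S$ gives $x(S)\le (k-1)\frac{n-k-s}{\rho+1-s}M=O(M)$. Hence $x_w=O(M/\rho)=O(m/n)$ by \eqref{eq2}. The same bound holds for every vertex of $S$ of degree $O(1)$, in particular for all vertices of degree at most $s+k$.

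\textbf{Step 2: every other vertex of $S'$ has degree at most $s$.} Suppose some $u\in S'\setminus\{w\}$ exists. First assume $d(G)\notin\{(\Delta^{n-s},s^2,(s-1)^{s-2})\}$. Then Claim \ref{claim5} applied to the pair $u,w$ gives $x_u\ge m-(M-m)^2/m-x_w=m(1-O(n^{-1}))$, using \eqref{eq1}. On the other hand, Claim \ref{claim1} gives $d(u)\le d(w)+k=s+k$, so by Step 1 we have $x_u=O(m/n)$. For large $n$ these two bounds contradict each other. Therefore, outside the excluded sequence, $S'=\{w\}$, and every vertex of $S\setminus S'$ has degree $s-1$ by \eqref{eqh1}. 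This gives $d(G)=(\Delta^{n-s},s,(s-1)^{s-1})$. In the excluded case the conclusion already holds.

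Two points need checking. First, Claim \ref{claim5} is stated with hypothesis ``$d(G)\neq$'' the sequence, so the dichotomy above is exactly right; there is no need to show that $S'$ has no vertex of larger degree in the excluded case, because that case is itself one of the two allowed outcomes. Second, the exceptional sequences from Claims \ref{claim3} and \ref{claim4} need no separate treatment: a vertex of degree $\Delta-1$ or $\Delta-2$ cannot coexist with $w$, because Claim \ref{claim1} bounds degree differences in $S'$ by $k$, while $\Delta-2-s>k$ for large $n$. The only exception is when $w$ itself is the vertex of degree $\Delta-1$ or $\Delta-2$, which is impossible for large $n$ since $s\le k-1$.

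The main obstacle is making Step 1 quantitatively sharp enough against the lower bound $m-(M-m)^2/m$. This requires $(M-m)^2/m=O(m/\rho^2)$ from \eqref{eq1}, which is negligible. It also requires both $x_u$ and $x_w$ to be $O(m/n)$, which holds because degrees in $S'$ are bounded by $s+k$. Finally, I must confirm that $s\ge1$ and $s\le k-1$ (Lemma \ref{lemma4}), and that when $s=1$ only the second form is possible.
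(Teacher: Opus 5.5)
Your proof is correct and follows essentially the same route as the paper. Both arguments combine the bound $x_u=O(m/n)$ for vertices of $S$ of degree at most $s+k$ (reached via Claim~\ref{claim1}) with the lower bound $m-(M-m)^2/m=m(1-O(n^{-2}))$ from Claim~\ref{claim5}, which forces $S'=\{w\}$ unless $d(G)=(\Delta^{n-s},s^2,(s-1)^{s-2})$. The differences are cosmetic: you split on whether $d(G)$ is the excluded sequence rather than on the number of degree-$s$ vertices, and you bound $x(S)$ via \eqref{eqh5} rather than Lemma~\ref{lemma3}. Note that your opening ``plan'' (every vertex of $S'$ has degree $s$ and $|S'|\le 2$) is not what Step~2 actually proves, and it is not needed.
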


\begin{proof}
Suppose that $G$ has a vertex of degree $s$. For each $u\in S$ with $d(u)\le s+k$, when $n$ is sufficiently large, by Lemma \ref{lemma3} and (\ref{eq2}), we have
\begin{equation}\label{eq18}
x_u=\frac{1}{\rho}\sum_{w\sim u}x_w\le\frac{M}{\rho}d(u)=\frac{\alpha m}{\rho}d(u)<\left(1+\frac{k}{\rho}\right)\frac{m}{\rho}(2k-1)<\frac{m}{4}.
\end{equation}
On the other hand, (\ref{eq1}) gives
\begin{equation}\label{eqh22}
m-(M-m)^2/m>(1-k^2/\rho^2)m>m/2
\end{equation} for sufficiently large $n$.

If $G$ has at least two vertices of degree $s$, choose distinct such vertices $u,v$. Then (\ref{eq18}) gives $x_u+x_v<m/2$, so Claim \ref{claim5} forces $d(G)=(\Delta^{n-s},s^2,(s-1)^{s-2})$.

It remains to suppose that $G$ has a unique vertex $u$ of degree $s$. If $S'\setminus\{u\}\neq\emptyset$, choose $v\in S'\setminus\{u\}$.
Claim \ref{claim1} gives $d(v)\le d(u)+k=s+k$, so (\ref{eq18}) gives $x_u+x_v<m/2$.
By Claim \ref{claim5} and \eqref{eqh22}, we get $x_u+x_v>m/2$, a contradiction.
Consequently, $S'=\{u\}$. It follows that each vertex of $S\setminus\{u\}$   has degree $s-1$, and hence $d(G)=(\Delta^{n-s},s,(s-1)^{s-1})$.
\end{proof}

\begin{claim}\label{claim7}
If
\begin{eqnarray*}
d(G)&\not\in&\left\{(\Delta^{n-s},s^2,(s-1)^{s-2}): 2\le s\le k-1\right\}\\
&&\cup\left\{ (\Delta^{n-s},s,(s-1)^{s-1}),(\Delta^{n-s},s+1,(s-1)^{s-1}): 1\le s\le k-1\right\},
\end{eqnarray*}
  then
$x_v\ge m/2-(M-m)^2/(2m)$
for all $v\in S'$.
\end{claim}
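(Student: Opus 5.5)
The plan is to imitate the proof of Claim \ref{claim5}, but now with a single deficient vertex $v$ giving up two edges to $T$ instead of two vertices each giving up one. By Lemma \ref{lemma1}, $x_w\ge x_v$ whenever $d(w)\ge d(v)$. So it suffices to take $v\in S'$ of minimum degree in $S'$. If $d(v)=s$, Claim \ref{claim6} places $d(G)$ among the excluded sequences. Hence $d(v)\ge s+1$, so $|N_T(v)|=d(v)-s+1\ge 2$. The target inequality is equivalent to
\[
m^2-2mx_v-(M-m)^2\le 0,
\]
and each operation below is designed to have Rayleigh quotient gain at least the left-hand side.

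\textbf{Case 1: two neighbors of $v$ in $T$ are nonadjacent.} Choose nonadjacent $a,a'\in N_T(v)$ and apply $G'=G-va-va'+aa'$. The vertices $a,a'$ keep degree $\Delta$, and only $v$ loses two edges. The deficient set is still $S$, a clique of order at most $k-1$. To apply Lemma \ref{connect} we need $\partial(S,T)\ne\emptyset$ in $G'$; this is where the hypothesis enters.
\begin{itemize}
\item If $|N_T(v)|\ge3$, then $v$ keeps a $T$-neighbor.
\item If $|S'|\ge2$, any $w\in S'\setminus\{v\}$ keeps its $T$-edges.
\item The only remaining situation is $S'=\{v\}$ and $d(v)=s+1$. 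Then $d(G)=(\Delta^{n-s},s+1,(s-1)^{s-1})$, which is excluded.
\end{itemize}
So $G'\in\mathcal{C}(n,\Delta)$ and $\rho(G')\le\rho(G)$. Lemma \ref{tv} together with $x_a,x_{a'}\ge m>x_v$ (Lemma \ref{lemma3}) gives
\[
0\ge x_ax_{a'}-x_v(x_a+x_{a'})=(x_a-x_v)(x_{a'}-x_v)-x_v^2\ge m^2-2mx_v .
\]
Hence $x_v\ge m/2$, which is stronger than needed.

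\textbf{Case 2: $N_T(v)$ is a clique.} Fix adjacent $a,a'\in N_T(v)$. I would look for an edge $bc$ with $b,c\in T$, $b\notin N[a]$ and $c\notin N[a']$, and apply $G'=G-va-va'+ab+a'c-bc$. Degrees in $T$ are preserved, and connectivity and nonregularity follow exactly as in Case 1. The gain is bounded below by
\[
x_a(x_b-x_v)+x_{a'}(x_c-x_v)-x_bx_c\ge m(x_b+x_c-2x_v)-x_bx_c=m^2-2mx_v-(x_b-m)(x_c-m),
\]
using $x_b,x_c>x_v$ from Lemma \ref{lemma3}. This is at least $m^2-2mx_v-(M-m)^2$, which gives precisely the claimed bound.

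\textbf{Main obstacle: finding the edge $bc$ in Case 2.} The first attempt is to take $b,c$ adjacent in $T\setminus N(a)$ or in $T\setminus N(a')$. If either of these sets contains an edge not involving $a,a'$, we are done. Note $a'\sim a$, so $a'\notin T\setminus N(a)$, and similarly $a\notin T\setminus N(a')$.

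Otherwise, as in the second half of Claim \ref{claim5}, $G[T\setminus N(a)]$ is edgeless. Every $z\in S'$ is adjacent to $a$, since $N_T(v)\subseteq N_T(z)$. Therefore each $b\in T\setminus N[a]$ has no neighbors outside $N(a)$, which forces $N(b)=N(a)$. Such a $b$ exists because at most $k-3$ non-neighbors of $a$ lie in $S$.

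Now apply Operation 5 in the form $G-va-vb+ab$; the edges from $v$ to $b$ exist since $N(b)=N(a)\ni v$. The vertex $a'$ stays adjacent to $v$, so $\partial(S,T)\ne\emptyset$ and $G'$ is connected by Lemma \ref{connect}. The computation in \eqref{eq17} then gives $x_v\ge m/2$.

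Checking that these subcases are exhaustive, and that the chosen $b,c$ are distinct from $a,a'$ so that the degrees really are preserved, is the delicate bookkeeping step.
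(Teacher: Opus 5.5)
\textbf{Where the proof breaks.} Your reduction to a minimum-degree $v\in S'$, your Case~1 (the paper's Operation~5, with the connectivity check via the excluded sequence $(\Delta^{n-s},s+1,(s-1)^{s-1})$), and the operation and estimate in Case~2 are all correct. Case~2 is exactly the paper's Operation~6, with your $a,a',b,c$ playing the roles of its $a,b,c,d$. The gap is in producing the edge $bc$ in Case~2.

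An edge $bc$ inside $T\setminus N(a)$ only guarantees that $b$ and $c$ are nonadjacent to $a$. Nothing prevents both from being adjacent to $a'$. Then $a'c$ is already an edge, so $G-va-va'+ab+a'c-bc$ is undefined, and using $ab+ac$ instead would push $d(a)$ to $\Delta+1$. So ``if either of these sets contains an edge not involving $a,a'$, we are done'' is false as stated.

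Your fallback branch is also vacuous in Case~2. There you obtain $b\in T\setminus N[a]$ with $N(b)=N(a)\ni v$, so $b\in N_T(v)$, and the clique assumption on $N_T(v)$ forces $b\sim a$, a contradiction. Hence all of Case~2 rests on the unjustified branch. (Also, in that branch the count of nonneighbors of $a$ in $S$ is at most $s-1\le k-2$, not $k-3$, since only $v$ is guaranteed to lie in $S'$.)

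\textbf{The missing idea.} You need a counting argument that builds the cross edge directly, using that every vertex of $T$ has exactly $k-1$ nonneighbors.
\begin{enumerate}
\item Since $N_T(v)\subseteq N_T(z)$ for all $z\in S'$, you have $S'\subseteq N(a)$. So at most $s-1\le k-2$ nonneighbors of $a$ lie in $S$, and you can pick $b\in T\setminus N[a]$.
\item Let $Y=V\setminus N[a']$, so $|Y|=k-1$ and $a\notin Y$.
\item Suppose $b$ had no neighbor in $Y$. If $b\notin Y$, then $Y\cup\{a\}$ is a set of $k$ nonneighbors of $b$. If $b\in Y$, then $(Y\setminus\{b\})\cup\{a,a'\}$ is. Either way this contradicts $d(b)=\Delta$. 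Hence $b\sim c$ for some $c\in Y$, i.e., $c\notin N[a']$.
\item Finally $c\in T$. If $c\in S'$, then $a'\in N_T(v)\subseteq N_T(c)$, contradicting $c\in Y$. If $c\in S\setminus S'$, then $c$ has no neighbor in $T$, contradicting $b\sim c$.
\end{enumerate}
The vertices $a,a',b,c$ are then automatically distinct, with $ab,a'c\notin E(G)$ and $bc\in E(G)$. Your Case~2 computation then finishes the proof, exactly as in the paper.
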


\begin{proof}
Suppose that $d(G)$ does not belong to the exceptional sequences in the claim. If $S'=\emptyset$, the claim is vacuous. So we assume that $S'\neq\emptyset$.
By Claim \ref{claim6}, each vertex in $S'$ has degree at least $s+1$.
Choose $v\in S'$ with $d(v)=\min\{d(w):w\in S'\}$. By Lemma  \ref{lemma1}, it suffices to prove the assertion for this vertex.
Put $W=N_T(v)$. Since $d(v)\ge s+1$, we have $|W|=d(v)-s+1\ge2$.

Suppose first that $G[W]$ is not complete, and choose distinct nonadjacent vertices $a,b\in W$.
Consider  Operation~5  again. We have $\Delta(G')=\Delta$ and its
  deficient vertices are precisely the vertices of $S$, which induce a complete graph of order at most $k-1$.
An edge between $S$ and $T$ remains unless $S'=\{v\}$ and $d(v)=s+1$, which is the excluded degree sequence.
Hence Lemma \ref{connect} shows that $G'$ is connected, and therefore $G'\in\mathcal{C}(n,\Delta)$.
Using the same argument as in the proof of Claim 5, we have \eqref{eq17}
and hence $x_v\ge m/2$.

Suppose now that $G[W]$ is complete, and choose distinct $a,b\in W$. Since $v$ has minimum degree in $S'$, by Lemma  \ref{lemma1} we have
\begin{equation}\label{eqh23}
N_T(v)\subseteq N_T(u) \quad\text{for all}\quad  u\in S'.
\end{equation}
Thus $a$ is adjacent to all vertices of $S'$ and to no vertex of $S\setminus S'$. Since $S'\neq\emptyset$ and $s\le k-1$, at most $s-1\le k-2$ of the $k-1$ nonneighbors of $a$ lie in $S$.
Then $T$ contains at least one nonneighbor of $a$, say $c$. Denote $Y=V\setminus N[b]$. Then $|Y|=k-1$. We assert that
there exists $d\in Y$ such that $c\sim d$. Otherwise, if $c\notin Y$, then the $k$ vertices in $Y\cup\{a\}$ are nonneighbors of $c$; if $c\in Y$, then the $k$ vertices in $(Y\setminus\{c\})\cup\{a,b\}$ are nonneighbors of $c$. Both alternatives contradict $d(c)=\Delta$.

\begin{samepage}
  If $d\in S'$, then by \eqref{eqh23} we have $b\in N_T(v)\subseteq N_T(d)$, which contradicts $d\in Y$.
If $d\in S\setminus S'$, then $d$ has no neighbor in $T$, which contradicts $c\sim d$. Hence $d\in T$.
\end{samepage}
The vertices $a,b,c,d$ are distinct, with $ac,bd\notin E(G)$ and $va,vb,cd\in E(G)$. Now we consider \textbf{Operation~6}: $\boldsymbol{G'=G-va-vb+ac+bd-cd}$; see Figure \ref{fig:6}.

\begin{figure}[H]
	\centering
    \begin{subfigure}{.4\textwidth}
	\centering
	\begin{tikzpicture}
	    \draw  (-1,0) ellipse [x radius=0.6cm, y radius=1cm];
		\draw  (1.5,0) ellipse [x radius=1.2cm, y radius=1.2cm];
        \draw(-1,0)[fill] circle[radius=0.5mm];
        \draw(1,0.5)[fill] circle[radius=0.5mm];
        \draw(1,-0.5)[fill] circle[radius=0.5mm];
        \draw(2,0.5)[fill] circle[radius=0.5mm];
        \draw(2,-0.5)[fill] circle[radius=0.5mm];
        \draw (-1,0)--(1,0.5);\draw(-1,0)--(1,-0.5);
        \draw [dashed](1,0.5)--(2,0.5);\draw[dashed](1,-0.5)--(2,-0.5);\draw(2,-0.5)--(2,0.5);
        \node at(-1,0)[left]{$v$};
        \node at(1,0.5)[above]{$a$};
        \node at(1,-0.5)[below]{$b$};
        \node at(2,0.5)[above]{$c$};
        \node at(2,-0.5)[below]{$d$};
		\node at(0,-1.5)[above]{$G$};
	\end{tikzpicture}
    \end{subfigure}
    \begin{subfigure}{.4\textwidth}
	\centering
	\begin{tikzpicture}
	     \draw  (-1,0) ellipse [x radius=0.6cm, y radius=1cm];
		\draw  (1.5,0) ellipse [x radius=1.2cm, y radius=1.2cm];
        \draw(-1,0)[fill] circle[radius=0.5mm];
        \draw(1,0.5)[fill] circle[radius=0.5mm];
        \draw(1,-0.5)[fill] circle[radius=0.5mm];
        \draw(2,0.5)[fill] circle[radius=0.5mm];
        \draw(2,-0.5)[fill] circle[radius=0.5mm];
        \draw [dashed](-1,0)--(1,0.5);\draw[dashed](-1,0)--(1,-0.5);
        \draw (1,0.5)--(2,0.5);\draw(1,-0.5)--(2,-0.5);\draw[dashed](2,-0.5)--(2,0.5);
        \node at(-1,0)[left]{$v$};
        \node at(1,0.5)[above]{$a$};
        \node at(1,-0.5)[below]{$b$};
        \node at(2,0.5)[above]{$c$};
        \node at(2,-0.5)[below]{$d$};
		\node at(0,-1.5)[above]{$G'$};
	\end{tikzpicture}
    \end{subfigure}
	\caption{Operation 6}
		\label{fig:6}
\end{figure}

In $G'$, the vertices $a,b,c,d$ retain degree $\Delta$, while $v$ loses two edges; hence $\Delta(G')=\Delta$.
Its deficient vertices are precisely the vertices of $S$, and since no edge inside $S$ is changed, they induce a complete graph of order at most $k-1$.
As in the first case,  $G'$ is connected, and therefore $G'\in\mathcal{C}(n,\Delta)$.
Since $a,b,c,d\in T$ and $v\in S$, by Lemma \ref{lemma3} we have $x_c,x_d>x_v$ and $m\le x_a,x_b,x_c,x_d\le M$.
By he extremality of $G$ and Lemma \ref{tv}, we obtain
$$
\begin{aligned}
0&\ge x_a(x_c-x_v)+x_b(x_d-x_v)-x_cx_d\\
&\ge m(x_c+x_d-2x_v)-x_cx_d\\
&=m^2-2mx_v-(x_c-m)(x_d-m)\\
&\ge m^2-2mx_v-(M-m)^2,
\end{aligned}
$$
which leads to $x_v\ge m/2-(M-m)^2/(2m)$.
\end{proof}
\begin{claim}\label{claim8}
If $d(G)\notin\mathcal{D}$, then
$$
\frac{m}{2}-\frac{(M-m)^2}{2m}\le x_u\le\frac{M}{2}\qquad\text{and}\qquad\frac{n}{2}-\frac{3k-1}{2}\le d(u)\le\frac{n}{2}+\frac{3k-1}{2}
$$ for all $u\in S'$.
\end{claim}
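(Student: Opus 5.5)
If $d(G)\notin\mathcal{D}$, the exceptional sequences of Claims~\ref{claim4} and~\ref{claim7} are all excluded. The bounds on $x_u$ then come for free. Claim~\ref{claim4} gives $x_u\le M/2$ for every $u\in S$, hence for every $u\in S'$. Claim~\ref{claim7} gives $x_u\ge m/2-(M-m)^2/(2m)$ for every $u\in S'$. So the whole task is to turn these two bounds into the degree bounds, using the Perron equation at $u$ together with \eqref{eq1}, \eqref{eq2} and Lemma~\ref{ad}.

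\textbf{Upper bound on $d(u)$.} Let $u\in S'$. Since $G[S]$ is complete, $N(u)=(S\setminus\{u\})\cup N_T(u)$ and $|N_T(u)|=d(u)-s+1$. The Perron equation gives
$$
\rho x_u\ \ge\ \sum_{w\in N_T(u)}x_w\ \ge\ (d(u)-s+1)m .
$$
Combining with $x_u\le M/2$ yields $d(u)-s+1\le \rho\alpha/2$. By \eqref{eq2}, $\rho\alpha<\rho+k$, and $\rho\le n-k$. Hence
$$
d(u)\le \frac{n}{2}+s-1<\frac{n}{2}+k-1\le \frac{n}{2}+\frac{3k-1}{2}.
$$

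\textbf{Lower bound on $d(u)$.} The Perron equation also gives $\rho x_u\le d(u)M$. With the lower bound on $x_u$ this becomes
$$
d(u)\ \ge\ \frac{\rho}{M}\left(\frac{m}{2}-\frac{(M-m)^2}{2m}\right)=\frac{\rho}{2\alpha}-\frac{\rho(\alpha-1)^2}{2\alpha}.
$$
By \eqref{eq2}, $1/\alpha>1-k/\rho$ and $(\alpha-1)^2<k^2/\rho^2$. The first term is therefore greater than $(\rho-k)/2$, and the second is $O(1/\rho)$. Since $\rho>n-2k+1$ (proof of Lemma~\ref{ad}), we get
$$
d(u)>\frac{n-3k+1}{2}-O(n^{-1}).
$$
Because $d(u)$ is an integer and $n$ is large, this should give $d(u)\ge n/2-(3k-1)/2$.

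\textbf{Main obstacle.} The only delicate point is this last constant. The $O(1/n)$ term must be absorbed by integrality, or by a slightly sharper estimate, so that the bound lands exactly on $n/2-(3k-1)/2$. I expect the slack is enough, but I would redo the error terms carefully to confirm it.

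The sharper estimate available is to use that the $s-1$ neighbours of $u$ in $S$ have $x_w\le M/2$ instead of $M$. This gives
$$
\rho x_u\le (d(u)-s+1)M+\frac{(s-1)M}{2},
$$
which saves about $(s-1)/2$ in the lower bound. If integrality alone still does not close the gap, I would rerun the estimate with this refinement.
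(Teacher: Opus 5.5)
Your proof is correct and follows the paper's route: Claims~\ref{claim4} and~\ref{claim7} bound $x_u$, and the Perron-equation sandwich $(d(u)-s+1)m\le\rho x_u\le d(u)M$ together with \eqref{eq2} converts those bounds into degree bounds. The integrality step you were unsure about does close, because the discarded term is less than $k^2/(2\rho)<\tfrac12$ and $\tfrac{n-3k+1}{2}\in\tfrac12\mathbb{Z}$. The paper avoids that step with the exact identity $\frac{1}{\alpha}\bigl(1-(\alpha-1)^2\bigr)=2-\alpha$, which gives $d(u)\ge\frac{\rho}{2}(2-\alpha)>\frac{\rho-k}{2}>\frac{n-3k+1}{2}$ directly, with no error term.
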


\begin{proof}
Assume that $d(G)\notin\mathcal{D}$, and let $u\in S'$. By Claim  \ref{claim4} and Claim \ref{claim7} we have
\begin{equation}\label{eqh24}
\frac{m}{2}-\frac{(M-m)^2}{2m}\le x_u\le\frac{M}{2}.
\end{equation}
Consider the  Perron equation at $u$
$$\rho x_u=\sum_{w\in N_T(u)}x_w+\sum_{v\in S\setminus \{u\}}x_v.$$
By Lemma \ref{lemma3}, we get
\begin{equation}\label{eq19}
 (d(u)-s+1)m\le\rho x_u\le(d(u)-s+1)M+(s-1)m\le d(u)M.
\end{equation}
Together with \eqref{eqh24}, (\ref{eq19}) yields
$$
\frac{\rho}{2}(2-\alpha)=\frac{\rho}{\alpha}\left(\frac12-\frac{(\alpha-1)^2}{2}\right)\le \frac{\rho x_u}{M}\le d(u)\le\frac{\rho x_u}{m}+s-1\le\frac{\rho\alpha}{2}+s-1.
$$
By (\ref{eq2}), we have $2-\alpha>1-k/\rho$ and $\rho\alpha<\rho+k$. Since $n-2k+1<\rho\le n-k$ and $s\le k-1$, we obtain
$$
\begin{aligned}
d(u)&\ge\frac{\rho}{2}(2-\alpha)>\frac{\rho-k}{2}>\frac{n}{2}-\frac{3k-1}{2},\\
d(u)&\le\frac{\rho\alpha}{2}+s-1<\frac{\rho+k}{2}+s-1\le\frac{n}{2}+s-1\le\frac{n}{2}+k-2<\frac{n}{2}+\frac{3k-1}{2}.
\end{aligned}
$$
\end{proof}

Since $G$ is connected and $G[S]$ is complete, we have $S'\ne\emptyset$.
Suppose that $d(G)\notin\mathcal{D}$, and choose $u\in S'$ such that $x_u=\max_{v\in S'p}x_v$. By Claim \ref{claim8}, we have
\begin{equation}\label{eqh26}
\frac{m}{2}-\frac{(M-m)^2}{2m}\le x_u\le\frac{M}{2}.
\end{equation}
By the choice of $u$, Lemma  \ref{lemma1} imply that each vertex of $T\setminus N_T(u)$ has no neighbor in $S$. Starting from $G$, we repeatedly apply  {Operation~3} at $u$ until the degree of $u$ reaches the member of $\{\Delta-2,\Delta-1\}$ having the same parity as $d(u)$. This is possible at every step. In fact, the operations only add edges from $u$ to $T$ and delete edges inside $T$, so each vertex remaining in $W:=T\setminus N_T(u)$ has no neighbor in $S$. Since $u$ remains deficient, we have $|N_T(u)|<\Delta<|T|$, and hence $W\ne\emptyset$. If $W$ were independent, each of its vertices would have all its $\Delta$ neighbors in $N_T(u)$, contradicting $|N_T(u)|<\Delta$. Therefore, we can always find two adjacent vertices in $W$ to repeat Operation 3.

Let $r$ be the number of operations performed and $G'$ the resulting graph. Notice that each operation preserves the degree $\Delta$ of all vertices in $T$, increases the degree of $u$ by two while keeping it below $\Delta$, changes no other vertex degree, and changes no edge of $G[S]$. Moreover, $\partial(S,T)$ remains nonempty. Hence Lemma \ref{connect} shows that every intermediate graph, and in particular $G'$, belongs to $\mathcal{C}(n,\Delta)$. Since the final degree of $u$ belongs to $\{\Delta-2,\Delta-1\}$, by Claim \ref{claim8} we have
$$
\frac{n}{4}-\frac{5k+3}{4}\le r\le\frac{n}{4}+\frac{k-3}{4}.
$$

Let $Z$ be the vector obtained from $X$ by replacing $x_u$ with $m$, and let $a_ib_i$ be the edge removed in the $i$-th operation. Denote the adjacency matrices of $G$ and $G'$ by $A$ and $A'$, respectively. Set
$$
 \qquad \varepsilon=\frac{M-m}{m}\qquad\text{and}\qquad\beta=\frac{k}{n-2k+1}.
$$
Recall that $\rho> n-2k+1$. By (\ref{eq1}), we have $0\le\varepsilon<k/\rho<\beta$. If $n>5k+3$, then $\beta<1$ and $n/2-(5k+3)/2>0$; hence by \eqref{eqh26} we have $x_u\le M/2=(1+\varepsilon)m/2<m$. Claim \ref{claim8} also gives
$$
0<m-x_u\le\frac{m}{2}(1+\varepsilon^2).
$$
Since $AX=\rho X$ and
$$
m(x_{a_i}+x_{b_i})-x_{a_i}x_{b_i}=m^2-(x_{a_i}-m)(x_{b_i}-m)\ge m^2-(M-m)^2,
$$
we have
\begin{equation}\label{eq20}
\begin{aligned}
Z^TA'Z-\rho Z^TZ&=(Z^TAZ-\rho Z^TZ)+Z^T(A'-A)Z\\
&=-\rho(m-x_u)^2+2\sum_{i=1}^r\big(m(x_{a_i}+x_{b_i})-x_{a_i}x_{b_i}\big)\\
&\ge-\rho(m-x_u)^2+2r\big(m^2-(M-m)^2\big)\\
&\ge\left(2r(1-\varepsilon^2)-\frac{\rho}{4}(1+\varepsilon^2)^2\right)m^2\\
&>\left(\left(\frac{n}{2}-\frac{5k+3}{2}\right)(1-\beta^2)-\frac{n-k}{4}(1+\beta^2)^2\right)m^2.
\end{aligned}
\end{equation}
The final coefficient in (\ref{eq20}) is positive whenever
$$
2(n-5k-3)(1-\beta^2)>(n-k)(1+\beta^2)^2.
$$
Since this explicit inequality holds for all sufficiently large $n$, it follows from (\ref{eq20}) and the Rayleigh quotientp that $\rho(G')>\rho(G)$, contradicting $G\in\mathcal{G}(n,\Delta)$. Therefore,  we obtain  $d(G)\in \mathcal{D}$.
\end{proof}

\section{Proof of Theorem 1}

Define $M_1(a,b)$ and $M_2(a,b)$  to be  nonnegative matrices  as
$$M_1(a,b)=
\begin{bmatrix}
k & a & b & 0 \\
1 & k+a-1 & 0 & 0 \\
1 & 0 & a+b & n-a-b-1 \\
0 & 0 & b & n-b
\end{bmatrix}$$
and
$$M_2(a,b)=
\begin{bmatrix}
k+1 & a & b & 0 \\
2 & k+a-1 & 0 & 0 \\
2 & 0 & a+b & n-a-b-2 \\
0 & 0 & b & n-b
\end{bmatrix},$$
where the parameters $a\ge 0, b\ge 1$ satisfies $a+b\le n-1$ in $M_1(a,b)$ and $a+b\le n-2$ in $M_2(a,b)$. Let $\lambda_{a,b}=\rho(M_1(a,b))$ and $\mu_{a,b}=\rho(M_2(a,b))$ be their largest real eigenvalues.

\begin{defi}
Let $G\in\mathcal{C}(n,n-k)$.
Let $A_G$ be the adjacency matrix of $G$, and define $B_G=A_G+kI_n$.
Then $B_G$ is a nonnegative irreducible matrix. Moreover, $B_G$ and $A_G$ have the same   Perron vector.
For a vertex $p\in V(G)$, define
$$\mathcal{L}(G,p)=\{(u,v)\in N_T(p)\times(T\setminus N_T(p)):uv\notin E(G)\}.$$
We define the \textbf{matrix operation} with respect to $G$ and $p$ by replacing $B_G$ with
$$B_{G,p}:=B_G-\sum_{(u,v)\in\mathcal{L}(G,p)}(e_u-e_v)(e_u-e_v)^T.$$

Notice that all entries of  $(e_u-e_v)(e_u-e_v)^T$ are zero except the  $2\times 2$ principal submatrix
$$\begin{bmatrix}
1&-1\\
-1&1
\end{bmatrix},$$
which lies in rows and columns corresponding to the vertices $u,v$.
Since each vertex in $T$ is incident to at most $k-1$ pairs in $\mathcal{L}(G,p)$,   $B_{G,p}$ is nonnegative and irreducible. Moreover, the matrix
  $B_G-B_{G,p}$ is positive semidefinite.
Let $X^*=(x_u^*)_{u\in V(G)}$ be the  Perron vector of $B_{G,p}$, where $x_u^*$ denotes the component corresponding to the vertex $u$.
\end{defi}

Assume that $G\in \mathcal{G}(n,\Delta)$. Define
$$
\begin{aligned}
\mathcal{D}_1:=&\left\{(\Delta^{n-1},\Delta-2)\right\}\cup\\
&\{(\Delta^{n-s},\Delta-1,(s-1)^{s-1}),(\Delta^{n-s},s+1,(s-1)^{s-1}),(\Delta^{n-s},s,(s-1)^{s-1}):1\le s<k\}
\end{aligned}
$$
  and
$$\mathcal{D}_2=\{(\Delta^{n-2},(\Delta-1)^2)\}\cup\{(\Delta^{n-s},s^2,(s-1)^{s-2}):2\le s\le k-1\}.$$
Then $\mathcal{D}=\mathcal{D}_1\cup\mathcal{D}_2$, and Lemma \ref{lemma5} gives $d(G)\in\mathcal{D}_1\cup\mathcal{D}_2$. If $d(G)\in\mathcal{D}_i$, then $s_1=i$ for $i=1,2$. If $s_1=2$, write $S'=\{p_1,p_2\}$. Then applying Lemma \ref{lemma1} we have $N_T(p_1)=N_T(p_2)$.

\begin{claim}\label{claim9}
For each $p\in S'$, we have
$$
0\le \rho(G)+k-\rho(B_{G,p})<\frac{k^2(k^2-1)}{(n-s)\rho(G)^2}.
$$
\end{claim}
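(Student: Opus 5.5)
The plan is to compare $B_G$ and $B_{G,p}$ directly through Rayleigh quotients. The Perron vector $X$ of $G$ serves as a test vector for the lower side, and the Perron vector $X^*$ of $B_{G,p}$ serves as a test vector for the upper side. The difference $B_G-B_{G,p}=\sum_{(u,v)\in\mathcal{L}(G,p)}(e_u-e_v)(e_u-e_v)^T$ is positive semidefinite, so its quadratic form on any vector $Y$ is $\sum_{(u,v)\in\mathcal{L}}(y_u-y_v)^2\ge 0$. The key quantitative input is that all pairs in $\mathcal{L}(G,p)$ lie in $T\times T$, so the entries involved are all between $m$ and $M$. Lemma \ref{lemma6} applies to $G$ by Lemma \ref{lemma1}, and \eqref{eq1} gives $(x_u-x_v)^2\le (M-m)^2<k^2/(\rho^2(n-s))$.

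\textbf{Left inequality.} Since $B_{G,p}\le B_G$ in the Loewner order and both are symmetric, Lemma \ref{tv} gives
\[
\rho(B_{G,p})=\max_{\|Y\|=1} Y^TB_{G,p}Y\le \max_{\|Y\|=1}Y^TB_GY=\rho(B_G)=\rho(G)+k.
\]
Here $\rho(B_G)=\rho+k$ because $B_G=A_G+kI$ and $A_G$ has largest eigenvalue $\rho$, with the shift by $k$ keeping it the Perron root.

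\textbf{Right inequality.} Take $Y=X$, the Perron vector of $G$. Then
\[
\rho(B_{G,p})\ge X^TB_{G,p}X=\rho+k-\sum_{(u,v)\in\mathcal{L}(G,p)}(x_u-x_v)^2.
\]
It remains to bound $|\mathcal{L}(G,p)|$. Each $u\in N_T(p)$ has exactly $k-1$ nonneighbours besides itself. I expect the sharp count to be $|\mathcal{L}|\le k(k-1)$, and I would derive it from the structure fixed by Lemma \ref{lemma5}.

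For the count, I would use that $T\setminus N_T(p)$ is small. The vertex $p\in S'$ has the largest $N_T$ among vertices of $S$, and $S\setminus S'$ has no $T$-neighbours. Hence every vertex of $T\setminus N_T(p)$ has no neighbour in $S$.
\begin{itemize}
\item In the cases where $d(p)\in\{\Delta-1,\Delta-2\}$, we have $|T\setminus N_T(p)|=|T|-(d(p)-s+1)\le k$.
\item Each $v\in T\setminus N_T(p)$ has at most $k-1$ nonneighbours in total, hence at most $k-1$ nonneighbours in $N_T(p)$. This gives $|\mathcal{L}|\le k(k-1)$.
\end{itemize}
In the cases where $d(p)$ is small (about $s$, $s+1$, or $s$ with multiplicity two), the roles are swapped: $N_T(p)$ has at most $2$ or so elements, each with at most $k-1$ nonneighbours. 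This again gives $|\mathcal{L}|\le 2(k-1)\le k(k-1)$.

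Combining, the deficit is at most $k(k-1)(M-m)^2<k(k-1)\cdot k^2/(\rho^2(n-s))$, which is at most the stated bound $k^2(k^2-1)/((n-s)\rho^2)$. Indeed $k(k-1)k^2\le k^2(k^2-1)$, so the looser bound even allows $|\mathcal{L}|\le k^2-1$.

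\textbf{Main obstacle.} The delicate step is the uniform bound on $|\mathcal{L}(G,p)|$ across all degree sequences in $\mathcal{D}$. The general fallback is $|\mathcal{L}|\le (k-1)\min\{|N_T(p)|,|T\setminus N_T(p)|\}$. For intermediate $d(p)$ this would not be $O(k^2)$, so the argument genuinely needs Lemma \ref{lemma5}. The largest case appears to be $d(p)=\Delta-2$ with $s=1$, where $|T\setminus N_T(p)|=k+1$. There the bound $(k+1)(k-1)=k^2-1$ exactly matches the stated constant, which suggests the intended count is $|\mathcal{L}|\le k^2-1$. I would verify each member of $\mathcal{D}_1\cup\mathcal{D}_2$ against this.
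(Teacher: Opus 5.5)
Your proposal is correct and is essentially the paper's proof. The left inequality comes from $B_G-B_{G,p}$ being positive semidefinite, and the right one from testing $B_{G,p}$ against the Perron vector $X$. Each $(x_u-x_v)^2$ is bounded by $(M-m)^2$ via \eqref{eq1}, and Lemma~\ref{lemma5} gives $|\mathcal{L}(G,p)|\le (k-1)\min\{|N_T(p)|,|T\setminus N_T(p)|\}\le k^2-1$. One slip to fix: your bullet claiming $|T\setminus N_T(p)|\le k$ whenever $d(p)\in\{\Delta-1,\Delta-2\}$ fails for $d(p)=\Delta-2$, where $|T\setminus N_T(p)|=n-1-d(p)=k+1$. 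That bullet therefore only yields $(k+1)(k-1)=k^2-1$, not $k(k-1)$, which is the constant the statement (and the paper) uses, as you observe in your final paragraph.
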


\begin{proof}
Fix $p\in S'$. Let $X=(x_v)_{v\in V(G)}$ be the   Perron vector of $G$, and set
$$
\rho=\rho(G),\qquad M=\max_{u\in T}x_u,\qquad m=\min_{u\in T}x_u.
$$
It follows from the definitions of $\mathcal{D}_1$ and $\mathcal{D}_2$ that either $|N_T(p)|\in\{1,2\}$ or $|T\setminus N_T(p)|\in\{k,k+1\}$.
Since each $u\in T$ has exactly $k-1$ nonneighbors in $V(G)\setminus\{u\}$, counting the pairs in $\mathcal{L}(G,p)$ from the smaller of the two vertex classes gives
\begin{equation}\label{eqhh28}p
|\mathcal{L}(G,p)|\le(k-1)\min\{|N_T(p)|,|T\setminus N_T(p)|\}\le(k-1)(k+1)=k^2-1.
\end{equation}

By Lemma  \ref{lemma1} and Lemma \ref{lemma6} we have \eqref{eq1}.
Recall that $B_G-B_{G,p}$ is positive semidefinite and $\rho(B_G)=\rho+k$ with
  $X$  aslo the  Perron vector of $B_G$.  By  Lemma \ref{tv} and  (\ref{eq1}), we obtain
$$
\begin{aligned}
0\le \rho+k-\rho(B_{G,p})
&\le\sum_{(u,v)\in\mathcal{L}(G,p)}(x_u-x_v)^2 \le|\mathcal{L}(G,p)|(M-m)^2
<\frac{k^2(k^2-1)}{(n-s)\rho^2}.
\end{aligned}
$$
\end{proof}

\begin{claim}\label{claim10}
Let $p\in S'$, and set $a=s_2$ and $b=|N_T(p)|$. If $d(G)\in\mathcal{D}_1$, then $\rho(B_{G,p})=\lambda_{a,b}$; if $d(G)\in\mathcal{D}_2$, then $\rho(B_{G,p})=\mu_{a,b}$.
\end{claim}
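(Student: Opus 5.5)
The plan is to exhibit an equitable partition of the index set of $B_{G,p}$ whose quotient matrix is exactly $M_1(a,b)$ (respectively $M_2(a,b)$), and then invoke Lemma \ref{QM}. By the definition, $B_{G,p}$ is a nonnegative irreducible symmetric matrix, so Lemma \ref{QM} gives $\rho(B_{G,p})=\rho(B)$ for the quotient $B$ of any equitable partition.

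\textbf{Effect of the matrix operation.} For each pair $(u,v)\in\mathcal{L}(G,p)$, subtracting $(e_u-e_v)(e_u-e_v)^T$ lowers the diagonal entries at $u$ and $v$ by $1$ and turns the $(u,v)$ and $(v,u)$ entries from $0$ into $1$. Thus $B_{G,p}$ is obtained from $A_G+kI$ by making every vertex of $N_T(p)$ "adjacent" to every vertex of $T\setminus N_T(p)$. For a vertex $w\in T$, let $t_w$ be the number of pairs of $\mathcal{L}(G,p)$ containing $w$. Then the diagonal entry at $w$ becomes $k-t_w$. No entry outside the two blocks $N_T(p)$ and $T\setminus N_T(p)$ is touched.

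\textbf{Case $d(G)\in\mathcal{D}_1$.} Here $S'=\{p\}$ and $s=a+1$. I take the partition
\[
V_1=\{p\},\quad V_2=S\setminus S',\quad V_3=N_T(p),\quad V_4=T\setminus N_T(p),
\]
so $|V_4|=n-a-b-1$. I use three facts:
\begin{itemize}
\item $G[S]$ is complete (Lemma \ref{lemma1});
\item vertices of $S\setminus S'$ have no neighbours in $T$, by \eqref{eqh1};
\item a vertex of $V_4$ has no neighbour in $S$.
\end{itemize}
With these I check the row sums.
\begin{itemize}
\item Row of $p$: $(k,a,b,0)$.
\item Row of $w\in V_2$: $(1,\,k+a-1,\,0,\,0)$.
\item Row of $u\in V_3$: let $e_u$ be the number of neighbours of $u$ in $V_3$. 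Then $d(u)=n-k$ reads $1+e_u+(|V_4|-t_u)=n-k$, which gives $e_u=a+b-k+t_u$. Hence the $V_3$-row sum is $e_u+k-t_u=a+b$. The $V_4$-row sum becomes $|V_4|=n-a-b-1$.
\item Row of $v\in V_4$: the $V_3$-part becomes $b$, and the same degree count gives a $V_4$-part equal to $n-b$.
\end{itemize}
This is exactly $M_1(a,b)$.

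\textbf{Case $d(G)\in\mathcal{D}_2$.} Now $S'=\{p_1,p_2\}$ with $N_T(p_1)=N_T(p_2)$, and $s=a+2$. Take $V_1=S'$, with $V_2,V_3,V_4$ as before, so that $|V_4|=n-a-b-2$. The same bookkeeping gives the following rows.
\begin{itemize}
\item Rows of $V_1$: $(k+1,a,b,0)$.
\item Rows of $V_2$: $(2,k+a-1,0,0)$.
\item Rows of $V_3$: $(2,0,a+b,n-a-b-2)$, using $2+e_u+(|V_4|-t_u)=n-k$.
\item Rows of $V_4$: $(0,0,b,n-b)$.
\end{itemize}
This is $M_2(a,b)$.

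\textbf{Main obstacle.} The step needing most care is the row computation for $V_3$ and $V_4$. One has to confirm that every non-edge between $N_T(p)$ and $T\setminus N_T(p)$ is counted in $t_w$, so that the diagonal correction exactly cancels the missing off-diagonal entries and the row sums are independent of the individual vertex.

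A minor point is the degenerate case $a=0$, where $V_2$ is empty. The partition then has three parts, and $M_1(0,b)$ (or $M_2(0,b)$) has column $2$ zero off the diagonal. So its spectrum is that of the $3\times3$ quotient together with the extra eigenvalue $k-1$. This eigenvalue is far below $\rho(B_{G,p})>n-2k+1$ for large $n$, so the largest eigenvalue is still $\lambda_{0,b}$ (respectively $\mu_{0,b}$).
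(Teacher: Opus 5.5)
Your proposal is correct and follows essentially the paper's proof. It uses the same four-part equitable partition $S'$, $S\setminus S'$, $N_T(p)$, $T\setminus N_T(p)$ of $B_{G,p}$ with quotient $M_i(a,b)$, Lemma~\ref{QM}, and, for $a=0$, the same passage to the $3\times 3$ quotient $\widetilde M_i(b)$, the extra eigenvalue $k-1$ coming from the second column. Your row-sum bookkeeping via $t_w$ just spells out what the paper asserts in one line; the bound $\rho(B_{G,p})>k-1$ that you state without proof in the degenerate case follows, as in the paper, from $\rho(\widetilde M_i(b))\ge n-i$ via its lower-right $2\times 2$ block.
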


\begin{proof}
Let $d(G)\in\mathcal{D}_i$ with $i=s_1\in\{1,2\}$. By the definitions of $S'$ and $s_2$, the four sets
$$
S',\qquad S\setminus S',\qquad N_T(p),\qquad T\setminus N_T(p)
$$
have cardinalities $i,a,b,n-a-b-i$, respectively. By Lemma \ref{lemma1}, the graph $G[S]$ is complete and no vertex of $S\setminus S'$ has a neighbor in $T$. Moreover, if $i=2$, then the two vertices of $S'$ have the same neighborhood in $T$. By the definition of $B_{G,p}$, the ordered partition formed by these four sets is therefore equitable whenever $a>0$, whose quotient matrix is $M_i(a,b)$.

If $a=0$, then  the  set $S\setminus S'$ is empty and  the resulting quotient matrix is
$$
\widetilde M_i(b):=
\begin{bmatrix}
k+i-1 & b & 0\\
i & b & n-b-i\\
0 & b & n-b
\end{bmatrix},
$$
which is obtained from $M_i(0,b)$ by deleting its second row and column.  Notice that
$$
\begin{aligned}
\rho(\widetilde M_i(b))&\ge\rho\left(\begin{bmatrix}b&n-b-i\\b&n-b\end{bmatrix}\right) \ge\min\{n-i,n\}=n-i>k-1,
\end{aligned}
$$
for sufficiently large $n$. It follows  that $\rho(M_i(0,b))=\rho(\widetilde M_i(b))$.

Now applying Lemma \ref{QM}  we obtain $\rho(B_{G,p})=\rho(M_i(a,b))$, which is $\lambda_{a,b}$ for $i=1$ and $\mu_{a,b}$ for $i=2$.
\end{proof}

\begin{claim}\label{claim11}
If $n$ is sufficiently large and $b=O(1)$, then
\begin{align}
\lambda_{a,b}
&=n-\frac{b}{n}+\frac{b^2-ab}{n^2}+O\left(\frac{1}{n^3}\right),
 \label{eq26}\\
\lambda_{a,\Delta-b}
&=n-\frac{b}{n}+\frac{b^2-a(b-1)}{n^2}+O\left(\frac{1}{n^3}\right),
 \label{eq27}\\
\mu_{a,1}
&=n-\frac{2}{n}+\frac{2-2a}{n^2}+O\left(\frac{1}{n^3}\right).\label{eq28}
\end{align}
Moreover,
\begin{equation}\label{eqh31}
\lambda_{0,b}=\lambda_{0,\Delta-b},\qquad
\mu_{0,b}=\mu_{0,\Delta-1-b}
\end{equation}
whenever both sides are defined.
The constants in the error terms may depend on the fixed integer $k$.
\end{claim}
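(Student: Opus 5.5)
The plan is to work directly with the Perron eigenvector equations of the $4\times 4$ matrices rather than with the full characteristic polynomial. Let $\lambda=\lambda_{a,b}$ and let $(y_1,y_2,y_3,y_4)^T$ be a positive Perron vector of $M_1(a,b)$, normalized by $y_3=1$. The rows of $M_1(a,b)$ give
\[
(\lambda-k-a+1)y_2=y_1,\qquad (\lambda-k)y_1=ay_2+b,\qquad (\lambda-a-b)=y_1+(n-a-b-1)y_4,\qquad (\lambda-n+b)y_4=b.
\]
By Lemma \ref{QM} (or by the crude row-sum bound used in Claim \ref{claim10}), $\lambda\ge n-O(1)$. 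Hence all denominators are positive, and we may eliminate:
\[
y_4=\frac{b}{\lambda-n+b},\qquad y_1=\frac{b}{\lambda-k-a/(\lambda-k-a+1)}.
\]
This leaves one scalar equation in $\lambda$.

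For \eqref{eq26} I write $\lambda=n+t$ and multiply the third equation by $t+b$. After cancelling the terms $nb$ and $(a+b)b$, this gives
\[
nt+t^2-at=y_1(t+b)-b.
\]
A first pass gives $y_1=b/n+O(n^{-2})$ and $t=O(1/n)$. Feeding this back yields $nt=-b+at+b^2/n+O(n^{-2})$, and therefore
\[
t=-\frac bn+\frac{b^2-ab}{n^2}+O(n^{-3}).
\]
Formula \eqref{eq28} follows from the identical computation for $M_2(a,1)$: the only changes are a $2$ in place of the $1$'s in the first column and $n-a-b-2$ in row 3. Tracking these, the equation becomes $nt-at+t^2=y_1'(t+1)-2+O(\cdot)$ with $y_1'=2/n+O(n^{-2})$, which gives $-2/n+(2-2a)/n^2$.

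For \eqref{eq27} the parameter $b$ is replaced by $\Delta-b=n-k-b$, which is large, so I normalize differently. It is natural to take $y_4=1$, since now the small block is the fourth class, of size $n-a-(n-k-b)-1=k+b-a-1$. I would redo the elimination with $\lambda=n+t$. Now $(\lambda-n+b')y_4=b'y_3$ with $b'=n-k-b$ forces $y_3=1+(t)/b'$, and row 3 carries the large coefficient. The expansion should produce $-b/n$ at first order and $(b^2-a(b-1))/n^2$ at second order. I expect the bookkeeping here, in particular correctly collecting all $O(n^{-2})$ contributions coming from $y_1\approx b'/n\approx 1$ (which is no longer small), to be the main obstacle. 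I would carry it out by writing everything as a rational equation in $t$ with coefficients polynomial in $n$, and matching coefficients of $n^{-1}$ and $n^{-2}$.

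In each case I also need to confirm that the root found is the largest eigenvalue. The Perron root is the unique eigenvalue with a positive eigenvector, and the elimination above produces a positive vector for the root near $n$; alternatively, the other three eigenvalues are $O(1)$ by a trace and determinant comparison. For the symmetries \eqref{eqh31}, with $a=0$ Claim \ref{claim10} reduces to the $3\times3$ matrix $\widetilde M_i(b)$. I would compute $\det(xI-\widetilde M_i(b))$ explicitly as a cubic in $x$ whose coefficients are polynomials in $b$, and check that these coefficients are invariant under $b\mapsto \Delta-b$ for $i=1$ and under $b\mapsto \Delta-1-b$ for $i=2$. Since this is a finite polynomial identity, equality of the characteristic polynomials, and hence of the spectral radii, follows.
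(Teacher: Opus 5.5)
Your route is correct, but it is genuinely different from the paper's. The paper never uses eigenvectors. It writes $f_i=\det(\lambda I-M_i)$ in closed form; for the $\Delta-b$ case it substitutes $b=\Delta-r$, which collapses $f_1$ to $q[(\lambda-n)(\lambda-k)(\lambda-a)+r(\Delta-r)]-a[(\lambda-n)(\lambda-a)+\Delta-r]$. It then checks that $f_i=O(1)$ at the guessed values $\widehat\lambda_1,\widehat\lambda_2,\widehat\mu$, and turns this into an $O(n^{-3})$ error via $\partial_\lambda f_i\ge n^3/2$ on $[n-C,n+C]$ and the mean value theorem.

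You instead eliminate in the eigen-equations and bootstrap from $t=O(1)$ to $t=O(1/n)$ and then to the second-order term. This derives the coefficients rather than verifying a guess. Because you start from the Perron root itself, you need no monotonicity of $f_i$, so your closing step of checking that the root found is the largest eigenvalue is superfluous. The price is a separate normalization in each case. Justify it by nonvanishing rather than positivity, since irreducibility fails for $a=0$: for $\lambda$ near $n$, a zero in the normalizing coordinate propagates through the other three rows to $y=0$.

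Also, Lemma \ref{QM} does not bound $\rho(M_i)$ from below, because the $M_i$ are not symmetric. Use the principal-submatrix and row-sum bounds $n-i\le\lambda\le n+k-1$ instead; these give both halves of $t=O(1)$. For \eqref{eqh31} your plan is the paper's: the relevant cubics are $x(x-n)(x-k)+b(\Delta-b)$ and $x(x-n)(x-k-1)+2b(\Delta-1-b)$.

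The part you left unexecuted, \eqref{eq27}, does go through with your normalization. Take $y_4=1$, $b'=\Delta-b$, $\lambda=n+t$ and $q=\lambda-k-a+1$.
\begin{itemize}
\item Rows 4 and 3 give $y_3=1+t/b'$ and $y_1=1+t+(k+b-a+t)t/b'$.
\item Rows 1--2 give $y_1(\lambda-k-a/q)=b'+t$.
\item Multiply out and cancel $n-k+t$. The correction term of $y_1$, multiplied by $\lambda-k-a/q$, equals $(k+b-a)t+O(n^{-2})$, using $n/b'=1+O(n^{-1})$. This term is of order $1/n$ and combines with $-kt$.
\item Together with $a/q=a/n+O(n^{-2})$, this yields
\[
(n+b-a)\,t=-b+\frac{a}{n}+O(n^{-2}),
\]
once $t=O(1/n)$ has been obtained from $t=O(1)$ exactly as in your first case.
\end{itemize}
Hence $t=-b/n+(b^2-ab+a)/n^2+O(n^{-3})$, which is \eqref{eq27}. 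The bookkeeping you feared amounts to not discarding that single $(k+b-a)t$ term.
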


\begin{proof}
Let $f_i(a,b,\lambda)=\det(\lambda I-M_i(a,b))$. A direct calculation gives
\begin{align*}
f_1(0,b,\lambda)
&=(\lambda-k+1)\{\lambda(\lambda-n)(\lambda-k)+b(\Delta-b)\},\\
f_2(0,b,\lambda)
&=(\lambda-k+1)\{\lambda(\lambda-n)(\lambda-k-1)+2b(\Delta-1-b)\}.
\end{align*}
The  identities in \eqref{eqh31} are straightforward.

Put $q=\lambda-k-a+1$ and $D=\lambda-n+b$. A direct expansion gives
\begin{eqnarray}
f_1(a,b,\lambda)&=&((\lambda-k)q-a)((\lambda-n)(\lambda-a)+b)-bqD,\label{eqh32}\\
f_2(a,b,\lambda)&=&((\lambda-k-1)q-2a)((\lambda-n)(\lambda-a)+2b)-2bqD.\label{eqh33}
\end{eqnarray}
Setting  $b=\Delta-r$ in \eqref{eqh32} we get
\begin{align}\label{eqh34}
f_1(a,\Delta-r,\lambda)
=q\left[(\lambda-n)(\lambda-k)(\lambda-a)+r(\Delta-r)\right]
-a[(\lambda-n)(\lambda-a)+\Delta-r].
\end{align}
Let
$$
\widehat\lambda_1=n-\frac{b}{n}+\frac{b^2-ab}{n^2},\qquad
\widehat\lambda_2=n-\frac{b}{n}+\frac{b^2-a(b-1)}{n^2},\qquad
\widehat\mu=n-\frac{2}{n}+\frac{2-2a}{n^2}.
$$
Then  for bounded $a,b$, we have
\begin{align*}
f_1(a,b,\widehat\lambda_1)
&=(n^2+O(n))\left(\frac{b^2}{n}+O(n^{-2})\right)
-\left(b^2n+O(1)\right)=O(1),\\
f_1(a,\Delta-b,\widehat\lambda_2)
&=(n+O(1))(a+O(n^{-1}))-a(n+O(1))=O(1),\\
f_2(a,1,\widehat\mu)
&=(n^2+O(n))\left(\frac{2}{n}+O(n^{-2})\right)
-\left(2n+O(1)\right)=O(1).
\end{align*}
\indent For $i\in\{1,2\}$, the lower-right $2\times2$ principal submatrix of $M_i(a,b)$ and the row sums of $M_i(a,b)$ give
\begin{align*}
n-i
&\le \rho\left(
\begin{bmatrix}
a+b & n-a-b-i\\
b & n-b
\end{bmatrix}
\right)
\le \rho(M_i(a,b))\\
&\le \max\{k+i-1+a+b,k+a+i-1,n\}
\le n+k-1.
\end{align*}
Thus the three Perron roots and $\widehat\lambda_1$, $\widehat\lambda_2,$ $\widehat\mu$  all lie in a fixed interval $I_n=[n-C,n+C]$, where $C$ depends only on $k$.

For all sufficiently large $n$ and $\lambda\in I_n$, differentiating \eqref{eqh31} and \eqref{eqh32} with respect to $\lambda$
and noting that $\partial_\lambda q=\partial_\lambda D=1$, we have
\begin{align*}
\frac{\partial f_1}{\partial\lambda}
&=(q+\lambda-k)((\lambda-n)(\lambda-a)+b)+((\lambda-k)q-a)(2\lambda-n-a) -b(q+D)\\
&=n^3+O(n^2)\ge \frac12n^3,\\
\frac{\partial f_2}{\partial\lambda}
&=(q+\lambda-k-1)((\lambda-n)(\lambda-a)+2b)+((\lambda-k-1)q-2a)(2\lambda-n-a)-2b(q+D)\\
&=n^3+O(n^2)\ge \frac12n^3.
\end{align*}

Let $\xi_1,\xi_2,\xi_3$ lie between the relevant Perron root and $\widehat\lambda_1$, $\widehat\lambda_2$ and $\widehat\mu$, respectively. Applying the mean value theorem we obtain
\begin{align*}
|\lambda_{a,b}-\widehat\lambda_1|
&=\frac{|f_1(a,b,\widehat\lambda_1)|}
{|\partial_\lambda f_1(a,b,\xi_1)|}=O(n^{-3}),\\
|\lambda_{a,\Delta-r}-\widehat\lambda_2|
&=\frac{|f_1(a,\Delta-r,\widehat\lambda_2)|}
{|\partial_\lambda f_1(a,\Delta-r,\xi_2)|}=O(n^{-3}),\\
|\mu_{a,1}-\widehat\mu|
&=\frac{|f_2(a,1,\widehat\mu)|}
{|\partial_\lambda f_2(a,1,\xi_3)|}=O(n^{-3}).
\end{align*}
This proves (\ref{eq26})--(\ref{eq28}).
\end{proof}

Combining Claims \ref{claim10} and \ref{claim11}, we obtain the following estimates for the candidates in $\mathcal D$:
\begin{table}[H]
\centering
\caption{Asymptotic estimates for the quotient spectral radii of the candidates in $\mathcal D$.}
\label{tab:quotient-root-estimates}
\[
\begin{array}{c|c}
\text{degree sequence}&\rho(B_{G,p})\\ \hline
(\Delta^{n-s},\Delta-1,(s-1)^{s-1})
&n-\dfrac{s}{n}+\dfrac{2s-1}{n^2}+O(n^{-3})\\[3mm]
(\Delta^{n-1},\Delta-2)
&n-\dfrac{2}{n}+\dfrac{4}{n^2}+O(n^{-3})\\[3mm]
(\Delta^{n-s},s+1,(s-1)^{s-1})
&n-\dfrac{2}{n}+\dfrac{6-2s}{n^2}+O(n^{-3})\\[3mm]
(\Delta^{n-s},s,(s-1)^{s-1})
&n-\dfrac{1}{n}+\dfrac{2-s}{n^2}+O(n^{-3})\\[3mm]
(\Delta^{n-s},s^2,(s-1)^{s-2})
&n-\dfrac{2}{n}+\dfrac{6-2s}{n^2}+O(n^{-3})\\[3mm]
(\Delta^{n-2},(\Delta-1)^2)
&n-\dfrac{2}{n}+\dfrac{2}{n^2}+O(n^{-3})
\end{array}
\]
\end{table}
where the constants may depend on $k$.
\begin{claim}\label{claim12}
 Suppose $\delta\in \{1,2,\Delta-2,\Delta-1\}$. If $(n-1)\Delta+\delta$ is even, then there is a graph $G_\delta\in\mathcal{C}(n,\Delta)$ such that  $d(G_{\delta})=(\Delta^{n-1},\delta)$ and $\rho(G_{\delta})+k\ge \lambda_{0,\delta}$.
\end{claim}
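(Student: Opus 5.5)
The plan is to reduce the spectral inequality to Claim~\ref{claim10}, so that the real content of the claim is the existence of a suitable connected graph. Let $G$ be any graph of order $n$ with $d(G)=(\Delta^{n-1},\delta)$. Then $S=\{p\}$, $s=1$, $S'=\{p\}$ and $s_2=0$, so Claim~\ref{claim10} would give $\rho(B_{G,p})=\lambda_{0,\delta}$. However, Claim~\ref{claim10} was proved for $G\in\mathcal{G}(n,\Delta)$, and I will not use that hypothesis. So I re-verify equitability directly from the definition of $B_{G,p}$, using only $d(G)=(\Delta^{n-1},\delta)$.

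Put $A=N_T(p)$, so $|A|=\delta$, and $B=T\setminus A$, so $|B|=n-\delta-1$. For $u\in A$, let $\ell(u)$ be the number of non-neighbours of $u$ in $B$; this is exactly the number of pairs of $\mathcal L(G,p)$ containing $u$. In $B_{G,p}$ the row of $u$ then behaves as follows.
\begin{itemize}
\item Toward $B$: the row sum is $e_B(u)+\ell(u)=|B|=n-\delta-1$.
\item Toward $p$: the row sum is $1$.
\item Toward $A$: the total row sum of $u$ is preserved, namely $\Delta+k=n$, so the row sum toward $A$ is $n-1-(n-\delta-1)=\delta$.
\end{itemize}
Symmetrically, each $v\in B$ has row sum $\delta$ toward $A$, $0$ toward $p$, and $n-\delta$ toward $B$. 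The row of $p$ is $(k,\delta,0)$. Hence $\{\{p\},A,B\}$ is an equitable partition of $B_{G,p}$ with quotient matrix $\widetilde M_1(\delta)$. By Lemma~\ref{QM} and the argument at the end of Claim~\ref{claim10}, $\rho(B_{G,p})=\lambda_{0,\delta}$.

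Since $B_G-B_{G,p}$ is positive semidefinite, Lemma~\ref{tv} gives $\rho(G)+k=\rho(B_G)\ge\rho(B_{G,p})=\lambda_{0,\delta}$. This step is independent of the particular graph chosen.

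It remains to construct $G_\delta\in\mathcal C(n,\Delta)$ with $d(G_\delta)=(\Delta^{n-1},\delta)$. Any such graph is nonregular with maximum degree $\Delta$. It is connected by Lemma~\ref{connect}: $G[S]$ is a single vertex, $|S|=1\le k-1$, and $\partial(S,T)\ne\emptyset$ because $\delta\ge1$.

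For existence, make $p$ adjacent to a chosen $\delta$-set $A\subseteq T$. We then need a graph $H$ on the $n-1$ vertices of $T$ in which vertices of $A$ have degree $\Delta-1$ and vertices of $T\setminus A$ have degree $\Delta$. Passing to the complement within $T$, this amounts to a graph $\overline H$ on $n-1$ vertices with degree $k-1$ on $A$ and $k-2$ on $T\setminus A$. The degree sum is $\delta(k-1)+(n-1-\delta)(k-2)$, whose parity equals that of $(n-1)\Delta+\delta$, and this is even by hypothesis. Every degree is at most $k-1$, which is bounded while $n-1$ is large. For such sequences the Erdős–Gallai condition holds trivially:
\begin{itemize}
\item for $r\le k$, the left side is at most $r(k-1)\le r(r-1)+(n-1-r)\min\{\cdot\}$ once $n$ is large;
\item for $r>k$, the left side is at most $r(k-1)\le r(r-1)$.
\end{itemize}
So $\overline H$ exists, and $G_\delta$ is obtained by taking the complement of $\overline H$ inside $T$ and joining $p$ to $A$.

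Alternatively, for an explicit construction, take a $(k-2)$-regular circulant on $n-1$ vertices and add a perfect matching on $A$ when $\delta$ is even. When $\delta$ is odd, the parity hypothesis forces $(n-1)(k-2)$ to be odd, so the circulant should be adjusted by a small local modification. Citing Erdős–Gallai avoids this casework.

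The main obstacle is the verification that $\{\{p\},A,B\}$ is equitable for $B_{G,p}$ without extremality; the computation above shows that it follows purely from the degree sequence. After that, the remaining work is the routine existence argument.
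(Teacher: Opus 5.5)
Your argument is correct, and it takes a different route from the paper's.

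\textbf{The paper's route.} The paper builds explicit graphs and checks parities case by case. For $\delta\in\{\Delta-2,\Delta-1\}$ it joins a new vertex $u$ to $V_1$ in $H_1\vee H_2$. There $\{\{u\},V_1,V_2\}$ is already equitable for $B_{G_\delta}$ itself, which gives the equality $\rho(G_\delta)+k=\lambda_{0,\delta}$. For $\delta\in\{1,2\}$ it builds $G_1,G_2$ from $H_3\vee H_4$ and then applies the matrix operation at $u$. In each case it must verify that the required regular graphs $H_1,\dots,H_4$ exist.

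\textbf{Your route.} You prove that for every connected graph with degree sequence $(\Delta^{n-1},\delta)$, the partition $\{\{p\},N_T(p),T\setminus N_T(p)\}$ is equitable for $B_{G,p}$ with quotient $Q_\delta$. Your row-sum bookkeeping is right: every entry between $N_T(p)$ and $T\setminus N_T(p)$ becomes $1$, and the matrix operation preserves row sums. This is exactly the computation the paper carries out only for $\delta=\Delta-2$, in the first part of Claim~\ref{claim13}. It makes the inequality a property of the degree sequence alone, so existence reduces to graphicality. Your Erd\H{o}s--Gallai argument on the complement inside $T$ settles that; the parity check and both ranges of $r$ are fine, using that all degrees there are at least $k-2\ge 2$.

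\textbf{What each buys.} Your route is shorter and uniform in $\delta$: it works for every $1\le\delta\le\Delta-1$ of the right parity, and it avoids the casework. The paper's route produces concrete graphs, and its $G_2$ is reused in Claim~\ref{claim13}, where the exact count $|\mathcal L(G_2,u)|=2(k-1)$ yields the margin $(k-1)/n^3$. Your approach would cover that step too. For any graph with $d(G)=(\Delta^{n-1},2)$, each of the two vertices of $N_T(p)$ has at least $k-2$ nonneighbours in $T\setminus N_T(p)$. Hence $|\mathcal L(G,p)|\ge 2(k-2)>k-1$ for $k\ge 4$, which is still enough.

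Your circulant aside is too vague for odd $\delta$, but you do not rely on it.
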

\begin{proof}
For each admissible $\delta$, put
$$
Q_{\delta}:=
\begin{bmatrix}
k&\delta&0\\
1&\delta&n-\delta-1\\
0&\delta&n-\delta
\end{bmatrix}.
$$
This is the matrix $\widetilde M_1(\delta)$ in the proof of Claim \ref{claim10}, and the argument there gives $\rho(Q_{\delta})=\lambda_{0,\delta}$. We also use the fact that an $r$-regular graph of order $N$ exists whenever $0\le r\le N-1$ and $Nr$ is even.

First suppose that $\delta\in\{\Delta-2,\Delta-1\}$.  We assert that there exits a $(\delta-k)$-regular graph $H_1$ and a $(\Delta-\delta)$-regular graph $H_2$ such that $$|V(H_1)|=\delta \quad\text{and} \quad |V(H_2)|=n-\delta-1.$$ In fact,  If $\delta=\Delta-1$, the parity assumption makes $n\Delta-1$ even. Therefore, $n$ and $\Delta$ are both odd, which implies that $|V(H_1)|=\Delta-1$ and $|V(H_2)|=n-\Delta$ are both even. If $\delta=\Delta-2$, then $n\Delta$ is even. Then either $\Delta$ is even,  $|V(H_1)|$ is even, or
  $\Delta$ is odd, forcing   $n$   even, $k$   odd, and $\delta-k$  even. Moreover, $\Delta-\delta=2$.
  Thus $H_1$ and $H_2$ always exist.

  Denote $V_1=V(H_1)$ and $V_2=V(H_2)$. We construct a graph $G_\delta$ by adding a new vertex $u$ to $H_1\vee H_2$ and adding an edge between $u$ and every vertex of $V_1$.
Then
$$
d(u)=\delta,\qquad d(x)=1+(\delta-k)+(n-\delta-1)=\Delta,\qquad d(y)=\delta+(\Delta-\delta)=\Delta
$$
for $x\in V_1$ and $y\in V_2$. Hence $d(G_{\delta})=(\Delta^{n-1},\delta)$. Moreover, $G_\delta$ is connected, and $\{\{u\},V_1,V_2\}$ is an equitable partition for $B_{G_{\delta}}$, with quotient matrix $Q_{\delta}$. Therefore,
$$
\rho(G_{\delta})+k=\rho(Q_{\delta})=\lambda_{0,\delta}.
$$

Now suppose that $\delta\in\{1,2\}$. We assert that there exists a $(\Delta-k-\delta+1)$-regular graph $H_3$ and  a $\delta$-regular graph $H_4$ such that
$$|V(H_3)|=\Delta-\delta\quad \text{and}\quad |V(H_4)|=k-1.$$
In fact,
if $\delta=1$, the parity assumption implies that $n$ is even, and $\Delta$, $k$ are odd. Therefore, $|V(H_3)|=\Delta-1$ and $|V(H_4)|=k-1$ are even. If $\delta=2$, then $(n-1)\Delta$ is even. Therefore, either $\Delta$ and hence $|V(H_3)|$ are even, or $n$ is odd, $k$ is even, and the required degree $\Delta-k-1$ on $V(H_3)$ is even.  Hence, the required regular graph $H_3$ and $H_4$ always exist.

Denote  $V_3=V(H_3)$ and $V_4=V(H_4)$. For $\delta=1$, we construct $G_1$ by adding two new vertices $u,v$ and the edge $uv$ to $H_3\vee H_4$ and joining $v$ to every vertex of $V_3$; for $\delta=2$, we construct $G_2$ by adding three new vertices $u,v,w$ inducing a triangle to $H_3\vee H_4$ and joining both $v,w$ to every vertex of $V_3$.
  Then $G_\delta$ is connected with $d(G_{\delta})=(\Delta^{n-1},\delta)$. Taking the matrix operation with $p=u$, we have $\mathcal L(G_{\delta},u)=U\times V_4$, where $U=\{v\}$ for $\delta=1$ and $U=\{v,w\}$ for $\delta=2$. Hence, $\{\{u\},U,V_3\cup V_4\}$ is an equitable partition for $B_{G_{\delta},u}$, with quotient matrix $Q_{\delta}$. Since $B_{G_{\delta}}-B_{G_{\delta},u}$ is positive semidefinite, we have
\begin{equation*}
\rho(G_{\delta})+k=\rho(B_{G_{\delta}})\ge \rho(B_{G_{\delta},u})=\rho(Q_{\delta})=\lambda_{0,\delta}.\qedhere
\end{equation*}
\end{proof}

\begin{claim}\label{claim13}
Suppose that $n$ and $k$ have the same parity with $n$ sufficiently large. If
 $G$ is connected with $d(G)=(\Delta^{n-1},\Delta-2)$, then
$$
\rho(G)+k\le \lambda_{0,\Delta-2}+O(n^{-5}).
$$
Moreover, the graph $G_2$ constructed in Claim \ref{claim12} satisfies
$$
\rho(G_2)+k\ge \lambda_{0,2}+\frac{k-1}{n^3}.
$$
\end{claim}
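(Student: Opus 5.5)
The two assertions of Claim~\ref{claim13} go in opposite directions, so I would treat them separately. Both rest on the same identity: for any unit vector $Y$, the matrix operation of the Definition gives $Y^TB_GY=Y^TB_{G,p}Y+\sum_{(u,v)\in\mathcal L(G,p)}(y_u-y_v)^2$. For the upper bound this identity is applied with the Perron vector of $G$; for the lower bound it is applied with the Perron vector of $B_{G_2,u}$.

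\emph{Upper bound.} Let $d(G)=(\Delta^{n-1},\Delta-2)$ and $S=\{p\}$. Then $|N_T(p)|=\Delta-2$ and $|T\setminus N_T(p)|=k+1$. Hypotheses (i),(ii) of Lemma~\ref{lemma6} hold trivially, so \eqref{eq1} is available even though $G$ need not be extremal. The partition $\{\{p\},N_T(p),T\setminus N_T(p)\}$ is equitable for $B_{G,p}$ with quotient $\widetilde M_1(\Delta-2)=Q_{\Delta-2}$, so $\rho(B_{G,p})=\lambda_{0,\Delta-2}$, as in Claim~\ref{claim10}. Exactly as in Claim~\ref{claim9}, and using \eqref{eqhh28},
$$\rho(G)+k-\lambda_{0,\Delta-2}\le\sum_{(u,v)\in\mathcal L(G,p)}(x_u-x_v)^2\le(k^2-1)(M'-m')^2,$$
where $M',m'$ are the maximum and minimum of $X$ over $T$. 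It therefore suffices to prove $M'-m'=O(n^{-5/2})$. Lemma~\ref{lemma6} only gives $O(n^{-3/2})$, so I would bootstrap once.

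\begin{itemize}
\item For $a,c\in T$, equation \eqref{eq6} gives $(\rho+1)(x_a-x_c)=\sum_{w\notin N[c]}x_w-\sum_{w\notin N[a]}x_w$, and both sums have exactly $k-1$ terms.
\item Pair the terms of the two sums arbitrarily. Each paired difference is either a difference of two $T$-entries, hence $O(n^{-3/2})$ by \eqref{eq1}, or a difference $x_w-x_p$ with $w\in T$.
\item For the second type, the Perron equation at $p$ gives $\rho x_p\ge(\Delta-2)m'$. Hence $x_w-x_p\le M'-(1-O(1/n))m'=O(n^{-3/2})$, and also $x_p<x_w$ by Lemma~\ref{lemma3}.
\item Dividing by $\rho+1\sim n$ gives $M'-m'=O(n^{-5/2})$, so the error term is $O(n^{-5})$.
\end{itemize}

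\emph{Lower bound.} Let $Y$ be the unit Perron vector of $B_{G_2,u}$. By the equitable partition in Claim~\ref{claim12}, $Y$ is constant on each part, with values $y_1,y_2,y_3$ on $\{u\}$, $U$ and $V_3\cup V_4$, and $Q_2(y_1,y_2,y_3)^T=\lambda(y_1,y_2,y_3)^T$ with $\lambda=\lambda_{0,2}$. By Lemma~\ref{tv} and the identity above,
$$\rho(G_2)+k\ge Y^TB_{G_2}Y=\lambda+|\mathcal L(G_2,u)|(y_2-y_3)^2,\qquad |\mathcal L(G_2,u)|=|U\times V_4|=2(k-1).$$
It remains to show $(y_2-y_3)^2\ge(1-o(1))/n^3$, which is the main computation.

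\begin{itemize}
\item The third row of $Q_2$ gives $(\lambda-n+2)y_3=2y_2$.
\item By \eqref{eq26}, $\lambda=n-2/n+O(n^{-2})$. Hence $y_2=(1-1/n+O(n^{-2}))y_3$, so $y_3-y_2=(1+o(1))y_3/n$.
\item The first row gives $y_1=2y_2/(\lambda-k)=O(y_3/n)$. Since $V_3\cup V_4$ contains $n-3$ vertices, normalization forces $y_3^2=(1+o(1))/n$.
\item Therefore $2(k-1)(y_2-y_3)^2=(2+o(1))(k-1)/n^3\ge(k-1)/n^3$ for large $n$.
\end{itemize}

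I expect the main obstacle to be the first part: establishing the sharper estimate $M'-m'=O(n^{-5/2})$, and in particular controlling the terms where the special vertex $p$ appears among the nonneighbors. The lower bound is a direct three-term computation whose only delicate point is confirming that the $1/n$ correction in $y_2/y_3$ really has coefficient $1$.
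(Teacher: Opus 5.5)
Your proposal is correct and follows essentially the paper's own route. For the upper bound, it uses the equitable quotient $Q_{\Delta-2}$ for $B_{G,p}$ and a single bootstrap of Lemma~\ref{lemma6} through the Perron equations at $p$ and at pairs of vertices of $T$, giving $M-m=O(n^{-5/2})$; for the lower bound, it uses the three-part quotient computation for $G_2$, giving $(z_1-z_2)^2\sim n^{-3}$. One small fix: Lemma~\ref{lemma3} is stated only for extremal graphs, while here $G$ is an arbitrary connected graph with this degree sequence, so do not cite it for $x_p<x_w$. Instead, note that $\rho\ge \Delta-2/n>\Delta-2$, so $\rho x_p\le(\Delta-2)M$ gives $x_p<M$ and hence $x_w-x_p>-(M-m)$, which is all the bootstrap needs.
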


\begin{proof}
By Claim \ref{claim11}, $\lambda_{0,\Delta-2}=\lambda_{0,2}$; denote this common value by $\lambda_0$. Let $p$ be the vertex of degree $\Delta-2$ in $G$.
Put $T=V(G)\setminus\{p\}$, $U=N_T(p)$, and $W=T\setminus U$.
Thus $|U|=\Delta-2$ and $|W|=k+1$.

By the definition of the matrix operation, each entry of $B_{G,p}$ between $U$ and $W$ equals one. Denote by $B_{G,p}[R]$ the principal submatrix of $B_{G,p}$ with rows and columns indexed by $R$.
If $v\in U$ has $\ell_v$ nonneighbors in $W$, then the  diagonal entry of $B_{G,p}$ corresponding to $v$ is $k-\ell_v$. The  row sum of $B_{G,p}[U]$ corresponding to $v$ is
$$
k-\ell_v+d_{G[U]}(v)=k-\ell_v+\Delta-1-(|W|-\ell_v)=\Delta-2.
$$
If $v\in W$ has $r_v$ nonneighbors in $U$, then the row sum of $B_{G,p}[W]$ corresponding to $v$ is
$$
k-r_v+d_{G[W]}(v)=k-r_v+\Delta-(|U|-r_v)=n-\Delta+2.
$$
Hence, $\{\{p\},U,W\}$ is an equitable partition for $B_{G,p}$, with quotient matrix $Q_{\Delta-2}$ from Claim \ref{claim12}, which leads to
$$
\rho(B_{G,p})=\lambda_{0,\Delta-2}=\lambda_0.
$$

Recall that $X=(x_v)_{v\in V(G)}$ is the Perron vector of $G$. Set
$$
\rho=\rho(G),\qquad M=\max_{v\in T}x_v,\qquad m=\min_{v\in T}x_v.
$$
Since $S_G=\{p\}$ here,  the hypotheses of Lemma \ref{lemma6} are automatic. Moreover,
$$
\Delta-\frac{2}{n}=\frac{2e(G)}{n}\le\rho\le\Delta,
$$
implies $\rho=n+O(1)$. Applying Lemma \ref{lemma6} we get
$$
M-m=O(n^{-3/2}),\qquad m,M=O(n^{-1/2}).
$$
Consider the Perron equation at $p$, we have
$$
\frac{\Delta-2}{\rho}m\le x_p=\frac{1}{\rho}\sum_{w\in U}x_w\le\frac{\Delta-2}{\rho}M.
$$
Since $(\Delta-2)/\rho=1+O(n^{-1})$, it follows that
$$
x_w=m+O(n^{-3/2})\quad\text{for all}\quad  w\in V(G).
$$
Now subtracting the Perron equations at $u$ and $v$, we have
$$
(\rho+1)(x_u-x_v)=\sum_{w\in V(G)\setminus N_G[v]}x_w-\sum_{w\in V(G)\setminus N_G[u]}x_w=O(n^{-3/2})
$$
for all $u,v\in T$.
Consequently,
$$
M-m=O(n^{-5/2}).
$$

Similarly as in the proof of Claim \ref{claim9}, we have $|\mathcal L(G,p)|\le k^2-1$.
Notice that the vector $X$ is also the Perron vector of $B_G$. Since $B_G-B_{G,p}$ is positive semidefinite, by Lemma \ref{tv} and the Rayleigh quotient we have
\begin{align*}
0\le \rho(G)+k-\lambda_0
&=\rho(B_G)-\rho(B_{G,p})\\
&\le X^T(B_G-B_{G,p})X\\
&=\sum_{(u,v)\in\mathcal L(G,p)}(x_u-x_v)^2
=O(n^{-5}).
\end{align*}
This proves the first assertion.

For $G_2$, let $Z$ be the  Perron vector of $B_{G_2,u}$. Recall that $\{\{u\}, \{v,w\}, V(G_2)\setminus\{u,v,w\}\}$ is an equitable partition. We may assume   the entries of $Z$  corresponding to vertices from $\{u\}$, $\{v,w\}$, and $V(G_2)\setminus\{u,v,w\}$ to be $z_0,z_1,z_2$, respectively. By the first row and the last row of the quotient eigenvalue equation $Q_2(z_0,z_1,z_2)^T=\lambda_0(z_0,z_1,z_2)^T$, we have
$$(\lambda_0-k)z_0=2z_1\quad\text{and}\quad
z_1-z_2=\frac{\lambda_0-n}{2}z_2.
$$
By Claim \ref{claim11}, we get
$$
z_1=\left(1-\frac1n+O(n^{-2})\right)z_2,\qquad z_0=O(n^{-1})z_2.
$$
Since $Z$ is unit, then
$$
1=z_0^2+2z_1^2+(n-3)z_2^2,
$$
which leads to $z_2^2=n^{-1}+O(n^{-2})$. Since $(\lambda_0-n)/2=-n^{-1}+O(n^{-2})$, it follows that
$$
(z_1-z_2)^2=\frac1{n^3}+O(n^{-4}).
$$
Recall that $B_{G_2}-B_{G_2,u}$ is the sum of exactly $2(k-1)$ matrices of the form $(e_a-e_b)(e_a-e_b)^T$. The Rayleigh quotient gives
\[
\rho(G_2)+k\ge Z^TB_{G_2}Z=\lambda_0+2(k-1)(z_1-z_2)^2=\lambda_0+\frac{2(k-1)}{n^3}+O(n^{-4}).
\]
Therefore, for sufficiently large $n$, $\rho(G_2)+k\ge\lambda_0+(k-1)/n^3$, which proves the second assertion.
\end{proof}

We now complete the proof of Theorem \ref{th1}.   By Lemma \ref{lemma3}, we have $d(G)\in \mathcal{D}$.
For each candidate degree sequence  $d$ in $\mathcal D$, choose a vertex $p\in S'$.
Since $s\le k-1$ and $\rho(G)>n-2k+1$, by Claim \ref{claim9} we have
\begin{equation}\label{eq29}
\rho(G)+k=\rho(B_{G,p})+O(n^{-3}).
\end{equation}

Recall the estimates in Table \ref{tab:quotient-root-estimates}, obtained from Claims \ref{claim10}
and \ref{claim11}.  By Claim \ref{claim11},  if
\[
d(G)\notin\{(\Delta^{n-1},1),(\Delta^{n-1},\Delta-1)\},
\]
then
\begin{equation}\label{eq:root-gap-one}
\rho(B_{G,p})\le\lambda_{0,1}-\frac1{n^2}+O(n^{-3});
\end{equation}
 if
\[
d(G)\notin\{(\Delta^{n-1},\Delta-1),(\Delta^{n-1},2),(\Delta^{n-1},\Delta-2)\}\cup\{(\Delta^{n-s},s,(s-1)^{s-1}):1\le s<k\},
\]
then
\begin{equation}\label{eq:root-gap-two}
\rho(B_{G,p})\le\lambda_{0,2}-\frac1{n^2}+O(n^{-3}).
\end{equation}

Suppose first that $\Delta$ is odd. By Claim \ref{claim11}, $\lambda_{0,1}=\lambda_{0,\Delta-1}$.
Notice that
\[
(n-1)\Delta+1\equiv n\pmod 2,\qquad (n-1)\Delta+(\Delta-1)=n\Delta-1\equiv n-1\pmod 2.
\]
There is a unique $\delta\in\{1,\Delta-1\}$ such that
$(n-1)\Delta+\delta$ is even. By Claim \ref{claim12}, there exists a graph $G_\delta\in \mathcal{C}(n,\Delta)$ with minimum degree $\delta$ such that $\rho(G_\delta)+k\ge\lambda_{0,\delta}=\lambda_{0,1}$. If
$d(G)\notin\{(\Delta^{n-1},1),(\Delta^{n-1},\Delta-1)\}$, then \eqref{eq29} and
\eqref{eq:root-gap-one} give $\rho(G)+k<\lambda_{0,1}\le\rho(G_\delta)+k$, which contradicts the
extremality of $G$.

Since $\Delta=n-k$ is odd, it follows that
\[
d(G)=
\begin{cases}
(\Delta^{n-1},1),&\text{if $n$ is even and $k$ is odd};\\
(\Delta^{n-1},\Delta-1),&\text{if $n$ is odd and $k$ is even}.
\end{cases}
\]

It remains to consider the case in which $\Delta$ is even, equivalently, $n$ and $k$ have the same parity.
The degree sums of $(\Delta^{n-1},1)$, $(\Delta^{n-1},\Delta-1)$, and
$(\Delta^{n-s},s,(s-1)^{s-1})$ for $1\le s<k$ are odd, since
\[
(n-1)\Delta+1\equiv(n-1)\Delta+(\Delta-1)\equiv(n-s)\Delta+s+(s-1)^2\equiv1\pmod 2.
\]
Hence none of these sequences is graphical. Therefore, \eqref{eq:root-gap-two} applies whenever
\[
d(G)\notin\{(\Delta^{n-1},2),(\Delta^{n-1},\Delta-2)\}.
\]

For any such $d(G)$, \eqref{eq29} and \eqref{eq:root-gap-two} give
$\rho(G)+k<\lambda_{0,2}$ for sufficiently large $n$. On the other hand, by Claim \ref{claim12}, there is a graph $G'\in \mathcal{C}(n,\Delta)$ such that $\rho(G')+k\ge\lambda_{0,2}$, which contradicts the extremality
of $G$. Consequently,
\[
d(G)\in\{(\Delta^{n-1},2),(\Delta^{n-1},\Delta-2)\}.
\]
By Claim \ref{claim11}, we have
$\lambda_{0,2}=\lambda_{0,\Delta-2}$.
If $d(G)=(\Delta^{n-1},\Delta-2)$, then by Claim \ref{claim13}, for sufficiently large $n$ we have
\begin{align*}
\rho(G)+k&\le\lambda_{0,\Delta-2}+O(n^{-5})=\lambda_{0,2}+O(n^{-5})\\
&<\lambda_{0,2}+\frac{k-1}{n^3}\le\rho(G')+k,
\end{align*}
which contradicts the extremality of $G$. Hence, we have
\[
d(G)=(\Delta^{n-1},2).
\]
This completes the proof.
\section*{Declaration on the use of AI}
A complete pre-revision version of this manuscript, including all mathematical results and their original proofs, was prepared by the authors without the use of any AI tools. During subsequent revisions, OpenAI's ChatGPT-5.6 Sol was used to assist with English-language editing of the whole paper, and improving the presentation of the proof of Claim 13. All AI-assisted revisions were independently examined, verified, and edited by the authors. The authors take full responsibility for the final content of the manuscript.

\enlargethispage{2\baselineskip}

\end{document}